\numberwithin{equation}{section}  
\newcommand{\beq}{\begin{equation}} 
\newcommand{\eeq}{\end{equation}} 
\newcommand{\bea}{\begin{aligned}}
\newcommand{\eea}{\end{aligned}}
\newcommand{\bdm}{\begin{displaymath}}
\newcommand{\edm}{\end{displaymath}}
\newcommand{\barr}{\begin{array}}
\newcommand{\earr}{\end{array}}
\newcommand{\ben}{\begin{enumerate}}
\newcommand{\een}{\end{enumerate}}
\newcommand{\bde}{\begin{description}}
\newcommand{\ede}{\end{description}}
\newtheorem{teor}{Theorem}[section]
\newtheorem{prop}[teor]{Proposition}
\newtheorem{lem}[teor]{Lemma}
\newtheorem{cor}[teor]{Corollary}
\newtheorem{rem}[teor]{Remark}
\newcommand{\R}{\mathbb{R}}
\newcommand{\1}{{\mathbbm 1}}
\newcommand{\N}{\mathbb{N}}
\newcommand{\PP}{\mathbb{P}}
\newcommand{\E}{{\mathbb{E}}}
\newcommand{\defi}{\equiv} %    {\stackrel{\text{def}}{=}}
\newcommand{\law}{\stackrel{\text{law}}{=}}
\newcommand{\de}{\delta}
\newcommand{\dd}{\text{d}}
\newcommand{\ee}{\text{e}}
\newcommand{\w}{\omega}
\newcommand{\vare}{\varepsilon}
\begin{document}
\title[The extremal process of BBM]{The Extremal Process of Branching Brownian Motion. }
\author[L.-P. Arguin]{Louis-Pierre  Arguin}            
 \address{L.-P. Arguin\\ Courant Institute of Mathematical Sciences \\
New York University \\
251 Mercer St. New York, NY 10012}
\email{arguin@math.nyu.edu}
\author[A. Bovier]{Anton Bovier}
\address{A. Bovier\\Institut f\"ur Angewandte Mathematik\\Rheinische
   Friedrich-Wilhelms-Uni\-ver\-si\-t\"at Bonn\\Endenicher Allee 60\\ 53115
   Bonn,Germany}
\email{bovier@uni-bonn.de}

\author[N. Kistler]{Nicola Kistler}
\address{N. Kistler\\Institut f\"ur Angewandte Mathematik\\Rheinische
   Friedrich-Wilhelms-Uni\-ver\-si\-t\"at Bonn\\Endenicher Allee 60\\ 53115
   Bonn,
Germany}
\email{nkistler@uni-bonn.de}

\subjclass[2000]{60J80, 60G70, 82B44} \keywords{Branching Brownian motion,
 extreme value theory, extremal process, traveling waves}

\thanks{L.-P. Arguin is supported by the NSF grant DMS-0604869. A. Bovier is 
partially supported through the German Research Council in the SFB 611 and
the Hausdorff Center for Mathematics. N. Kistler is partially supported by the Hausdorff Center for Mathematics. Part of this work has been carried out during the {\it Junior Trimester Program Stochastics} at the Hausdorff Center in Bonn, and during a visit by the third named author to the Courant Institute at NYU: hospitality and financial support of both institutions are gratefully acknowledged.}

 \date{\today}

\begin{abstract} 
We prove that the extremal process of branching Brownian motion, in the limit of large times, converges weakly to a cluster point process. 
The limiting process is a (randomly shifted) Poisson cluster process, where the positions of the clusters is a Poisson process with exponential density.
The law of the individual clusters is characterized as branching Brownian motions conditioned to perform "unusually large displacements", and its existence is proved.
The proof combines three main ingredients.
 First, the results of Bramson on the convergence of solutions of the Kolmogorov-Petrovsky-Piscounov equation with general initial conditions to standing waves. 
 Second, the integral representations of such waves as first obtained by Lalley and Sellke in the case of Heaviside initial conditions. 
 Third, a proper identification of the tail of the extremal process with an auxiliary process, which fully captures the large time asymptotics of the extremal process. 
 The analysis through the auxiliary process is a rigorous formulation of the {\it cavity method} developed in the study of mean field spin glasses. 
\end{abstract}

\maketitle
\tableofcontents

\section{Introduction} 
Branching Brownian Motion (BBM) is a continuous-time Markov branching process which plays an important role in the theory of partial differential equations \cite{aronson_weinberger,aronson_weinberger_two, kpp, mckean}, in the theory of disordered systems \cite{ BovierKurkova_II, derrida_spohn}, and in biology \cite{fisher}. It is constructed as follows. 

Start with a single particle which performs standard Brownian Motion $x(t)$ 
with $x(0)=0$, which it continues for an exponential holding time $T$ independent of $x$, with $\PP\left[ T > t\right] = \ee^{-t}$. At time $T$, the particle splits independently of $x$ and $T$ into $k$ offsprings with probability $p_k$, where $\sum_{k=1}^\infty p_k = 1$, $\sum_{k=1}^\infty k p_k = 2$, and $K \defi \sum_{k} k(k-1) p_k < \infty$. These particles continue along independent Brownian paths starting at $x(T)$, and are subject to the same splitting rule, with the effect that the resulting tree $\mathfrak X$ contains, after an elapsed time $t>0$, $n(t)$ particles located at $x_1(t), \dots, x_{n(t)}(t)$, with $n(t)$ being the random number of particles generated up to that time (it holds that $\E n(t)=e^t$). 

The link between BBM and partial differential equations is provided by the following observation due to McKean \cite{mckean}: if one denotes by
\beq \label{bbm_repr}
u(t, x) \defi \PP\left[ \max_{1\leq k \leq n(t)} x_k(t) \leq x \right]
\eeq
the law of the maximal displacement, a renewal argument shows that $u(t,x)$ solves
the Kolmogorov-Petrovsky-Piscounov or Fisher [F-KPP] equation, 
\beq \bea \label{kpp_equation}
& u_t = \frac{1}{2} u_{xx} + \sum_{k=1}^\infty p_k u^k -u, \\
& u(0, x)= 
\begin{cases}
1, \; \text{if}\;  x\geq 0,\\
0, \, \text{if}\; x < 0. 
\end{cases}
\eea \eeq 
The F-KPP equation admits traveling waves: there exists a unique solution satisfying 
\beq \label{travelling_one}
u\big(t, m(t)+ x \big) \to \omega(x) \qquad \text{uniformly in}\;  x\; \text{as} \; t\to \infty,
\eeq
with the centering term given by
\beq \label{centering_kpp}
m(t) = \sqrt{2} t - \frac{3}{2 \sqrt{2}} \log t, 
\eeq
and $\w(x)$ the distribution function which solves the o.d.e. 
\beq \label{wave_pde}
\frac{1}{2} \omega_{xx} + \sqrt{2} \omega_x + \omega^2 - \omega = 0.
\eeq
If one excludes the trivial cases, solutions to \eqref{wave_pde} are unique up to translations: this will play a crucial role in our considerations. \\

Lalley and Sellke \cite{lalley_sellke} provided a characterization of the limiting law of the maximal displacement in terms of a {\it random shift} of the Gumbel distribution. 
More precisely, denoting by 
\begin{equation}
\label{eqn: deriv}
Z(t) \defi \sum_{k=1}^{n(t)} \left( \sqrt{2}t -x_k(t) \right) \exp-\sqrt{2}\left( \sqrt{2}t -x_k(t) \right),
\end{equation}
the so-called {\it derivative martingale}, Lalley and Sellke proved that $Z(t)$ converges almost surely to a strictly positive 
random variable $Z$, and established the integral representation 
\beq \label{gumbel_like}
\omega(x) = \E\left[ \exp\left(- C Z \ee^{-\sqrt{2}x} \right)\right],
\eeq
for some $C>0$. 

It is  also  known (see e.g. Bramson \cite{bramson} and Harris \cite{harris})
 that
\beq \label{to_the_right}
\lim_{x\to\infty}\frac{1-\omega(x)}{x \ee^{-\sqrt{2} x}}=C. 
\eeq
 (The reason why the
 two constants $C$ in \eqref{gumbel_like} and \eqref{to_the_right} are equal is
 apparent in \cite{lalley_sellke}, and \cite{abk_poissonian}). \\

Despite the precise information on the maximal  displacement of BBM, an understanding of the full statistics of the largest particles
is still lacking. The statistics of such particles are fully encoded in the extremal process, the random measure:
\beq \label{extremal_process_def}
{\mathcal E}_t \defi \sum_{k\leq n(t)} \delta_{x_k(t) -m(t)}.
\eeq
Few papers have addressed so far the large time limit of the extremal process of branching Brownian motion.

On the physical literature side, we mention the contributions by Brunet and Derrida \cite{brunet_derrida_first, brunet_derrida_second}, 
who reduce the problem of the statistical properties of particles "at the edge" of BBM to that of identifying the finer properties of the delay of travelling waves. 
(Here and below, "edge" stands for the set of particles which are at distances of order one from the maximum).

On the mathematical side, properties of the large time limit of the extremal process have been established in two papers of ours \cite{abk, abk_poissonian}. In a first paper we obtained a precise description of the {\it paths} of extremal particles which in turn imply a somewhat surprising restriction of the correlations of particles at the edge of BBM.
These results were instrumental in our second paper on the subject where we proved that a certain process obtained by a correlation-dependent thinning of the extremal particles converges to a random shift of a Poisson Point Process (PPP) with exponential 
density. 

It is the purpose of this paper to complete this picture and to provide an explicit construction of the extremal process of branching Brownian motion in 
the limit of large times. 
We prove that the limit is a randomly shifted Poisson cluster process.
Up to a realization-dependent shift, this point process
corresponds to the superposition of independent point processes or {\it clusters}. The maxima of these point processes, or {\it cluster-extrema}, form a Poisson point process with exponential density.
Relative to their maximum, the laws of the individual clusters are identical.
The law of the clusters coincides with that of a branching Brownian motion conditioned to perform "unusually high jumps". The precise statement is given in Section \ref{main_results}.\\

Understanding the extremal process of BBM is a longstanding problem of
fundamental interest. Most results concerning extremal processes of 
correlated random variables concern criteria that show that it behaves as if
there were no correlations \cite{leadbetter}. Bramson's result shows that this cannot be the 
case for BBM. A class of models where a more complex structure of 
\emph{Poisson 
cascades} was shown to emerge are the \emph{generalized random energy models}
of Derrida \cite{derrida_grem,BovierKurkova}. These models, however, have a 
rather simple hierarchical structure involving a finite number of levels only
which greatly simplifies the analysis, which cannot be carried over to models
with infinite levels of branching such as BBM or the \emph{continuous random 
energy models} studied in \cite{BovierKurkova_II}.  BBM is a case right at the 
borderline where correlations just start to effect the extremes and the
structure of the 
extremal process. Our results thus allow to peek into the 
world beyond the simple Poisson structures and hopefully open the gate 
towards the rigorous understanding of complex extremal structures. 
Mathematically, BBM offers a spectacular interplay between probability
and non-linear pdes, as was noted already by McKean. We 
will heavily rely on this dual way to attack and understand this problem. \\

The remainder of this paper is organised as follows. In Section 2 we state
our main results, the heuristics behind them, and we indicate the major 
steps in the proof. In Section 3 we give the details of the proofs. 

\section{Main result} \label{main_results}
We first recall from \cite{kallenberg} the standard infrastructure for the study of point process. Let $\mathcal{M}$ be the space of Radon measure on $\R$. Elements of $\mathcal{M}$ are in correspondence with the positive linear functionals
on $\mathcal{C}_c(\R)$, the space of continuous functions on $\R$ with compact support. In particular, any element of $\mathcal{M}$ is locally finite.
The space $\mathcal{M}$ is endowed with the vague topology (or 
 weak-$\star$-topology), that is, $\mu_n\to \mu$ in $\mathcal{M}$ if and only if 
for any $\phi\in \mathcal{C}_c(\R)$, $\int \phi d\mu_n\to \int \phi d\mu$. The law of a random element $\Xi$ of $\mathcal{M}$, or random measure, is determined
by the collection of real random variables $\int \phi d\Xi$, $\phi \in \mathcal{C}_c(\R)$. A sequence $(\Xi_n)$ of random elements of $\mathcal{M}$ is said
to converge to $\Xi$ if and only if for each $\phi \in \mathcal{C}_c(\R)$,  the random variables $\int \phi d\Xi_n$ converges in the weak sense to $\int \phi d\Xi$.
A point process is a random measure that is integer-valued almost surely. It is a standard fact that point processes are closed in the set of random elements of
$\mathcal{M}$. 

The limiting point process of BBM is constructed as follows.
Let $Z$ be the limiting derivative martingale.
Conditionally on $Z$, we consider the Poisson point process (PPP) of density $C Z \sqrt{2}\ee^{-\sqrt{2}x} \dd x$:
\beq \label{ppp_expo}
P_Z\equiv \sum_{i\in \N}\delta_{p_i} \defi \text{PPP}\left(C Z \sqrt{2}\ee^{-\sqrt{2}x} \dd x \right), 
\eeq
with $C$ as in \eqref{gumbel_like}.
Now let $\{x_k(t)\}_{k\leq n(t)}$ be a BBM of length $t$. 
Consider the point process of the gaps $\sum_k\delta_{x_k(t)-\max_j x_j(t)}$ conditioned on the event $\{\max_j x_j(t)- \sqrt{2}t >0\}$. 
Remark that, in view of \eqref{centering_kpp}, the probability that the maximum of BBM shifted by $-\sqrt{2}t$ does not drift to $-\infty$ is vanishing in the large time limit.
In this sense, the BBM is conditioned to perform "unusually large displacements". 
It will follow from the proof that the law of this process exists in the limit. 
Write $\mathcal D =\sum_j \delta_{\Delta_j}$ for a point process with this law and consider independent, identically distributed (iid) copies $(\mathcal D^{(i)})_{i\in\N}$.
The main result states that the extremal process of BBM as a point process converges as follows:
\begin{teor}[Main Theorem] Let $P_Z$ and $\mathcal D^{(i)} = \{\Delta_j^{(i)}\}_{j\in \N}$ be defined as above. 
Then the family of point processes $\mathcal {E}_t$, defined in \eqref{extremal_process_def}, converges in distribution to a point process, $\mathcal E$, 
given by
\beq\label{mt.1}
\mathcal E\defi\lim_{t\to \infty} {\mathcal E}_t \law \sum_{i,j}\delta_{p_i + \Delta_j^{(i)}}.
\eeq
\label{teor main}
\end{teor}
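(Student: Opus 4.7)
Vague convergence of point processes on $\R$ is characterised by convergence of Laplace functionals, so the task reduces to showing that for every nonnegative $\phi\in\mathcal C_c(\R)$,
\beq\label{plan_target}
\lim_{t\to\ii}\E\bigl[\ee^{-\int\phi\,\dd\mathcal E_t}\bigr]=\E\bigl[\ee^{-\int\phi\,\dd\mathcal E}\bigr].
\eeq
Applying Campbell's formula to the conditional PPP $P_Z$ together with the independence of the clusters $\{\mathcal D^{(i)}\}$ rewrites the right-hand side of \eqref{plan_target} as $\E\bigl[\exp(-C(\phi)Z)\bigr]$, with
\beq\label{plan_constant}
C(\phi)\defi C\sqrt{2}\int_\R\bigl(1-\E[\ee^{-\int\phi(y+\cdot)\,\dd\mathcal D}]\bigr)\ee^{-\sqrt{2}y}\,\dd y.
\eeq
The strategy splits into two tasks: \emph{(i)} obtain a Gumbel-type limit of the left-hand side of \eqref{plan_target} via F-KPP wave asymptotics, and \emph{(ii)} construct the cluster law $\mathcal D$ through the auxiliary-process/cavity decomposition and verify that the resulting constant matches \eqref{plan_constant}.

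For task \emph{(i)}, set $f\defi 1-\ee^{-\phi}\in[0,1]$, which has compact support, and define
\beq
u(t,x)\defi\E\!\Bigl[\prod_{k\leq n(t)}\ee^{-\phi(x_k(t)-x)}\Bigr],
\eeq
so that $u(t,m(t))=\E\bigl[\ee^{-\int\phi\,\dd\mathcal E_t}\bigr]$. McKean's renewal argument behind \eqref{bbm_repr}--\eqref{kpp_equation} applies verbatim and shows that $u$ solves the F-KPP equation \eqref{kpp_equation} with initial datum $u(0,y)=\ee^{-\phi(-y)}$; equivalently, $1-u$ solves F-KPP with the compactly supported initial datum $y\mapsto f(-y)$. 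By Bramson's theorem \cite{bramson}, the solution centred by $m(t)$ converges to a traveling-wave profile, and the uniqueness of non-trivial waves solving \eqref{wave_pde} up to translation forces $u(t,m(t)+x)\to\w(x-\la(\phi))$ uniformly in $x$ for some $\la(\phi)\in\R$. The Lalley--Sellke integral representation \eqref{gumbel_like} then delivers
\beq\label{plan_LHS_limit}
\lim_{t\to\ii}\E\bigl[\ee^{-\int\phi\,\dd\mathcal E_t}\bigr]=\E\bigl[\exp\bigl(-C\ee^{\sqrt{2}\la(\phi)}\,Z\bigr)\bigr].
\eeq

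Task \emph{(ii)} is where the cavity method is deployed. Split $t=r+s$ with $r,s$ large; by the Markov branching property, $\mathcal E_t$ is, conditional on $\{x_k(r)\}_{k\leq n(r)}$, the superposition of $n(r)$ independent translates of the extremal process of length $s$, each shifted by $x_k(r)-m(t)$. The results of \cite{abk,abk_poissonian} show that, as $r\to\ii$, the ``contributing ancestors'' -- those $x_k(r)$ at distance $O(1)$ from $\sqrt{2}\,r$ -- form, after the shift $x_k(r)-\sqrt{2}\,r$, the PPP $P_Z$ of intensity $CZ\sqrt{2}\ee^{-\sqrt{2}y}\dd y$. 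Each contributing sub-tree must then perform a displacement of $\sqrt{2}\,s+O(1)$ over length $s$, an unusually large fluctuation, and its extremal process recentred at its own maximum has, by stationarity and tightness, a weak limit as $s\to\ii$ that serves as the definition of $\mathcal D$. Letting $t\to\ii$ first and then $r,s\to\ii$ assembles these pieces into a point process of the form $\sum_{i,j}\delta_{p_i+\Delta_j^{(i)}}$; matching its Laplace functional with \eqref{plan_LHS_limit} enforces $C(\phi)=C\ee^{\sqrt{2}\la(\phi)}$ and closes the proof.

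The hardest step is completing task \emph{(ii)} rigorously, that is, constructing $\mathcal D$ and controlling the double limit. Existence requires \emph{tightness} of the conditional laws $\mathrm{Law}\bigl(\sum_k\delta_{x_k(s)-\max_j x_j(s)}\bigm|\max_j x_j(s)>\sqrt{2}\,s\bigr)$ as $s\to\ii$, which demands uniform control on the number of near-maximum particles in every bounded window; this leans on the localisation of the genealogies of extremal particles proved in \cite{abk}, together with the tail asymptotics \eqref{to_the_right}. Uniqueness of the subsequential limit is then obtained \emph{a posteriori}: since the Laplace functional determines the law of a point process, once \eqref{plan_LHS_limit} pins down the single scalar $\la(\phi)$ for every test $\phi$, only one $\mathcal D$ is consistent with the entire family of identifications.
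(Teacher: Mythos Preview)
Your outline has the right overall shape---reduce to Laplace functionals, get a Gumbel-type limit via F-KPP, then identify the cluster law---but both tasks contain genuine gaps.

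In task (i), Bramson's convergence theorem (Theorem~\ref{bramson_fundamental_convergence}) requires \emph{both} conditions 1 and 2. Your initial datum $1-u(0,y)=1-\ee^{-\phi(-y)}$ has compact support, so condition~2 fails: for $x$ large negative, $\int_x^{x+N}(1-u(0,y))\,\dd y=0$. The paper handles this by inserting a cutoff $\1_{\{\max\mathcal E_t\le\de\}}$, which restores the Heaviside tail needed for condition~2, and then removing the cutoff via a separate estimate (Lemmas~\ref{lem: rep cutoff} and~\ref{lifted}). This is a real step you have skipped.

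Task (ii) contains a more serious error in the cavity picture. You assert that at an intermediate time $r$ the ``contributing ancestors'' sit at distance $O(1)$ from $\sqrt{2}\,r$ and, after recentring, form the PPP $P_Z$ with exponential density. The results you cite say the opposite: by Theorem~\ref{entropic_paths} the ancestors of extremal particles at any intermediate time lie \emph{far below} the line $s\mapsto\tfrac{s}{t}m(t)$, by an amount diverging with $r$. The process $P_Z$ with density $CZ\sqrt{2}\ee^{-\sqrt{2}y}\dd y$ is the limit of the \emph{cluster extrema} (Proposition~\ref{poissonianity_cluster_e}), not of ancestor positions. The auxiliary Poisson process the paper actually uses (Theorem~\ref{main_theorem}) has density proportional to $(-x)\ee^{-\sqrt{2}x}\dd x$ on $(-\infty,0)$, and its contributing atoms sit at level $-\Theta(\sqrt{t})$ (Proposition~\ref{squareroot_prop}), not $O(1)$. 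Consequently the conditioning that produces $\mathcal D$ is on $\{\max_k x_k(t)>\sqrt{2}\,t-x\}$ with $x\sim -a\sqrt t$, and the hard technical work (Proposition~\ref{iid}, Corollary~\ref{cor: gaps}) is to show this conditional law converges and is \emph{independent of $a$}. The paper does this entirely through the F-KPP estimates of Propositions~\ref{bramson_fundamental_interpolation} and~\ref{cr_our_setting}, not via genealogical tightness; it remarks explicitly that the proof of Theorem~\ref{main_theorem} does not rely on the path results of \cite{abk}. Finally, your ``uniqueness a posteriori'' argument is circular as written: you need the law of $\mathcal D$ to compute your constant $C(\phi)$ before you can match it to $C\ee^{\sqrt{2}\la(\phi)}$, so the identification cannot itself supply the missing uniqueness.
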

We remark in passing that a similar structure is expected to emerge in all the models which are conjectured to fall into the universality class
of branching Brownian motion, such as the {\it 2-dim Gaussian Free Field} \cite{bolthausen_deuschel_giacomin, bolthausen_deuschel_zeitouni, bramson_zeitouni}, or the {\it cover time} for the simple random walk on the two dimensional discrete torus \cite{dembo, dembo_peres_rosen_zeitouni}. Loosely, the picture which is expected in all such models is that of "fractal-like clusters well separated from each other". \\

The key ingredient in the proof of Theorem \ref{teor main} is an identification of the extremal process of BBM with an auxiliary process constructed from a Poisson process,with an explicit density of points in the tail. 
This is a rigorous implementation of the {\it cavity approach} developed in the study of mean field spin glasses \cite{parisi} for the case of BBM, and might be of interest to determine extreme value statistics for other processes. We discuss the idea of the proof in the next section. 

\subsection{The Laplace transform of the extremal process of BBM}
Recently,  Brunet and Derrida \cite{brunet_derrida_second} have shown the existence of statistics of the extremal process ${\mathcal E}_t$ in the limit of large times.
We prove here that the limit of ${\mathcal E}_t$ exists as a point process using the convergence of the Laplace functionals,
%Arguments supporting the existence of the extremal process ${\mathcal E}_t$ in the limit of large times have been given recently by Brunet and Derrida \cite{brunet_derrida_second}. 
%We give a complete proof of this fact in Section \ref{proofs}, 
%based on the convergence of the Laplace functionals
%Arguments supporting the existence of the extremal process ${\mathcal E}_t$ in the limit of large times have been given recently by Brunet and Derrida \cite{brunet_derrida_second}. 
%We give a complete proof of this fact in Section \ref{proofs}, 
%based on the convergence of the Laplace functionals
\begin{equation}
\label{eqn: laplace}
\Psi_t(\phi)\equiv\E\left[\exp\left(-\int \phi(y) {\mathcal E}_t(dy)\right)\right],
\end{equation}
for $\phi\in \mathcal{C}_c(\R)$ non-negative.  
This gives the existence result in the proof of the main theorem.

\begin{prop}[Existence of the limiting extremal process] 
\label{teor_existence}
The point process ${\mathcal E}_t$ converges in law to a point process ${\mathcal E}$.  
\end{prop}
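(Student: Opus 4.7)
The natural strategy is to prove pointwise convergence of the Laplace functionals $\Psi_t(\phi)$ on $\mathcal{C}_c(\R)_+$ and combine this with tightness of the family $(\mathcal{E}_t)$ to deduce vague convergence in distribution. Since point processes form a closed subset of the space of random Radon measures (as recalled just before \eqref{ppp_expo}), the limit is then automatically a point process, which is exactly what the proposition claims.

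Step 1 (F-KPP reduction). For fixed $\phi \in \mathcal{C}_c(\R)_+$, set $f(y) \defi \exp(-\phi(-y))$. This $f$ takes values in $[\exp(-\|\phi\|_\infty),1]$ and equals $1$ outside a compact set. By McKean's argument,
$$u_\phi(t,x) \defi \E\left[\prod_{k=1}^{n(t)} f(x - x_k(t))\right]$$
solves the F-KPP equation \eqref{kpp_equation} with initial datum $u_\phi(0,\cdot)=f$, and a direct substitution gives $\Psi_t(\phi) = u_\phi(t, m(t))$. So convergence of $\Psi_t(\phi)$ is equivalent to convergence of $u_\phi(t,m(t))$.

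Step 2 (convergence via Bramson). The complement $v_\phi = 1 - u_\phi$ has compactly supported non-negative initial data and satisfies a reaction--diffusion equation whose linearisation near $v=0$ coincides with the one that drives the Heaviside problem. In particular, the natural travelling-wave speed is $\sqrt{2}$ and the correct logarithmic correction is $\frac{3}{2\sqrt{2}}\log t$. A general form of Bramson's convergence theorem, covering "bump-like" initial data such as this $f$ rather than only the Heaviside data of \eqref{travelling_one}, then yields
$$u_\phi(t, m(t)+x)\ \longrightarrow\ \omega_\phi(x) \qquad \text{uniformly in } x\in\R,$$
for some limit profile $\omega_\phi$. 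Setting $x=0$ gives $\Psi_t(\phi)\to \omega_\phi(0)=:\Psi(\phi)$.

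Step 3 (tightness and conclusion). To promote pointwise Laplace convergence to vague convergence in law via Kallenberg's criterion, I need tightness of $\mathcal{E}_t(K)$ for each compact $K$. Since $\phi$ has compact support, this reduces to showing that for every $a\in\R$ the family $\#\{k\leq n(t): x_k(t)-m(t)\geq a\}$ is tight. A key point is that this cannot be obtained from the crude many-to-one estimate, which gives an intensity growing like $t$ near $m(t)$; one must instead combine Lalley--Sellke tightness of the centred maximum with the authors' earlier pathwise results \cite{abk,abk_poissonian} on the genealogy of extremal particles, which confine them to a bounded window and thereby control the cardinality of the extremal cloud. Combining tightness with pointwise Laplace convergence and the closedness of point processes inside the random Radon measures yields $\mathcal{E}_t \Rightarrow \mathcal{E}$ for a point process $\mathcal{E}$. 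The main obstacle is Step 2: the initial datum $f = \exp(-\phi(-\cdot))$ is not of Heaviside type, so one must invoke a suitably general form of Bramson's theorem (or rerun his analysis for bump-like data) and verify that $m(t)$ remains the correct centring for this class of initial conditions; the secondary obstacle is Step 3, which is delicate precisely because the first moment of $\mathcal{E}_t(K)$ diverges with $t$, forcing a pathwise or localisation argument rather than a moment bound.
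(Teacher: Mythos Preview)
Your overall plan (Laplace functionals via the F-KPP connection, plus local finiteness from \cite{abk}) matches the paper's, but Step~2 contains a genuine gap that you yourself flag without resolving. Bramson's theorem as stated in the paper (Theorem~\ref{bramson_fundamental_convergence}) gives \emph{necessary and sufficient} conditions for uniform convergence to a travelling wave, and your initial datum $f=\exp(-\phi(-\cdot))$ violates condition~(2): since $\phi$ has compact support, $1-f$ is compactly supported and $\int_x^{x+N}(1-f(y))\,\dd y=0$ for all $x$ sufficiently negative. There is no ``suitably general form'' to invoke here---uniform convergence genuinely fails for bump-like data, and rerunning Bramson's analysis in this regime is not a small matter. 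So as written, Step~2 is not a proof but a hope.

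The paper's device is exactly what closes this gap: insert a cutoff on the maximum and write
\[
\Psi_t(\phi)=\E\Big[\ee^{-\int\phi\,\dd\mathcal E_t}\,\1_{\{\max\mathcal E_t\le\delta\}}\Big]+\E\Big[\ee^{-\int\phi\,\dd\mathcal E_t}\,\1_{\{\max\mathcal E_t>\delta\}}\Big].
\]
The first term equals $u_\delta(t,m(t))$ for $u_\delta$ solving F-KPP with initial condition $g_\delta(-x)=\ee^{-\phi(x)}\1_{\{x\le\delta\}}$; now $g_\delta(-x)=0$ for $x<-\delta$, so condition~(2) \emph{is} satisfied and Bramson applies directly to give $\lim_t\Psi_t^\delta(\phi)=\Psi^\delta(\phi)$. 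The second term is bounded by $\PP[\max\mathcal E_t>\delta]$, which vanishes as $\delta\to\infty$ after $t\to\infty$ by convergence of the recentred maximum. Finally $\delta\mapsto\Psi^\delta(\phi)$ is increasing and bounded by $1$, so $\Psi(\phi)=\lim_\delta\Psi^\delta(\phi)$ exists; the bound $\Psi_t^\delta(\phi)\le\PP[\max\mathcal E_t\le\delta]$ ensures $\Psi(\phi)<1$. This cutoff-then-monotone-limit argument is the missing idea in your Step~2; your Step~3 is essentially what the paper does (citing \cite{abk} for $\lim_N\lim_t\PP[\mathcal E_t(B)>N]=0$), so once Step~2 is repaired the rest goes through.
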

It is easy to see that the Laplace functional is a solution of the F-KPP equation following the observation of McKean, see Lemma \ref{mckean_fundamental} below. 
However, convergence is more subtle. It will follow from the convergence theorem of Bramson, see Theorem \ref{bramson_fundamental_convergence} below, but only after
an appropriate truncation of the functional needed to satisfy the hypotheses of the theorem.
The proof recovers a representation of the form \eqref{gumbel_like}.
More importantly, we obtain an expression for the constant $C$ as a function of the initial condition.
This observation is inspired by the work of Chauvin and Rouault \cite{chauvin_rouault}.
It will be at the heart of the representation theorem of the extremal process as a cluster process.
\begin{prop}\label{eqn: laplace rep}
Let  ${\mathcal E}_t$ be the process \eqref{extremal_process_def}.
For $\phi\in\mathcal{C}_c(\R)$ non-negative, 
\begin{equation} \label{laplace_repr_formula}
\lim_{t\to \infty} \E \left[ \exp\left(-\int \phi(y+x) {\mathcal E}_t(dy)\right)\right]= \E\left[\exp\left( - C(\phi) Z \ee^{-\sqrt{2} x}  \right)  \right]
\end{equation}
where, for $u(t,y)$ solution of F-KPP with initial condition $u(0,y)=e^{-\phi(y)}$,
$$
C(\phi)=\lim_{t\to\infty}\sqrt{\frac{2}{\pi}}\int_0^\infty \Big(1-u(t,y+\sqrt{2}t) \Big)ye^{\sqrt{2}y}dy
$$
is a strictly positive constant depending on $\phi$ only, and $Z$ is the derivative martingale. 
\end{prop}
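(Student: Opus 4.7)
The plan proceeds in three stages: (i) identify the Laplace functional on the left-hand side of \eqref{laplace_repr_formula} as a solution of the F-KPP equation \eqref{kpp_equation} via McKean's observation; (ii) apply Bramson's convergence theorem (Theorem \ref{bramson_fundamental_convergence}) after a suitable truncation to obtain convergence to a traveling wave; (iii) use the Lalley--Sellke integral representation \eqref{gumbel_like} together with the uniqueness of traveling waves to recast the limit in the form of the right-hand side of \eqref{laplace_repr_formula} and identify the constant $C(\phi)$.

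\emph{Step 1 (McKean).} Setting $g := e^{-\phi}$ and
\[
v(t,r) := \E\!\left[\prod_{k\leq n(t)} g\bigl(r - x_k(t)\bigr)\right],
\]
a first-branching argument exactly analogous to the one leading from \eqref{bbm_repr} to \eqref{kpp_equation} shows that $v$ solves F-KPP with initial datum $v(0,r) = g(r) = e^{-\phi(r)}$. Using the symmetry $\{x_k(t)\}_k \law \{-x_k(t)\}_k$ of BBM, the Laplace functional on the left of \eqref{laplace_repr_formula} can be rewritten as $v(t,\cdot)$ evaluated at an argument that is linear in $x$ and $m(t)$, so the claim reduces to controlling the large-$t$ behaviour of $v$ along the F-KPP front.

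\emph{Step 2 (Bramson).} Since $\phi$ is compactly supported, so is $1 - v(0,\cdot)$, and in particular the weighted tail integral $\int_0^\infty (1 - v(0,y))\, y\, e^{\sqrt{2} y}\, dy$ is finite. Nevertheless $v(0,\cdot)$ is not of Heaviside type --- it equals $1$ off a compact set --- so invoking Bramson's theorem requires the ``appropriate truncation of the functional'' alluded to after Proposition \ref{teor_existence}; a natural option is to monotonically sandwich $v(0,\cdot)$ between step-function initial data and transfer the convergence through the sandwich. This produces
\[
v(t, m(t)+y) \longrightarrow \omega_\phi(y), \qquad t\to\infty,
\]
uniformly on compact sets, with $\omega_\phi$ a non-trivial solution of the traveling-wave ODE \eqref{wave_pde}.

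\emph{Step 3 (Lalley--Sellke and identification of $C(\phi)$).} By the uniqueness of solutions of \eqref{wave_pde} up to translation, $\omega_\phi$ is a shift of the Heaviside wave $\omega$; substituting this shift into \eqref{gumbel_like} absorbs it into a multiplicative renormalization of the constant, giving
\[
\omega_\phi(y) = \E\!\left[\exp\bigl(-C(\phi)\, Z\, e^{-\sqrt{2} y}\bigr)\right],
\]
which combined with Step 1 yields \eqref{laplace_repr_formula}. The explicit formula for $C(\phi)$ is obtained from Bramson's own expression relating the traveling-wave shift to the weighted mass $\sqrt{2/\pi}\int_0^\infty (1 - v(t, y+\sqrt{2} t))\, y\, e^{\sqrt{2} y}\, dy$; the factor $y\, e^{\sqrt{2} y}$ (rather than a pure exponential) encodes precisely the logarithmic correction $-\tfrac{3}{2\sqrt{2}}\log t$ in $m(t)$. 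Strict positivity of $C(\phi)$ for $\phi \not\equiv 0$ comes from the strong comparison principle: since $v(0,\cdot)\not\equiv 1$, one has $v(t,y) < 1$ for all $t>0$, making the integrand in the definition of $C(\phi)$ strictly positive.

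The principal obstacle is Step 2: Bramson's convergence theorem is classically formulated for Heaviside-type initial data, whereas here $v(0,\cdot) = e^{-\phi}$ is only a compactly supported perturbation of the constant $1$. Designing a truncation that lies within Bramson's hypotheses \emph{and} preserves both the convergence and the identification of the constant $C(\phi)$ in the limit is the delicate technical point.
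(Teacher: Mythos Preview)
Your three-step outline matches the paper's architecture, and you correctly flag the truncation in Step~2 as the crux. Two points, however, are genuine gaps rather than details to be filled in.

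First, your positivity argument for $C(\phi)$ is invalid. The strong maximum principle does give $v(t,y)<1$ for all finite $t>0$, but this says nothing about the limit $t\to\infty$ of the integral: in fact $1-v(t,y+\sqrt{2}t)\to 0$ pointwise (the argument sits $\tfrac{3}{2\sqrt{2}}\log t$ to the right of the front $m(t)$), so positivity of $C(\phi)$ is a delicate balance, not a consequence of strict positivity of the integrand. The paper proves it indirectly: if $C(\phi)=0$ then the Laplace functional of the auxiliary process $\Pi_t$ would tend to $1$, contradicting the domination $\Pi_t \geq \Pi_t^{\text{ext}}$ and the already-established convergence of the cluster-extrema process $\Pi_t^{\text{ext}}$ to a nontrivial PPP (Proposition~\ref{poissonianity_cluster_e}).

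Second, the identification of $C(\phi)$ with the specific integral $\sqrt{2/\pi}\int_0^\infty(1-u(t,y+\sqrt{2}t))\,y\,e^{\sqrt{2}y}\,dy$ does not come from any ready-made ``Bramson expression''. The paper extracts it by (i) sandwiching $u_\delta$ between $\gamma(r)^{\pm 1}\psi(r,t,\cdot)$ via Proposition~\ref{bramson_fundamental_interpolation}, (ii) computing the $t\to\infty$ asymptotics of $\psi$ explicitly, and (iii) matching the resulting coefficient against the tail asymptotic \eqref{to_the_right} of the wave, which forces the two constants to agree. Your ``sandwich between step-function data'' is not the paper's device either: the truncation is one-sided, $u_\delta(0,x)=1-e^{-\phi(-x)}\1_{\{-x\leq\delta\}}$, and the removal of $\delta$ is done by bounding $u_\delta-u$ by the tail $\PP[\max_k x_k(t)>\sqrt{2}t+\delta+x]$ and controlling this via the sharp bound of Lemma~\ref{tight_bounds}.
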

A straightforward consequence of Proposition \ref{eqn: laplace rep}
is the {\it invariance under superpositions} of the random measure ${\mathcal E}$, conjectured by Brunet and Derrida \cite{brunet_derrida_first, brunet_derrida_second}.

\begin{cor}[Invariance under superposition] \label{invariance_bbm} 
The law of the extremal process of the superposition of $n$ independent BBM started in $x_1, \dots, x_n \in \R$ coincides in the limit of large time with that of a single BBM, up to a random shift. 
\end{cor}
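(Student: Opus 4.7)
The plan is to reduce Corollary~\ref{invariance_bbm} to an elementary manipulation of the Laplace functional representation in Proposition~\ref{eqn: laplace rep}. Let $\mathcal{E}_t^{\text{sup}}$ denote the extremal process of the superposition of $n$ independent BBMs started at $x_1,\ldots,x_n$, centered at $m(t)$. Since the $n$ BBMs are independent and the atoms of the $i$-th constituent are those of a standard (centered) BBM extremal process $\mathcal{E}_t^{(i)}$ translated by $x_i$,
\begin{equation*}
\E\left[\exp\left(-\int \phi\, d\mathcal{E}_t^{\text{sup}}\right)\right]
=\prod_{i=1}^n \E\left[\exp\left(-\int \phi(y+x_i)\, d\mathcal{E}_t^{(i)}(y)\right)\right]
\end{equation*}
for every nonnegative $\phi\in\mathcal{C}_c(\R)$.

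Next I would pass to the limit $t\to\infty$ in each factor by applying Proposition~\ref{eqn: laplace rep}: the $i$-th factor converges to $\E[\exp(-C(\phi) Z^{(i)} \ee^{-\sqrt{2} x_i})]$, where $Z^{(1)},\ldots,Z^{(n)}$ are i.i.d.\ copies of the derivative martingale and the same constant $C(\phi)$ appears in every factor, since $C(\phi)$ is determined by the test function alone. By independence of the $Z^{(i)}$, the product collapses into
\begin{equation*}
\E\left[\exp\left(-C(\phi)\, \tilde Z\right)\right], \qquad \tilde Z \defi \sum_{i=1}^n Z^{(i)} \ee^{-\sqrt{2} x_i}.
\end{equation*}

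Finally, I would identify this expression with the Laplace functional of a single BBM's limiting extremal process shifted by a random amount. By Proposition~\ref{eqn: laplace rep}, the Laplace functional of $\mathcal{E}$ shifted by a random variable $X$ is $\E[\exp(-C(\phi) Z \ee^{-\sqrt{2} X})]$, where the coupling of $(Z,X)$ is at our disposal. It then suffices to exhibit a joint law satisfying $Z \ee^{-\sqrt{2} X}\law\tilde Z$. The most direct choice is to take $Z$ to be an independent copy of the derivative martingale and set $X\defi -\tfrac{1}{\sqrt{2}}\log(\tilde Z/Z)$; this is well defined because $\tilde Z$ and $Z$ are almost surely strictly positive by the Lalley--Sellke theorem. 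With this coupling one has $Z\ee^{-\sqrt{2}X}=\tilde Z$ almost surely, so the two Laplace functionals coincide for every $\phi$, and the corollary follows by uniqueness of the Laplace functional of a point process.

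No substantial obstacle is expected: the corollary is a direct consequence of Proposition~\ref{eqn: laplace rep} together with the independence of the constituent BBMs. The only point meriting care is the observation that the constant $C(\phi)$ is identical across the $n$ factors, which is transparent from its defining formula, since it depends only on $\phi$ and not on the starting position of the BBM.
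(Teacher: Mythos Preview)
Your proposal is correct and follows exactly the route the paper intends: the paper states the corollary as ``a straightforward consequence of Proposition~\ref{eqn: laplace rep}'' and gives no further argument, and your factorisation of the Laplace functional together with the observation that each factor converges to $\E[\exp(-C(\phi)Z^{(i)}\ee^{-\sqrt{2}x_i})]$ is precisely that consequence. The only point worth tightening is the final identification: the formula $\E[\exp(-C(\phi)Z\ee^{-\sqrt{2}X})]$ for the Laplace functional of $\mathcal{E}$ shifted by a random $X$ is immediate from Proposition~\ref{eqn: laplace rep} when $X$ is independent of $\mathcal{E}$, but your $X$ depends on $Z$; to justify this coupling you implicitly use that, conditionally on $Z$, the law of $\mathcal{E}$ is a deterministic shift of a fixed process (the content of Theorem~\ref{teor main} or Theorem~\ref{main_theorem}), which is fine here since the corollary is stated informally and both results are available.
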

As conjectured in \cite{brunet_derrida_second}, the superposability property is satisfied by any cluster point process constructed from 
a Poisson point process with exponential density to which, at each atom, is attached an iid point process. Theorem \ref{main_theorem} shows that this is indeed the case for the extremal process of branching Brownian motion, and provides a characterization of the law of the clusters in terms of branching Brownian motions conditioned on performing "unusually high" jumps. 

\subsection{An auxiliary point process}

Let $(\Omega', \mathcal F', P)$ be a probability space, and $Z: \Omega' \to \R_+$
with distribution as that of the limiting derivative martingale \eqref{eqn: deriv}. 
(Expectation with respect to $P$ will be denoted by 
$E$). 
Let $(\eta_i; i \in \N)$ be the atoms of a Poisson point process on $(-\infty,0)$ with density 
\beq \label{density}
\sqrt{\frac{2}{\pi}} \left(-x \ee^{-\sqrt{2} x} \right) \dd x \ .
\eeq 
For each $i\in \N$, consider independent branching Brownian motions with drift $-\sqrt{2}$, i.e. $\left\{x_k^{(i)}(t) -\sqrt{2} t; k \leq n_i(t)\right\}$, issued on $(\Omega', \mathcal F', P)$. 
Remark that, by \eqref{travelling_one} and \eqref{centering_kpp}, to given $i\in \N$,
\beq \label{drifting_off}
 \max_{k \leq n_i(t)} x^{(i)}_k(t)- \sqrt{2} t \to -\infty, \ \text{ $P$-a.s.} 
\eeq
The auxiliary point process of interest is the superposition of the iid BBM's with drift and shifted by $\eta_i+\frac{1}{\sqrt{2}}\log Z$:
\beq
\Pi_t \defi \sum_{i,k}\delta_{ \frac{1}{\sqrt{2}}\log Z+\eta_i+x_k^{(i)}(t)-\sqrt{2} t }
\eeq 
The existence and non-triviality of the process in the limit $t\to\infty$ is not straightforward, especially in view of  \eqref{drifting_off}.
It will be proved by recasting the problem into the frame of convergence of solutions of the F-KPP equations to travelling waves, as in the proof of Theorem \ref{teor_existence}.
It turns out that the density of the Poisson process points growing faster than exponentially as $x\to -\infty$ compensates for the fact that BBM's with drift wander off to $-\infty$. 

\begin{teor}[The auxiliary point process] \label{main_theorem}
Let  ${\mathcal E}_t$ be the extremal process \eqref{extremal_process_def} of BBM. Then
\[
\lim_{t\to \infty} {\mathcal E}_t \law \lim_{t\to \infty} \Pi_t \ .
\]
\end{teor}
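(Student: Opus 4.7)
The plan is to prove $\Pi_t \Rightarrow \mathcal E$ by matching Laplace functionals with those of $\mathcal E_t$. By Proposition \ref{teor_existence}, $\mathcal E_t$ converges in law to a point process $\mathcal E$; by Proposition \ref{eqn: laplace rep} at $x=0$, the limit of the Laplace functional at $\phi \in \mathcal C_c(\R)$, $\phi \geq 0$, is $\E\bigl[\ee^{-C(\phi) Z}\bigr]$. It suffices to show the same limit for
\begin{equation*}
\Psi_t^\Pi(\phi) \;=\; \E\!\left[\exp\!\Big(-\!\int \phi\, d\Pi_t\Big)\right].
\end{equation*}

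First, compute $\Psi_t^\Pi(\phi)$ explicitly. Conditioning on $Z$, the Poisson atoms $\{\eta_i\}$ and the i.i.d.\ BBMs $\{x_k^{(i)}(t)\}$ are mutually independent. McKean's observation together with the BBM symmetry $\{x_k\}\law\{-x_k\}$ gives
\begin{equation*}
\E\!\left[\prod_k \ee^{-\phi(c + x_k(t))}\right] = u(t, c),
\end{equation*}
where $u$ solves F-KPP with initial datum $\ee^{-\phi}$. The Laplace functional of the Poisson process with intensity $\sqrt{2/\pi}\,(-\eta)\,\ee^{-\sqrt 2 \eta}\,d\eta$ on $(-\infty,0)$ then yields
\begin{equation*}
\Psi_t^\Pi(\phi) = \E_Z\!\left[\exp\!\left(-\!\int_{-\infty}^0 \bigl(1 - u(t, \tfrac{1}{\sqrt 2}\log Z + \eta - \sqrt 2 t)\bigr) \sqrt{\tfrac{2}{\pi}}(-\eta)\,\ee^{-\sqrt 2 \eta}\,d\eta\right)\right].
\end{equation*}
Changing variables recasts the inner integral as
\begin{equation*}
I_t(Z) = \sqrt{\tfrac{2}{\pi}}\int_0^\infty \bigl(1 - u(t, \tfrac{1}{\sqrt 2}\log Z - s - \sqrt 2 t)\bigr)\, s\, \ee^{\sqrt 2 s}\, ds,
\end{equation*}
whose structure parallels the defining integral of $C(\phi)$ in Proposition \ref{eqn: laplace rep}.

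The central step is to establish $I_t(Z) \to C(\phi)\, Z$ pointwise in $Z$ as $t \to \infty$. Invoking BBM symmetry to rewrite $u$ at the negative argument in terms of a quantity of the form $1 - u(t, \cdot + \sqrt 2 t)$ appearing in the definition of $C(\phi)$, and substituting $s = s' + \tfrac{1}{\sqrt 2}\log Z$, the factor $Z = \ee^{\sqrt 2 \cdot \tfrac{1}{\sqrt 2}\log Z}$ emerges from the weight $s\,\ee^{\sqrt 2 s}$. The dominant piece then reproduces $C(\phi)\, Z$ in the limit, while the $\log Z$ cross term and the boundary contribution on $[-\tfrac{1}{\sqrt 2}\log Z, 0)$ both vanish as $t \to \infty$, thanks to the Bramson tail bound $1 - u(t, y + \sqrt 2 t) \lesssim y\, \ee^{-\sqrt 2 y}$ obtained through the truncation device of Proposition \ref{eqn: laplace rep}, needed to bring the non-Heaviside initial datum $\ee^{-\phi}$ within the scope of Bramson's convergence theorem. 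Dominated convergence (since $\ee^{-I_t(Z)} \in [0,1]$) then transfers this limit through the outer expectation, so $\lim_{t\to\infty} \Psi_t^\Pi(\phi) = \E\bigl[\ee^{-C(\phi) Z}\bigr]$, as required.

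The main obstacle is precisely the pointwise identification $I_t(Z) \to C(\phi)\, Z$: rigorously absorbing the $\log Z$ cross term and the boundary correction demands sharp uniform control of $1 - u(t, y + \sqrt 2 t)$ near the wavefront. The Poisson intensity $\sqrt{2/\pi}(-\eta)\,\ee^{-\sqrt 2 \eta}$ of the auxiliary process is tailored exactly so that this calculation produces the Lalley--Sellke integral representation of the travelling wave, yielding the constant $C(\phi)$ multiplied by $Z$ in the form needed to match the limit of $\Psi_t(\phi)$.
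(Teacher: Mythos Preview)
Your overall strategy coincides with the paper's: compute the Laplace functional of $\Pi_t$ via the Poisson formula, reduce it to $E\bigl[\exp(-I_t(Z))\bigr]$ for an integral of the form $\sqrt{2/\pi}\int_0^\infty \bigl(1-u(t,y+\sqrt 2 t+\text{shift})\bigr)\,y\,\ee^{\sqrt 2 y}\,dy$, and show $I_t(Z)\to C(\phi)\,Z$ so as to match Proposition~\ref{eqn: laplace rep}. The paper carries this out in a single stroke via the shift lemma \eqref{eqn: shift const}, which follows from Proposition~\ref{cr_our_setting}: the pointwise asymptotic $u(t,x+\sqrt 2 t)\sim (\text{const})\,\ee^{-\sqrt 2 x}\,\tfrac{\log t}{t^{3/2}}$ makes the dependence on the shift $\tfrac{1}{\sqrt 2}\log Z$ explicit and yields the factor $Z$ directly, with no cross terms or boundary corrections to estimate.

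Your substitution route can be made to work, but the tool you invoke is too weak. The bound $1-u(t,y+\sqrt 2 t)\lesssim y\,\ee^{-\sqrt 2 y}$, without any $t$-dependent prefactor, makes the $\log Z$ cross term $a\int_0^\infty \bigl(1-u(t,y+\sqrt 2 t)\bigr)\,\ee^{\sqrt 2 y}\,dy$ bounded above only by a constant times $\int_0^\infty y\,dy=\infty$, so nothing vanishes. What is actually needed is the sharper estimate carrying the factor $t^{-3/2}$ together with the Gaussian correction $\ee^{-y^2/2t}$ (compare Lemma~\ref{tight_bounds} and the computation in \eqref{max_nocontr}); with these, the cross term is $O(t^{-1/2})$ and the argument closes. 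This is precisely the content packaged into Proposition~\ref{cr_our_setting} and the ensuing lemma \eqref{eqn: shift const}, so rather than splitting into a main term, a cross term, and a boundary piece, you should appeal to those results directly. Note also that after the $x\mapsto -x$ reflection the relevant F-KPP solution has initial datum $\ee^{-\phi(-\cdot)}$ rather than $\ee^{-\phi}$; this is consistent with how $C(\phi)$ is actually produced in the proof of Proposition~\ref{eqn: laplace rep} (see the initial condition in Lemma~\ref{lem: rep cutoff}), so no discrepancy arises, but your sketch glosses over this point.
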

The above will follow from the fact that the Laplace functionals of $\lim_{t\to\infty}\Pi_t$ admits a representation of
the form \eqref{laplace_repr_formula}, and that the constants $C(\phi)$ in fact correspond.
\begin{rem}
An elementary consequence of the above identification is that the extremal process $\mathcal{E}$ shifted back by $\frac{1}{\sqrt{2}}\log Z$ 
is an infinitely divisible point process. 
The reader is referred to \cite{kallenberg} for definitions and properties of such processes..
\end{rem}

We conjectured Theorem \ref{main_theorem} in a recent paper \cite{abk_poissonian}, where it is pointed out that such a representation 
is a natural consequence of the results on the genealogies and the paths of the extremal particles in \cite{abk}.
The proof of Theorem \ref{main_theorem} provided here does not rely on such techniques.
It is based on the analysis of Bramson \cite{bramson_monograph} and the subsequent works of Chauvin-Rouault \cite{chauvin_rouault, chauvin_rouault2}, and  Lalley and Sellke \cite{lalley_sellke}. 
However, as discussed in Section \ref{heuristics}, the results on the genealogies of \cite{abk} provides a useful heuristics.
It is  likely that such path techniques be an alternative approach to the Feynman-Kac representation on which the results of \cite{bramson_monograph} (and thus those of our present paper) are based.

We now list some of the properties of the Poisson cluster process in the limit of large times (which by Theorem \ref{main_theorem} coincide with those of the extremal process of BBM).
\begin{prop}[Poissonian nature of the cluster-extrema] \label{poissonianity_cluster_e} 
Consider $\Pi_t^{\text{ext}}$ the point process obtained by retaining from $\Pi_t$ the maximal particles of the BBM's, 
\[
\Pi_{t}^{\text{ext}} \defi \sum_i \delta_{ \frac{1}{\sqrt{2}}\log Z+\eta_i + \max_k \{ x_k^{(i)}(t) - \sqrt{2} t\}}\ .
\]
Then $\lim_{t\to \infty} \Pi_{t}^{\text{ext}} \law \emph{PPP}\left(  Z \sqrt{2} C \ee^{-\sqrt{2} x} \dd x \right)$, where $C$ is the same constant appearing in \eqref{gumbel_like}.
In particular, the maximum of $\lim_{t\to \infty} \Pi_{t}^{\text{ext}} $ has the same law as the limit law of the maximum of BBM.
\end{prop}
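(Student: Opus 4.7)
The plan is to exploit the explicit Poisson structure of $\Pi_t^{\text{ext}}$ and identify the limiting intensity by direct computation. The first observation is that, conditional on $Z$ (which enters only through the deterministic shift $\tfrac{1}{\sqrt{2}}\log Z$), the point process $\sum_i \delta_{\eta_i + M_i(t)}$ with $M_i(t) \defi \max_{k}\{x_k^{(i)}(t) - \sqrt{2}t\}$ is obtained by displacing each atom of the Poisson process of intensity $\sqrt{2/\pi}(-x)\ee^{-\sqrt{2}x}\1_{x<0}\dd x$ by an independent copy of the BBM-maximum centered at $\sqrt{2}t$. By the displacement theorem for Poisson processes \cite{kallenberg}, this is again a Poisson point process, so $\Pi_t^{\text{ext}}$ is, conditionally on $Z$, a PPP for every finite $t$.

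To identify the limit, I would compute the intensity measure on half-lines. Using Fubini together with the identity $P(M(t)>y)=1-u(t,y+\sqrt{2}t)$ and the change of variables $y=a-\tfrac{1}{\sqrt{2}}\log Z - x$, one finds, writing $b\defi a-\tfrac{1}{\sqrt{2}}\log Z$,
\begin{equation*}
\E\bigl[\Pi_t^{\text{ext}}((a,\infty))\,\big|\,Z\bigr] = \sqrt{2/\pi}\,Z\,\ee^{-\sqrt{2}a}\int_{b}^{\infty}(y-b)\,\ee^{\sqrt{2}y}\bigl(1-u(t,y+\sqrt{2}t)\bigr)\dd y.
\end{equation*}
The task thus reduces to showing that for every $b\in\R$,
\begin{equation*}
\lim_{t\to\infty}\int_{b}^{\infty}(y-b)\,\ee^{\sqrt{2}y}\bigl(1-u(t,y+\sqrt{2}t)\bigr)\dd y = \sqrt{\pi/2}\cdot C.
\end{equation*}
For $b=0$ this is precisely the integral giving $C(\phi)$ in Proposition \ref{eqn: laplace rep} applied (by approximation) to the Heaviside initial datum, and its value is the Lalley--Sellke constant $C$ by consistency with \eqref{gumbel_like}. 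For arbitrary $b$, the corrections that appear upon splitting the integral --- namely $-b\int_b^\infty \ee^{\sqrt{2}y}(1-u)\dd y$ and a compactly supported piece near $y=b$ --- vanish in the limit because the uniform-in-$t$ tail estimate $1-u(t,m(t)+x)\lesssim x\ee^{-\sqrt{2}x}$ of Bramson forces the mass of the integrand to concentrate in the window $y\sim\tfrac{3}{2\sqrt{2}}\log t$, which drifts off to the right of any fixed $b$.

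Convergence of the intensities $\mu_t((a,\infty))\to CZ\ee^{-\sqrt{2}a}$ (almost surely in $Z$) for every $a\in\R$ upgrades to convergence of the point processes themselves: the Laplace functional of a Poisson process is determined by its intensity, and dominated convergence transfers intensity convergence on bounded Borel sets into convergence of Laplace functionals, hence the weak convergence $\Pi_t^{\text{ext}}\to\text{PPP}(\sqrt{2}CZ\ee^{-\sqrt{2}x}\dd x)$. The concluding clause on the maximum is then a direct calculation: conditional on $Z$, the maximum of the limit PPP has distribution function $\exp(-CZ\ee^{-\sqrt{2}M})$; integrating out $Z$ gives $\omega(M)$ by \eqref{gumbel_like} --- precisely the Bramson limit law for the maximum of BBM.

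The main obstacle lies in the identification of the limiting integral: the integrand tends pointwise to zero (since $M(t)\to -\infty$ by \eqref{drifting_off}), yet the integral must converge to the nontrivial constant $\sqrt{\pi/2}\cdot C$ through a delicate cancellation between the polynomial prefactor in the tail $1-\omega(x)\sim Cx\ee^{-\sqrt{2}x}$ and the logarithmic shift $\tfrac{3}{2\sqrt{2}}\log t$ hidden in $m(t)$. Making this quantitative requires the uniform-in-$t$ control on $1-u(t,m(t)+x)$ that underlies the proof of Proposition \ref{eqn: laplace rep} and Bramson's convergence theorem.
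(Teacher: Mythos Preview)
Your approach is correct and close in spirit to the paper's, but organised differently. You invoke the displacement theorem to see at once that $\Pi_t^{\text{ext}}$ is Poisson for every finite $t$, and then reduce everything to convergence of the intensity on half-lines. The paper instead writes down the Laplace functional, uses the Poisson formula to get
\[
E\Bigl[\exp-\sum_i \phi\bigl(\eta_i+M^{(i)}(t)-\sqrt{2}t\bigr)\Bigr]
=\exp\Bigl(-\int_{-\infty}^0\bigl(1-\E[e^{-\phi(x+M(t)-\sqrt{2}t)}]\bigr)\sqrt{\tfrac{2}{\pi}}(-x)e^{-\sqrt{2}x}\,dx\Bigr),
\]
and then appeals to a separately proved vague-convergence lemma (Lemma~\ref{lem: max convergence}) stating that $\int h(x)\,\PP(\max_i x_i(t)-\sqrt{2}t+y\in dx)$ integrated against $\sqrt{2/\pi}(-y)e^{-\sqrt{2}y}dy$ converges to $\int h(a)\sqrt{2}Ce^{-\sqrt{2}a}da$. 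Both routes collapse to the same analytic fact: the integral $\int_b^\infty (y-b)e^{\sqrt{2}y}\PP(M(t)>y)\,dy$ converges to a finite constant independent of $b$.

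The difference is in how this fact is justified. The paper has packaged the work into Proposition~\ref{cr_our_setting} (Chauvin--Rouault asymptotics $e^{x\sqrt{2}}t^{3/2}/\log t\cdot u(t,x+\sqrt{2}t)\to C$) and Lemma~\ref{lem: rep cutoff}/\eqref{eqn: shift const}, so that the $b$-independence is a one-line consequence: shifting the argument of $u$ by $b$ multiplies the limit by $e^{-\sqrt{2}b}$, which is exactly cancelled by the prefactor. Your argument for $b$-independence --- splitting the integral and arguing that the correction terms vanish because mass concentrates near $y\sim\tfrac{3}{2\sqrt{2}}\log t$ --- is the right heuristic, but to make it rigorous you would end up reproving Proposition~\ref{cr_our_setting} or something equivalent. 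The identification of the constant with the Lalley--Sellke $C$ also goes through this proposition rather than through Proposition~\ref{eqn: laplace rep}, which as stated requires $\phi\in\mathcal{C}_c$ and does not directly accept the Heaviside datum. In short: your skeleton is sound and arguably more transparent (the displacement theorem makes the Poisson structure manifest), but the hard step you flag as ``the main obstacle'' is exactly where the paper's Bramson/Chauvin--Rouault machinery does the work.
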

The fact that the laws of the maximum of the cluster-extrema and of BBM correspond is a consequence of \eqref{gumbel_like}
and the formula for the maximum of a Poisson process. 

The Poissonian nature of the cluster-extrema in the case of BBM was first proved in \cite{abk_poissonian}. 
Given the equivalence of the extremal and cluster processes, the above thus comes as no surprise. 
We will provide a different proof from the one given in \cite{abk_poissonian},
which will be useful  for the proof of Theorem \ref{main_theorem}. 

The last ingredient of the proof of Theorem \ref{teor main} is a characterization of the law of the clusters, that is the distribution of the points that are extremal and
that come from the same atom of the Poisson process $\eta$.
To this aim, it is necessary to understand which atoms in fact contribute to the extremal process.
The following result is a good control of the location of such atoms that
implies that the branching Brownian motion forming the clusters must perform unusually high jumps, of the order of $\sqrt{2}t+a\sqrt{t}$ for some $a>0$.

\begin{prop}\label{squareroot_prop}
Let $y\in \R$ and $\vare>0$ be given. There exists $0 < C_1 < C_2 < \infty$ and $t_0$ depending only on  $y$ and $\vare$,  such that
\[
\sup_{t\geq t_0} P\left[\exists_{i,k}: \eta_i + x_k^{(i)}(t)-\sqrt{2} t  \geq 
y,\, \and \: \eta_i \notin \left[-C_1 \sqrt{t}, -C_2 \sqrt{t}\right]  \right] < \vare.
\]
\end{prop}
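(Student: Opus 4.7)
The plan is a first moment / union bound argument on the Poisson atoms. Writing
\[
q_t(a)\defi P\!\left[\max_{k\leq n(t)} x_k(t)\geq \sqrt 2\,t+a\right]
\]
for the tail of the max of a standard BBM started at $0$, conditioning on the atom configuration $\{\eta_i\}$ and using the independence of the BBMs attached to different atoms, one obtains by Campbell's formula
\[
P\left[\exists(i,k):\, \eta_i+x_k^{(i)}(t)-\sqrt 2\,t\geq y,\ \eta_i\notin I_t\right]\ \leq\ \int_{(-\infty,0)\setminus I_t}\sqrt{\tfrac{2}{\pi}}\,(-x)\,e^{-\sqrt 2 x}\, q_t(y-x)\,dx,
\]
where $I_t=[-C_2\sqrt t,-C_1\sqrt t]$. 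The goal is then to choose $C_1$ small and $C_2$ large so the right-hand side is below $\varepsilon$ for all $t\geq t_0$.

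The key ingredient I would use is the refined Bramson-type tail bound
\[
q_t(a)\ \leq\ \frac{K(1+a+\log t)}{t^{3/2}}\,\exp\!\left(-\sqrt 2\, a-\frac{a^2}{2t}\right),\qquad a\geq 0,\ t\geq 2,
\]
which combines the polynomial/$t^{-3/2}$ correction coming from the logarithmic shift in $m(t)$ with the Gaussian factor needed in the moderate deviation regime $a=O(\sqrt t)$. This should be extracted from Bramson's analysis \cite{bramson_monograph} (essentially by combining the standard Bramson estimate $C(1+a)e^{-\sqrt 2 a}$ for $P[\max>m(t)+a]$ with the many-to-one / first moment bound and a Brownian bridge localization in the moderate deviation window).

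Substituting, setting $w=-x\geq 0$ and $a=y+w$, the estimate becomes
\[
P[\exists]\ \leq\ \frac{K e^{-\sqrt 2 y}}{t^{3/2}}\sqrt{\tfrac{2}{\pi}}\int_{[0,C_1\sqrt t]\cup[C_2\sqrt t,\infty)} w\,(1+y+w+\log t)\,\exp\!\left(-\frac{(y+w)^2}{2t}\right)dw,
\]
after the exact cancellation $e^{\sqrt 2 w}\cdot e^{-\sqrt 2(y+w)}=e^{-\sqrt 2 y}$. I would then split the integral into two regimes:

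On the small-$w$ piece $w\in[0,C_1\sqrt t]$ the Gaussian factor is $O(1)$ and the leading term is $\int_0^{C_1\sqrt t}w^2\,dw=\tfrac13 C_1^3\,t^{3/2}$, producing a contribution of order $C_1^3$, which one makes $<\varepsilon/2$ by taking $C_1$ small. On the large-$w$ piece $w\geq C_2\sqrt t$, the change of variables $v=(y+w)/\sqrt t$ turns the leading term into $t^{3/2}\int_{C_2}^\infty v^2 e^{-v^2/2}\,dv$, producing a contribution $\lesssim \int_{C_2}^\infty v^2 e^{-v^2/2}\,dv$, which is $<\varepsilon/2$ for $C_2$ large. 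The $\log t$ term produces only subleading $O(\log t/\sqrt t)$ corrections, absorbed by taking $t_0$ large.

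The main obstacle is the refined tail bound itself. Bramson's exponential bound $(1+a)e^{-\sqrt 2 a}$ alone is not sufficient, because the Poisson density grows exactly like $(-x)e^{-\sqrt 2 x}$ and would cancel Bramson's exponential, leaving a polynomial in $w$ that is not integrable at infinity; conversely, the pure first moment bound $\tfrac{1}{\sqrt t}e^{-\sqrt 2 a-a^2/(2t)}$ is sufficient on the large-$w$ piece only if $C_2\gg\sqrt{\log t}$, which violates the constant-$C_2$ requirement of the proposition. The heart of the matter is therefore to combine both corrections into a single bound valid uniformly on $[0,\infty)$; this is precisely what matches the heuristic that the contributing $\eta_i$'s lie in the window $|\eta_i|\asymp\sqrt t$, at which the combined ``entropy'' $-\sqrt 2 w$ from the BBM tail and the Gaussian penalty $w^2/(2t)$ balance the Poisson growth $\sqrt 2 w$.
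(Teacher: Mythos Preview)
Your approach is correct and essentially identical to the paper's: the paper also uses a first-moment (Markov/Campbell) bound over the Poisson atoms, invokes precisely the refined tail estimate you describe (stated there as Lemma~\ref{tight_bounds}, i.e.\ $P[\max_k x_k(t)-m(t)\geq X]\leq \rho X\exp(-\sqrt 2 X-X^2/(2t)+\tfrac{3}{2\sqrt 2}X\log t/t)$, derived from Bramson's Prop.~8.2), performs the same $e^{\sqrt 2 w}$ cancellation, and concludes via the change of variables $x\mapsto x/\sqrt t$ to obtain $\int_0^{C_1}x^2e^{-x^2/2}dx$ and $\int_{C_2}^\infty x^2e^{-x^2/2}dx$ on the two pieces. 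Your diagnosis of why neither the pure Bramson bound nor the pure many-to-one bound suffices is also exactly right.
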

It will be shown that the conditional law of a BBM given the event that the maximum makes a displacement greater than $\sqrt{2}t$ exists
in the limit $t\to\infty$ and, perhaps somewhat surprisingly, does not depend on the displacement.
This will entail that, seen from the cluster-extrema, the laws of 
the clusters are identical.
\begin{prop} 
\label{iid}
Let $x\defi -a\sqrt{t}+b$ for some $a>0, b\in \R$. In the limit $t\to\infty$, the conditional law of 
$\sum_{k\leq n(t)} \delta_{x+x_k(t)-\sqrt{2}t}$, given the event 
$\{ x+ \max_k x_k(t)-\sqrt{2}t  >0\}$, exists and does not depend on $x$. 
Moreover, $x+ \max_k x_k(t)-\sqrt{2}t$ conditionally on the event $\{x+ \max_k x_k(t)-\sqrt{2}t  >0\}$ converges weakly to an exponential random variable. 
\end{prop}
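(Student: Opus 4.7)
The plan is to recast the conditional Laplace functional as a ratio of solutions to the F-KPP equation via McKean's observation, and then exploit a Bramson-type large-deviation asymptotic whose $(a,b)$-dependence cancels between numerator and denominator.

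Fix $\phi \in \mathcal{C}_c(\R)$ non-negative and set $z_t \defi \sqrt{2}t + a\sqrt{t} - b$, so the conditioning event is $A_t \defi \{\max_k x_k(t) > z_t\}$. For any measurable $f:\R\to[0,1]$ the function $v(t,y) \defi \E\!\left[\prod_k f(x_k(t) - y)\right]$ solves F-KPP with initial datum $v(0,y) = f(-y)$. Applying this to $f \in \{e^{-\phi},\, e^{-\phi}\1_{(-\infty,0]},\, \1_{(-\infty,0]}\}$ produces three solutions $v_1, v_2, v_3$, and a direct decomposition yields
\beq\label{planratio}
\Psi_t^{\mathrm{cond}}(\phi) \defi \E\!\left[e^{-\sum_k \phi(x_k(t)-z_t)} \,\Big|\, A_t\right] = \frac{(1-v_2(t,z_t)) - (1-v_1(t,z_t))}{1 - v_3(t,z_t)}.
\eeq

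The central analytic input is a Bramson-type large-deviation asymptotic: for initial data with suitable decay (including the three above) and for $r = z_t - \sqrt{2}t = a\sqrt{t}-b+O(1)$,
\[
1 - v_i(t,\sqrt{2}t+r) = C_i\,\Gamma(t,r)\bigl(1+o(1)\bigr), \qquad t\to\infty,
\]
where the kernel $\Gamma(t,r) \asymp \frac{r}{t^{3/2}}\exp\!\bigl(-\sqrt{2}r-\tfrac{r^2}{2t}\bigr)$ is \emph{universal} (independent of $i$), and each $C_i \ge 0$ depends only on the initial data through a Lalley--Sellke-type integral analogous to the formula for $C(\phi)$ in Proposition \ref{eqn: laplace rep}; monotonicity $v_2 \le v_1$ gives $C_2 \ge C_1$ and $C_3 > 0$. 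Substituting into \eqref{planratio}, the kernel $\Gamma(t,r)$, which carries all of the $(a,b)$-dependence, cancels, giving
\[
\lim_{t\to\infty}\Psi_t^{\mathrm{cond}}(\phi) = \frac{C_2-C_1}{C_3},
\]
manifestly independent of $a, b$. At $\phi \equiv 0$ one has $C_1=0$ and $C_2=C_3$, so the limit equals $1$, and the right-hand side is a bona fide Laplace functional; by Kallenberg's criterion \cite{kallenberg} the conditional law of $\sum_k\delta_{x+x_k(t)-\sqrt{2}t}$ converges to a point process whose distribution does not depend on $x$.

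For the exponential tail of the conditional maximum, apply the same asymptotic to $v_3$ alone: for fixed $s \ge 0$,
\[
\PP\!\left[\max_k x_k(t) > z_t+s \,\Big|\, A_t\right] = \frac{1-v_3(t,z_t+s)}{1-v_3(t,z_t)} \sim \frac{\Gamma(t,r+s)}{\Gamma(t,r)} \longrightarrow e^{-\sqrt{2}s},
\]
since $(r+s)/r \to 1$ and the Gaussian cross-term $(2rs+s^2)/(2t)=O(1/\sqrt{t})$ is negligible for $r\sim a\sqrt{t}$. The principal obstacle is establishing the displayed Bramson asymptotic with a universal kernel $\Gamma$ in the large-deviation window $r\asymp \sqrt{t}$: the basic Bramson convergence theorem (Theorem \ref{bramson_fundamental_convergence}) only controls the moderate regime $r=O(1)$, so one must extend it into this tail and identify the constants $C_i$ with Lalley--Sellke integrals via the Feynman--Kac analysis of \cite{bramson_monograph} and \cite{chauvin_rouault}.
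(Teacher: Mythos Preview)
Your approach is essentially the paper's: both express the conditional Laplace functional as a ratio of F-KPP solutions and invoke the Bramson--Chauvin--Rouault asymptotic in the large-deviation window $r\asymp\sqrt t$ (the paper packages this as Proposition~\ref{bramson_fundamental_interpolation} and Lemma~\ref{not_a}, which give precisely your universal kernel $\Gamma$) so that the $(a,b)$-dependent factor cancels. The only substantive differences are: (i) the paper first introduces a $\delta$-truncation so that all three initial data satisfy \emph{both} conditions of Theorem~\ref{bramson_fundamental_convergence}, whereas your un-truncated $v_1$ has compactly supported $1-v_1(0,\cdot)$ and hence fails condition~2, so you must import convergence of $1-v_1(t,\cdot+m(t))$ from Proposition~\ref{eqn: laplace rep} before the $\psi$-bounds apply; and (ii) you assert but do not prove that the limiting functional is non-degenerate---the paper verifies $C_2>C_1$ (in its notation) by a short contradiction argument using local finiteness of $\lim_t \mathcal E_t$, and you should add the analogous step.
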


\subsection{A picture behind the theorem} \label{heuristics}
Our understanding of BBM stems from results on the genealogies of extremal particles that were obtained in \cite{abk}. 
BBM can be seen as a Gaussian field of correlated random variables on the configuration space $\Sigma_t \defi \{1, \dots, n(t)\}$.
Conditionally upon a realization 
of the branching, the correlations among particles are given by the {\it genealogical distance}
\beq
Q_{ij}(t) \defi \sup\{s \leq t: \; x_i(s) = x_j(s) \}, \quad i,j \in \Sigma_t.
\eeq
The genealogical distance encodes all information on correlations, and in 
particular among extremal particles.
As a first step towards a characterization of the correlation structure at the edge, we derived a characterization of the {\it paths} of extremal particles 
by identifying a mechanism of "entropic repulsion", which we shall recall. 

Let $\alpha \in (0, 1/2)$. The {\it entropic envelope} is the curve 
\beq \label{entropic}
E_{\alpha, t}(s) \defi \frac{s}{t} m(t) - e_{\alpha, t}(s),
\eeq
with 
\beq 
e_{\alpha, t}(s) \defi \begin{cases}
			s^\alpha, & \; 0\leq s \leq t/2, \\
			(t-s)^\alpha, & t/2 \leq s \leq t.
                       \end{cases}
\eeq
For $D\subset \R$, we denote by $\Sigma_t(D)\defi \{k\in \Sigma_t:\; x_k(t) - m(t) \in D \}$ the set of particles 
falling at time $t$ into the subset $D+m(t)$. A first result reads: 
\begin{teor}[\cite{abk}] \label{entropic_paths} For compact $D\subset \R$, 
\[
\lim_{r_d, r_g \uparrow \infty} \lim_{t\uparrow \infty} \PP\left[ \exists k\in \Sigma_t(D) \; \text{such that} \; x_k(s) \geq E_{\alpha, t}(s) \; \text{for some } s\in (r_d, t-r_g) \right] = 0. 
\]
\end{teor}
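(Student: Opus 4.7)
The plan is to bound, by a first-moment argument combined with Bramson's sharp asymptotics for the BBM maximum, the expected number of offending particles --- those in $\Sigma_t(D)$ whose ancestral path exceeds $E_{\alpha,t}$ somewhere in $(r_d, t-r_g)$ --- and then invoke Markov's inequality. The decisive ingredient is Bramson's convergence to the traveling wave, which yields cancellations much sharper than the naive many-to-one bound.

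Fix an intermediate time $s \in (r_d, t-r_g)$ and consider, via the branching Markov property, a particle at height $h \geq E_{\alpha,t}(s)$ at time $s$. The sub-BBM it initiates has length $t-s$, and by Bramson's convergence theorem the probability it produces a descendant in $m(t)+D$ is asymptotically $|D|\,\omega'(m(t)-h-m(t-s))$. A direct computation using $m(t) = \sqrt{2}t - \tfrac{3}{2\sqrt{2}}\log t$ shows $m(t) - E_{\alpha,t}(s) - m(t-s) \approx s^\alpha$ for $s \leq t/2$, so using $\omega'(z) \sim \sqrt{2}\,C\,z\,e^{-\sqrt{2}z}$ as $z \to \infty$ (consequence of \eqref{to_the_right}) the descendant probability is suppressed by $\sqrt{2}\,C\,s^\alpha\,e^{-\sqrt{2}s^\alpha}$ when $h$ is just above the envelope. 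Combining with the many-to-one lemma for the ancestor at time $s$, the per-time contribution to the first moment is bounded by
\[
e^s \int_{E_{\alpha,t}(s)}^{\infty} \phi_s(h) \, \omega'\bigl(m(t)-h-m(t-s)\bigr) \, dh,
\]
and upon writing $h = E_{\alpha,t}(s)+\xi$ one sees that the Gaussian factor contributes $e^{-s+\sqrt{2}s^\alpha-\sqrt{2}\xi}$ while $\omega'$ contributes $e^{-\sqrt{2}(s^\alpha-\xi)}$, so the exponential factors cancel precisely, leaving a polynomial integrand of order $(s^\alpha - \xi)/\sqrt{s}$.

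To avoid overcounting particles that dwell above the envelope for many time units, I decompose the event by the unique first upcrossing time $\tau$ of the ancestral trajectory at $E_{\alpha,t}$ and integrate the Brownian first-passage density at the curve against the descendant probability. The time-reversal symmetry $e_{\alpha,t}(s) = e_{\alpha,t}(t-s)$ allows one to treat the region $s > t/2$ analogously with $r_g$ in place of $r_d$. The exponential cancellation above, together with a power-law gain from the first-passage density, combine to give a bound of the form $C(r_d^{\alpha-1/2} + r_g^{\alpha-1/2})$, which vanishes as $r_d, r_g \to \infty$ because $\alpha < 1/2$.

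\textbf{Main obstacle.} The main technical difficulty is the first-passage estimate at the curved barrier $E_{\alpha,t}$: no closed-form density is available, and one must either invoke Durbin's integral formula for smooth boundaries or, via a Girsanov tilt absorbing the drift $\sqrt{2}$, reduce the problem to the first upcrossing of a $(-\sqrt{2})$-drifted Brownian motion at the curve $-s^\alpha$, controlled by reflection arguments. Making the saddle-point cancellation survive integration against this non-explicit first-passage density, uniformly over $s \in (r_d, t/2)$ and with a quantitative decay in $r_d, r_g$, is the core challenge.
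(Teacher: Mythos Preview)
This theorem is not proved in the present paper: it is quoted from the companion paper \cite{abk} (note the ``\cite{abk}'' in the theorem label) and is used here only as motivation in the heuristics of Section~\ref{heuristics}. There is therefore no proof in this paper to compare your proposal against.

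That said, your outline is in the right spirit for how such results are established. A first-moment computation via many-to-one, combined with a decomposition at the first upcrossing time of the envelope and Bramson-type tail estimates for the descendant sub-BBM, is indeed the natural strategy, and the exponential cancellation you identify --- the $e^s$ from the mean population, the Gaussian cost $e^{-s}e^{\sqrt{2}s^\alpha}$ for reaching the envelope, and the $e^{-\sqrt{2}s^\alpha}$ suppression for the descendant to land in $m(t)+D$ --- is the crux of the matter. Two cautions, however. First, you cannot directly invoke the travelling-wave density $\omega'$ for the descendant probability: Bramson's convergence in Theorem~\ref{bramson_fundamental_convergence} is for the \emph{distribution function} of the maximum, not the density, and in any case what you need is a uniform-in-$s$ upper bound of the form in Lemma~\ref{tight_bounds} (or \cite[Prop.~8.2]{bramson_monograph}), not the asymptotic itself; the argument must be phrased through such non-asymptotic tail bounds. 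Second, and more seriously, you have correctly flagged the main gap: the first-passage density at the curved barrier $E_{\alpha,t}$ is not available in closed form, and controlling it uniformly in $s$ with the required precision is genuine work. Your proposal stops short of resolving this, so what you have is a plausible roadmap rather than a proof.
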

The content of the above is that extremal particles lie typically well below the maximal displacement, namely during the interval 
$(r_d, t-r_g)$. Remark that $r_d, r_g = o(t)$ as $t\to\infty$.

Proposition \ref{entropic_paths} allows to considerably restrict the correlations of particles at the edge. 
\begin{teor}[\cite{abk}] \label{overlaps} For compact $D\subset \R$,
\[
\lim_{r_d, r_g \uparrow \infty} \lim_{t\uparrow \infty} \PP\left[ \exists j, k\in \Sigma_t(D) \; \text{such that} \; Q_{jk}(t) \in (r_d, t-r_g) \right] = 0. 
\]
\end{teor}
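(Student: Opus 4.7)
The plan is to deduce this from Theorem \ref{entropic_paths} together with a moment bound on pairs of extremal particles sharing an ancestor in the intermediate time window. Write $A$ for the event in the statement and $B$ for the exceptional event of Theorem \ref{entropic_paths}; then $\PP(A) \le \PP(B) + \PP(A \cap B^c)$, and $\PP(B) \to 0$ in the prescribed double limit. On $A \cap B^c$, any witness pair $(j,k)$ has its most recent common ancestor at some time $s \in (r_d, t - r_g)$ located at height $y \le E_{\alpha,t}(s)$, and two independent BBM subtrees of duration $\tau := t - s$ issued from $(s, y)$ each place a descendant in $D + m(t)$ at time $t$, with both post-branching paths remaining below $E_{\alpha,t}$ on $(s, t - r_g)$.

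To bound $\PP(A \cap B^c)$ I would apply the many-to-two decomposition of BBM. Up to constants, the expected number of witness pairs is at most
\[
\int_{r_d}^{t - r_g} ds \int_{-\infty}^{E_{\alpha,t}(s)} dy\; e^{s + 2\tau}\, \phi_s(y)\, p_{s, y, \tau}^2 ,
\]
where $\phi_s(y) := (2\pi s)^{-1/2} e^{-y^2/(2s)}$ and $p_{s, y, \tau}$ is the probability that one BBM subtree of duration $\tau$ started at $y$ has a descendant in $D + m(t)$ with its path below $E_{\alpha,t}$ on $(s, t - r_g)$. Since $y \le E_{\alpha,t}(s)$ forces the required displacement $m(t) - y$ to exceed $m(\tau)$ by at least $e_{\alpha,t}(s) - O(\log t)$, Bramson's sharp tail estimate $\PP[\max \text{BBM}_\tau \ge m(\tau) + u] \le C(1 + u_+) e^{-\sqrt{2} u_+}$, supplemented by a ballot-type bound for the envelope constraint, gives a factor $e^{-\sqrt{2}\, e_{\alpha,t}(s)}$ per subtree, so $p_{s,y,\tau}^2$ contributes $e^{-2\sqrt{2}\, e_{\alpha,t}(s)} \cdot e^{2\sqrt{2} y}$ times polynomial factors.

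The Gaussian integration in $y$ is the technical heart. Completing the square in $\phi_s(y)\, e^{2\sqrt{2} y}$ recenters the Gaussian at $y_* = 2\sqrt{2} s$, so the cap $y \le E_{\alpha,t}(s) \approx \sqrt{2} s - e_{\alpha,t}(s)$ sits roughly $\sqrt{2s} + e_{\alpha,t}(s)/\sqrt{s}$ standard deviations to the left of $y_*$. The resulting Gaussian tail penalty $e^{-s - \sqrt{2}\, e_{\alpha,t}(s)}$ cancels the divergent $e^{s}$ piece of the many-to-two prefactor, leaving an integrand of the shape $\mathrm{poly}(t)\cdot e^{-\sqrt{2}\, e_{\alpha,t}(s)}/\sqrt{\tau(t+s)}$. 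The residual polynomial blow-up in $t$ is then absorbed by refining $p_{s,y,\tau}$ with ballot corrections encoding the two independent post-branching paths staying below the envelope; the remaining estimate vanishes as $r_d, r_g \to \infty$ uniformly in $t$, since $\int_r^\infty e^{-\sqrt{2} s^\alpha}\, ds \to 0$.

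The main obstacle is precisely this quantitative integration. Without the envelope constraint, the many-to-two expression is exponentially large in $t$ — reflecting that typical pairs of particles in $D + m(t)$ share a very early ancestor — and the vanishing of $\PP(A \cap B^c)$ relies on two intertwined mechanisms: (i) the cap $y \le E_{\alpha,t}(s)$ at the branching time, whose Gaussian tail exactly compensates the many-to-two factor $e^{s + 2\tau - M^2/(t+s)}$; and (ii) sharp ballot-type bounds for the two independent post-branching paths, absorbing the leftover polynomial-in-$t$ factors. Matching (i) and (ii) uniformly across the two symmetric regimes $s < t/2$ (where $e_{\alpha,t}(s) = s^\alpha$) and $s > t/2$ (where $e_{\alpha,t}(s) = \tau^\alpha$), and in particular handling the marginal behavior as $s \to t/2$ from either side, is the delicate technical work.
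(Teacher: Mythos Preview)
The paper does not prove this statement; it is quoted from \cite{abk} and appears only in the heuristics Section~\ref{heuristics} as background for the picture in Figure~1. So there is no in-paper proof to compare against.

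As for your proposal itself: the strategy---localize to paths below the entropic envelope via Theorem~\ref{entropic_paths}, then run a many-to-two moment bound on pairs whose most recent common ancestor falls in $(r_d, t-r_g)$---is indeed the route taken in \cite{abk}. But what you have written is a sketch, not a proof, and you say so yourself: the ``main obstacle'' (the quantitative $y$-integration and the matching of Gaussian-tail and ballot-type corrections across the regimes $s<t/2$ and $s>t/2$) is described rather than carried out. The assertion that ballot corrections ``absorb the leftover polynomial-in-$t$ factors'' is precisely the hard step, and nothing in the outline verifies it; getting the exponents to close uniformly in $s$ and $t$ is where all the work in \cite{abk} lies. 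A smaller point: your Gaussian integration paragraph drops the $e^{2\tau}$ from the many-to-two prefactor when accounting for cancellations, and the heuristic ``$e^{-s - \sqrt 2\, e_{\alpha,t}(s)}$ cancels $e^s$'' does not by itself control $e^{2\tau}$; that piece is handled by the $e^{-\sqrt 2(m(t)-y)}$ inside $p_{s,y,\tau}$, and the bookkeeping there needs to be made explicit before one can see whether the residual is merely polynomial.
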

According to this theorem, extremal particles can be split into groups of "weakly dependent" random variables (those particles with most recent 
commen ancestors in $[0,r_d]$, 
or in groups of "heavily dependent" random variables (those particles with
 most recent common ancestors  in $[t-r_g, t]$). The image which emerges from Theorems 
\ref{entropic_paths} and \ref{overlaps} is depicted in Figure $1$ below. 

\begin{figure}[ht] \label{fig.1}
\begin{center}
  \input{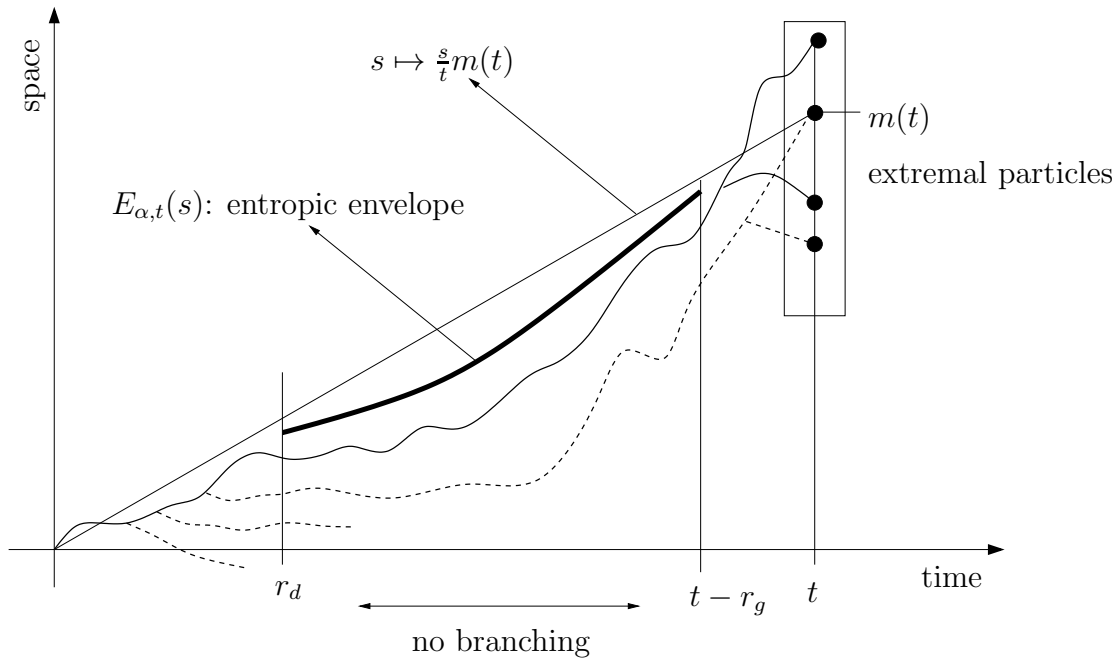}
\end{center}
\caption{Genealogies of extremal particles 
}
\end{figure}

Hence, branching happens at the very beginning, after which particles continue along {\it independent} paths, and start branching again only towards the end.
It is not difficult to see \cite{lalley_sellke, abk_poissonian}  that the branching at the beginning is responsible for the appearance of the derivative martingale in the large time limit.
On the other hand, the branching towards the end creates the clusters. 
By Theorem \ref{entropic_paths} these are branching Brownian motions of length $r_g$ which have to perform displacements of order (at least) $\sqrt{2} r_g$; 
in fact, the displacements must be even bigger, in order to compensate for the "low" heights of the ancestors at time $t-r_g$.
According to the picture, and conditionally on what happened up to time $r_d$ (the initial branching) particles at time 
$t-r_g$ which have offsprings in the lead at the future time $t$ have different ancestors at time $r_d$. Thus, one may expect that the point process of such ancestors   should be ("close to") a {\it Poisson point process}; as such, only its density should matter.  It is not difficult to see that the density of particles at time $t-r_g$ whose paths remain below the entropic envelope during all of the interval $(r_d, t-r_g)$
is in first approximation (and conditionally on the initial branching) of the form $- x \ee^{-\sqrt{2} x} \dd x$, for $x$ large on the negative, in agreement
with Theorem \ref{main_theorem}. This heuristics stands behind the existence of the Poisson cluster process representation.  

The phenomenon described above is however more delicate than it might appear at first reading. This is in particular due to the following intriguing fact. Namely, the feature whether a particle alive at time $t-r_g$ has an offspring which makes it to the lead at the future time $t$ is evidently not measurable w.r.t. the $\sigma$-field generated up to time $t-r_g$. In other words: {\it at any given time, leaders are offsprings of ancestors which are chosen 
according to whether their offsprings are leaders at the considered time!} We shall provide below yet another point of view on the issue which again suggests that such an intricate random thinning  based on the future evolution
eventually leads to the existence of the Poisson cluster process representation given in Theorem \ref{main_theorem}. 

As it turns out, the Poisson cluster representation is of relevance for the study of spin glasses \cite{Bo05, talagrand}, in particular within the frame of competing particle systems [CPS], a wide-ranging set of ideas which address the statistical properties of extremal processes, as pursued by Aizenman and co-authors \cite{aizenman_arguin,  aizenman_sims_starr2, aizenman_ruzmaikina}. The CPS-approach can be seen as an attempt to formalize to so-called {\it cavity method} developed by Parisi and co-authors \cite{parisi} for the study of spin glasses. The connection with Theorem \ref{main_theorem} is as follows.  

Let ${\mathcal E} =\sum_{i\in\N}\delta_{e_i}$ be the limiting extremal process of a BBM starting at zero. 
It is clear since $m(t) = m(t-s) + \sqrt{2} s + o(1)$ that the law of ${\mathcal E}$ satisfies the following invariance property.
For any $s\geq 0$,
\beq \label{cavity_invariance_bbm}
{\mathcal E} \law \sum_{i,k} \delta_{e_i+ x_k^{(i)}(s)-\sqrt{2} s}
\eeq
where $\{x_k^{(i)}(s); k\leq n^{(i)}(s)\}_{i\in \N}$ are iid BBM's.
 On the other hand, by Theorem \ref{main_theorem}, 
\beq \label{cavity_step_one}
{\mathcal E} \law \lim_{s\to \infty}  \sum_{i,k} \delta_{\eta_i + x_k^{(i)}(s) - \sqrt{2}s }\ ,
\eeq
where now $(\eta_i; i\in \N)$ are the atoms of a PPP$\left(\sqrt{\frac{2}{\pi}}-x \ee^{-\sqrt{2}x}dx\right)$ shifted by $\frac{1}{\sqrt{2}} \log Z$. 

The main idea of the CPS-approach is to characterize extremal processes as invariant measures under suitable, model-dependent stochastic mappings. 
In the case of branching Brownian motion, the mapping consists of adding to each ancestors $e_i$ independent BBMs with drift $-\sqrt{2}$. 
This procedure randomly picks of the original ancestors only those with offsprings in the lead at some future time. But the random 
thinning is performed through independent random variables: this suggests that one may  indeed replace the process of ancestors $\{e_i\}$ by 
a Poissonian process with suitable density. 

Behind the random thinning, a crucial phenomenon of "energy vs. entropy" is at work. Under the light of \eqref{drifting_off}, the probability that any such BBM with drift $-\sqrt{2}$ attached to the process of ancestors does not wander off to $-\infty$ vanishes in the limit of large times. On the other hand, the higher the position of the ancestors, the fewer one finds. A delicate balance must therefore be met, and only ancestors lying on a precise level below the lead can survive the random thinning. This is indeed the content of Proposition \ref{squareroot_prop}, and a fundamental ingredient in the CPS-heuristics: particles at the edge come from the tail in the past. A key step in identifying the equilibrium measure is thus to identify the tail. In the example of BBM, we are able to show, perhaps indirectly, that a good approximation of the tail is a Poisson process with density $ -x \ee^{-\sqrt{2}x}dx$ (up to constant). \\

To make  the above heuristics rigorous we rely on the analytic approach 
pioneered by Bramson, which highlights once more the power of the connection
between BBM and the F-KPP equations. It may still be interesting to have 
a purely probabilistic proof, exploiting the detailed analysis of the
path-properties of extremals particles. This represents, however, major technical challenges. \\

\section{Proofs} \label{proofs}
In what follows, $\{x_k(t),\, k\leq n(t)\}$ will always denote a branching Brownian motion of length $t$ started in zero. 
%Moreover, we will shorten 
%\beq
%\overline{x}_k(t) \defi x_k(t) -m(t), \quad \mathcal{E}_t \defi \max_{k\leq n(t)} \overline{x}_k(t).
%\eeq

\subsection{Technical tools}
We start by stating two fundamental results that will be used extensively. 
First, McKean's insightful observation:
 
\begin{lem}[\cite{mckean}] \label{mckean_fundamental} Let $f: \R \to [0,1]$ and $\{x_k(t): k\leq n(t)\}$ a branching Brownian motion starting at $0$.
The function
\[
u(t,x) = \E\left[ \prod_{k=1}^{n(t)} f(x+x_k(t))\right]
\]
is solution of the F-KPP equation \eqref{kpp_equation} with $u(0,x)= f(x)$. 
\end{lem}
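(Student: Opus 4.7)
The plan is a one-step renewal argument: condition on the time $T$ of the first branching event and on the number of offspring produced, then use the branching property of BBM together with the strong Markov property of Brownian motion to derive an integral (mild) equation for $u(t,x)$, which is then recognized as the Duhamel reformulation of the F-KPP equation. The initial condition is immediate: at $t=0$ there is a single particle at the origin, so $u(0,x) = f(x)$.

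For the renewal step, write $F(u) \defi \sum_{k=1}^\infty p_k u^k$, which is a well-defined $C^1$ function on $[0,1]$ since $\sum k p_k = 2 < \infty$. Condition on the first holding time $T$, which is exponential with rate $1$. On $\{T > t\}$, which has probability $e^{-t}$, the population consists of a single particle at $B_t$, where $B$ is standard Brownian motion, contributing $e^{-t}\E[f(x+B_t)]$ to $u(t,x)$. On $\{T = s \leq t\}$, conditionally on $B_s$ and on the event of producing $k$ offspring (probability $p_k$), the $k$ subtrees evolve as independent BBMs of length $t-s$ started at $B_s$. The product therefore factorizes, and taking conditional expectations gives $u(t-s, x+B_s)^k$ per subtree. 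Integrating over $T$ and summing over $k$ yields
\beq \label{mild-kpp-plan}
u(t,x) = e^{-t}\,\E\bigl[f(x+B_t)\bigr] + \int_0^t e^{-s}\, \E\bigl[F\bigl(u(t-s,\,x+B_s)\bigr)\bigr]\, ds.
\eeq

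Let $P_s$ denote the heat semigroup with generator $\tfrac{1}{2}\partial_{xx}$, so that $\E[g(x+B_s)] = (P_s g)(x)$. Then \eqref{mild-kpp-plan} reads $u(t,\cdot) = e^{-t} P_t f + \int_0^t e^{-s} P_s\bigl[F(u(t-s,\cdot))\bigr]\, ds$, which is precisely the mild (variation-of-constants) formulation of the semilinear parabolic equation $u_t = \tfrac{1}{2} u_{xx} - u + F(u)$. Differentiating in $t$ (after the change of variable $r = t-s$, which pulls the $t$-dependence out of the integrand) and applying the semigroup identity $\partial_t P_t g = \tfrac{1}{2} \partial_{xx} P_t g$ recovers the F-KPP equation as stated. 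The last expected obstacle is purely analytic: one must justify differentiation under the integral sign and the interchange of $\partial_{xx}$ with the expectation, but since $0 \leq u \leq 1$ bounds everything uniformly, $F$ is $C^1$ on $[0,1]$, and the heat semigroup provides arbitrary spatial smoothing for $s > 0$, this is routine and does not affect the structure of the argument.
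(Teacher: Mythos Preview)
Your renewal argument is correct and is precisely McKean's original derivation. Note, however, that the paper does not actually supply a proof of this lemma: it is stated with a citation to \cite{mckean} and used as a black box, so there is nothing in the paper to compare your argument against. What you have written is the standard proof, and the only remark worth adding is that the paper's introduction already alludes to this mechanism when it says ``a renewal argument shows that $u(t,x)$ solves the Kolmogorov--Petrovsky--Piscounov or Fisher [F-KPP] equation'' just before \eqref{kpp_equation}; your proposal simply carries this out in detail for general $f$ rather than the Heaviside initial condition.
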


Second, the fundamental result by Bramson on the convergence of solutions of the F-KPP equation to travelling waves: 

\begin{teor}[Theorem A \cite{bramson_monograph}] \label{bramson_fundamental_convergence}
 Let $u$ be solution of the F-KPP equation \eqref{kpp_equation} with $0\leq u(0,x) \leq 1$. Then 
\beq \label{kpp_general_form}
u(t, x+m(t)) \to \omega(x), \quad \text{uniformly in} \; x\; \text{as} \; t\to \infty,
\eeq
where $\omega$ is the unique solution (up to translation) of
\[
\frac{1}{2} w'' + \sqrt{2} \omega' + \omega^2 - \omega = 0,
\]
if and only if 
\begin{itemize}
\item[1.] for some $h>0$, $\limsup_{t\to \infty} \frac{1}{t} \log \int_t^{t(1+h)} (1-u(0,y)) \dd y \leq -\sqrt{2}$;
\item[2.] and for some $\nu>0$, $M>0$, $N>0$, $\int_{x}^{x+N} (1-u(0,y)) \dd y > \nu$ for all $x\leq - M$. 
\end{itemize}
Moreover, if $\lim_{x\to \infty} \ee^{b x}(1-u(0,x)) = 0$ for some $b> \sqrt{2}$, then one may choose
\beq \label{choice_m} 
m(t) = \sqrt{2} t -\frac{3}{2\sqrt{2}} \log t.
\eeq
\end{teor}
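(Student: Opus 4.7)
The plan is to combine the probabilistic representation of $u$ via McKean's formula (Lemma \ref{mckean_fundamental}) with a careful PDE-analytic reduction to the linearized equation. First, set $\psi(t,x) = 1 - u(t,x)$; a direct computation shows that $\psi$ satisfies a nonlinear PDE whose linearization about $\psi = 0$ reads $\psi_t = \tfrac{1}{2}\psi_{xx} + \psi$, since $f(u) = \sum_k p_k u^k - u$ has $f'(1) = \sum_k k p_k - 1 = 1$. The linearized equation admits exponential solutions $e^{-\lambda(x - c(\lambda)t)}$ with $c(\lambda) = (\lambda^2 + 2)/(2\lambda)$; the minimal speed $c^* = \sqrt{2}$ is attained uniquely at $\lambda^* = \sqrt{2}$. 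This identifies $\sqrt{2}$ as the natural wave speed and pins down the critical decay rate to be imposed on $\psi(0,\cdot)$ on the right.

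Necessity of the two conditions is then checked directly. Condition 1 encodes that $1-u(0,\cdot)$ has at least the critical decay $e^{-\sqrt{2}y}$ in an averaged sense on the right; a slower decay would push the wave to a supercritical speed, contradicting the centering $m(t) \sim \sqrt{2} t$. Condition 2 is a non-degeneracy requirement preventing $u(0,\cdot)$ from being indistinguishable from $1$ on the left, and it guarantees that the limit $\omega$ is not identically $1$.

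For sufficiency, the core of the argument is a truncation-plus-comparison scheme. Since $f(u)/((1-u)) \to 1$ as $u\to 1$ and $f(u) \le 1-u$, one sandwiches $\psi$ between solutions of the pure linear equation and of the same equation with an absorbing barrier that kills mass where $u$ has moved far from $1$. The maximum principle upgrades these pointwise comparisons to global bounds. For the linear problem, the Feynman--Kac representation yields
\[
\psi^{\text{lin}}(t, x) = e^{t}\, \E_x\!\left[ \psi(0, B_t)\, \1_{\{B_s > \gamma_s,\ 0 \le s \le t\}} \right],
\]
for a suitably chosen absorbing boundary $\gamma_s$. Re-centering at $m(t) = \sqrt{2}t - \tfrac{3}{2\sqrt{2}} \log t$ and conditioning on the endpoint turns the right-hand side into a Brownian bridge expectation of the probability of staying below a line of slope $\sqrt{2}$ over $[0,t]$. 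The classical estimate that such a bridge survival probability is of order $t^{-3/2}$ (by the ballot/Bachelier formula) produces exactly the prefactor whose logarithm cancels against the shift $\tfrac{3}{2\sqrt{2}} \log t$, yielding the non-degenerate limit $\omega(x)$.

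The hard part is the quantitative control on the nonlinear error. One must show that the discrepancy between $\psi$ and its linear majorant/minorant is uniformly of lower order on the moving window $x + m(t)$; this requires Bramson's delicate Green's-function estimates for the heat equation with absorption at a curved moving barrier, plus a second absorbing barrier introduced on the left to handle the saturation of $u$ at $1$. Once uniform convergence of $u(t, x+m(t))$ along subsequences is established, the uniqueness up to translation of bounded solutions of \eqref{wave_pde} (itself obtained via sliding-plane arguments on the ODE) identifies the limit as a specific $\omega$; conditions 1 and 2 together fix the translate. Finally, the improved centering \eqref{choice_m} under the stronger tail hypothesis $\lim_{x\to\infty} e^{bx}(1 - u(0,x)) = 0$ for some $b > \sqrt{2}$ follows from the observation that subcritical exponential initial data are dominated by the critical linear problem without additional logarithmic shift, so that the Bachelier correction appears in its canonical form and no further correction is needed.
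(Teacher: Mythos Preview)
The paper does not prove this statement at all: it is quoted verbatim as ``Theorem A'' from Bramson's monograph \cite{bramson_monograph} and used as a black box throughout. There is therefore no ``paper's own proof'' to compare against.

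Your write-up is not a proof but a high-level narrative of Bramson's strategy. The broad strokes are accurate --- linearization at $u=1$, the critical exponent $\lambda^*=\sqrt{2}$, the Feynman--Kac representation with an absorbing barrier, the $t^{-3/2}$ ballot-type asymptotics that generate the $\tfrac{3}{2\sqrt{2}}\log t$ correction, and comparison via the maximum principle --- and these are indeed the ingredients of \cite{bramson_monograph}. But the passage ``this requires Bramson's delicate Green's-function estimates for the heat equation with absorption at a curved moving barrier'' is doing essentially all of the work: that sentence summarizes roughly 150 pages of analysis, including the construction of the curves $\overline{\mathcal{M}}$ and $\underline{\mathcal{M}}$ (which the present paper invokes in Proposition \ref{bramson_fundamental_interpolation}), the iterated bootstrap that tightens the bounds $\gamma(r)\downarrow 1$, and the precise handling of the initial-data tail that makes condition 1 both necessary and sufficient. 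None of this is supplied. Similarly, ``uniqueness up to translation \dots via sliding-plane arguments'' is asserted rather than carried out; and your treatment of necessity is purely heuristic (``a slower decay would push the wave to a supercritical speed'').

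In short: as a summary of why the theorem is true your text is reasonable, but it is not a proof, and given that the paper itself merely cites the result, the appropriate response here is to do the same --- state the theorem and attribute it to \cite{bramson_monograph} --- rather than to attempt a self-contained argument.
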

It is to be noted that the necessary and sufficient conditions hold for uniform convergence in $x$. 
Pointwise convergence could hold when, for example, condition 2 is not satisfied This is the case in Theorem \ref{teor_existence}.

It will be often convenient to consider the F-KPP equation in the following form: 
\beq \label{kpp_one_minus}
u_t = \frac{1}{2} u_{xx} + u - \sum_{k=1}^\infty p_k u^k.
\eeq
The solutions to \eqref{kpp_equation} and \eqref{kpp_one_minus} are simply related via the transformation $u \to 1-u$.
The following Proposition provides sharp approximations to the solutions of such equations.

\begin{prop}
\label{bramson_fundamental_interpolation}
Let $u$ be a solution to the F-KPP equation \eqref{kpp_one_minus} with initial data satisfying
$$
\int_0^\infty y \ee^{\sqrt{2}y}u(0,y)dy<\infty\ ,
$$
and such that $u(t,\cdot+m(t))$ converges.  Define
\beq \label{psi_no_m}
\psi(r,t,X+\sqrt{2t})\defi \frac{e^{-\sqrt{2} X}}{\sqrt{2\pi(t-r)}}\int_0^\infty u(r,y'+\sqrt{2}r)e^{y'\sqrt{2}}e^{-\frac{(y'-X)^2}{2(t-r)}}(1-e^{-2y'\frac{(X+\frac{3}{2\sqrt{2}}\log t)}{t-r}})dy'
\eeq 
Then for $r$ large enough, $t\geq 8r$, and $X\geq 8r - \frac{3}{2\sqrt{2}}\log t$, 
\beq \label{upper_lower_approx}
\gamma^{-1}(r) \psi(r,t,X+\sqrt{2t}) \leq u(t, X+\sqrt{2} t) \leq \gamma(r) \psi(r,t,X+\sqrt{2t}),
\eeq
for some $\gamma(r)\downarrow 1$ as $r\to \infty$. 
\end{prop}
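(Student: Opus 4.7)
The statement is a version of the cornerstone interpolation estimate of Bramson (Proposition~8.3 and surrounding analysis in \cite{bramson_monograph}), and I would follow the same Feynman--Kac strategy, adapted to the initial data assumed here. The argument has three ingredients: a reduction of the semilinear equation to a tilted Gaussian integral via Feynman--Kac; a Brownian bridge reflection computation which produces the barrier factor $1-\ee^{-2y'(X+\frac{3}{2\sqrt{2}}\log t)/(t-r)}$; and a uniform control of the residual nonlinear correction yielding the prefactor $\gamma(r)\downarrow 1$.

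Rewrite \eqref{kpp_one_minus} as $u_t=\tfrac{1}{2}u_{xx}+(1-k(t,x))u$ with $k(t,x)\defi u(t,x)^{-1}\sum_{k\ge 1}p_k u(t,x)^k\in[0,1]$, so that Feynman--Kac gives, for $0<r<t$,
\begin{equation*}
u(t,x)=\E_x\!\left[u(r,B_{t-r})\,\exp\!\left(\int_0^{t-r}(1-k(t-s,B_s))\,\dd s\right)\right].
\end{equation*}
Setting $x=X+\sqrt{2}\,t$ and $y'=B_{t-r}-\sqrt{2}\,r$, the square-completion identity
\begin{equation*}
\tfrac{\ee^{t-r}}{\sqrt{2\pi(t-r)}}\,\ee^{-(X+\sqrt{2}(t-r)-y')^2/2(t-r)}=\tfrac{\ee^{-\sqrt{2}X}\,\ee^{\sqrt{2}y'}}{\sqrt{2\pi(t-r)}}\,\ee^{-(y'-X)^2/2(t-r)}
\end{equation*}
absorbs the linear growth factor into the tilted Gaussian weight, producing exactly the Gaussian part of $\psi$. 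The bridge factor arises by splitting the Feynman--Kac expectation according to whether the trajectory stays below the lead curve $s\mapsto\sqrt{2}\,s+X+\tfrac{3}{2\sqrt{2}}\log t$ throughout $[0,t-r]$ or not. Conditionally on the Gaussian bridge from $0$ to $X$ of the centred motion, the probability of the former event is precisely $1-\ee^{-2y'(X+\frac{3}{2\sqrt{2}}\log t)/(t-r)}$ by the classical reflection formula (valid for $y'>0$), and the $y'\le 0$ contribution is absorbed into the error by means of the integrability of $u(r,\cdot+\sqrt{2}r)\ee^{\sqrt{2}\cdot}$ inherited from the hypothesis on the initial data.

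The delicate step is the third one: showing that the residual Feynman--Kac factor $\E[\exp(-\int_0^{t-r}k(t-s,B_s)\,\dd s)\mid\cdot]$ lies in $[\gamma(r)^{-1},\gamma(r)]$ uniformly for $t\ge 8r$ and $X\ge 8r-\tfrac{3}{2\sqrt{2}}\log t$. Along trajectories confined below the lead curve $u(t-s,B_s)$ is small and $k\le Cu$, so what has to be estimated is the expected bridge-integral of $u$. Here I would invoke Bramson's a priori tail bound $u(r,y+m(r))\lesssim y\,\ee^{-\sqrt{2}y}$, which follows from a maximum-principle comparison of $u$ with a super-solution of the linearised equation fed by the initial data, the hypothesis $\int_0^\infty y\,\ee^{\sqrt{2}y}u(0,y)\,\dd y<\infty$ being precisely what is needed to start that comparison. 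Combined with standard bridge estimates, this yields a bound of the form $C/\sqrt{r}$ on the expected integral of $k$, uniform in $(t,X)$ over the prescribed range; setting $\gamma(r)\defi \ee^{C/\sqrt{r}}$ and using $1-x\le\ee^{-x}\le 1$ together with $\ee^{-x}\ge 1-x$ then yields the sandwich \eqref{upper_lower_approx}.

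The main obstacle I anticipate is exactly this uniform a priori control of $u$ at every intermediate time and along every relevant Brownian trajectory; this is the reason for the buffer $t\ge 8r$ and $X\ge 8r-\tfrac{3}{2\sqrt{2}}\log t$, which keeps the bridges of length $t-r$ safely below the barrier at height $X+\tfrac{3}{2\sqrt{2}}\log t$ and makes the bridge-integral estimates robust. Once this uniform tail bound is in hand, the rest of the argument amounts to a careful tracking of Gaussian factors and of the reflection term, exactly as in \cite{bramson_monograph}.
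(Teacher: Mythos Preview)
Your proposal sketches a direct Feynman--Kac derivation, whereas the paper's proof is entirely a black-box citation of Bramson's Proposition~8.3: it quotes the two-sided bound $\psi_1\le u\le\psi_2$ (with $\psi_1,\psi_2$ defined via Brownian bridges constrained by certain nonlinear barriers $\overline{\mathcal M}_{r,t}^x$, $\underline{\mathcal M}_{r,t}'$) together with the ratio estimate $\psi_2/\psi_1\le\gamma(r)$, observes that the linear-barrier version $\widehat\psi$ is sandwiched between $\psi_1$ and $\psi_2$ by monotonicity of the barrier, and then computes $\widehat\psi=\psi$ via the explicit reflection formula for a Brownian bridge staying below a straight line. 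No part of the Feynman--Kac analysis is redone.

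Your outline is the right architecture for reproving Bramson's input, but two points are oversimplified. First, a single ``lead curve'' does not suffice: the upper and lower bounds on $u$ require \emph{different} barrier curves (this is precisely why Bramson introduces the pair $\overline{\mathcal M}_{r,t}^x,\underline{\mathcal M}_{r,t}'$), because for the upper bound one must also control the contribution of trajectories that exceed the barrier, not just discard them, while for the lower bound one restricts to good trajectories and must bound $\int k$ from above on that event. Your symmetric ``split and absorb the complement into the error'' does not handle the upper bound. Second, the assertion that the expected bridge-integral of $k$ is $O(r^{-1/2})$ uniformly in $(t,X)$ is exactly the content of several chapters of \cite{bramson_monograph}; the a~priori bound $u(s,y+m(s))\lesssim y\,\ee^{-\sqrt 2 y}$ alone is not enough, since the bridge spends time at heights where $u$ is of order one, and one needs the full barrier machinery (and the $t\ge 8r$, $X\ge 8r-\tfrac{3}{2\sqrt 2}\log t$ buffers) to show this contribution is negligible. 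So your plan is correct as a summary of Bramson's strategy, but the ``delicate step'' you flag is substantially harder than your sketch indicates, and the paper sensibly just cites the finished result.
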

With the notations from the above Proposition, the function $\psi$ thus fully captures the large space-time behavior of the solution to the F-KPP equations.
We will make extensive use of \eqref{upper_lower_approx}, mostly when both $X$ and $t$ are large in the positive, in which case the dependence
on $X$ becomes particularly easy to handle. 
\begin{proof}[Proof of Proposition \ref{bramson_fundamental_interpolation}]
For $T>0$ and $0<\alpha< \beta <\infty$, let $\{\mathfrak z_{\alpha, \beta}^T(s), 0\leq s\leq T\}$ 
denote a Brownian bridge of length $T$ starting in $\alpha$ and ending in $\beta$. 

It has been proved by Bramson (see \cite[Proposition 8.3]{bramson_monograph}) that for $u$ satisfying the assumptions in the Proposition \ref{bramson_fundamental_interpolation}, the following holds:
\begin{enumerate}
\item for $r$ large enough, $t\geq 8r$ and $x\geq m(t)+8r$
$$
u(t,x)\geq C_1(r) e^{t-r}\int_{-\infty}^\infty u(r,y) \frac{e^{-\frac{(x-y)^2}{2(t-r)}}}{\sqrt{2\pi(t-r)}}\PP\left[\mathfrak{z}^{t-r}_{x,y}(s)>\overline{\mathcal{M}}_{r,t}^x(t-s),s\in[0,t-r]\right]dy
$$
and
$$
u(t,x)\leq C_2(r) e^{t-r}\int_{-\infty}^\infty u(r,y) \frac{e^{-\frac{(x-y)^2}{2(t-r)}}}{\sqrt{2\pi(t-r)}}\PP\left[\mathfrak{z}^{t-r}_{x,y}(s)>\underline{\mathcal{M}}_{r,t}'(t-s),s\in[0,t-r]\right]dy
$$
where the functions $\overline{\mathcal{M}}_{r,t}^x(t-s)$, $\underline{\mathcal{M}}_{r,t}'(t-s)$ satisfy 
$$
\underline{\mathcal{M}}_{r,t}'(t-s)\leq n_{r,t}(t-s)\leq \overline{\mathcal{M}}_{r,t}^x(t-s) \ ,
$$
for $n_r(s)$ being the linear interpolation between $\sqrt{2}{r}$ at time $r$ and $m(t)$ at time $t$.
Moreover, $C_1(r)\uparrow 1$, $C_2(r)\downarrow 1$ as $r\to\infty$.
\item If $\psi_1(r,t,x)$ and $\psi_2(r,t,x)$ denote respectively the lower and upper bound to $u(t,x)$, we have
$$
1\leq \frac{\psi_2(r,t,x)}{\psi_1(r,t,x)}\leq \gamma(r)
$$
where $\gamma(r)\downarrow 1$ as $r\to\infty$.
\end{enumerate}
Hence, if we denote by 
$$
\widehat \psi(r,t,x)=e^{t-r}\int_{-\infty}^\infty u(r,y) \frac{e^{-\frac{(x-y)^2}{2(t-r)}}}{\sqrt{2\pi(t-r)}}\PP(\mathfrak{z}^{t-r}_{x,y}(s)>n_{r,t}(t-s),s\in[0,t-r])dy \ , 
$$
we have by domination $\psi_1\leq \widehat \psi\leq \psi_2$. Therefore, for $r,t$ and $x$ large enough
\begin{equation}
\label{eqn: bound1}
\frac{u(t,x)}{\widehat \psi(r,t,x)}\leq \frac{\psi_2(r,t,x)}{\widehat \psi(r,t,x)}\leq \frac{\psi_2(r,t,x)}{\psi_1(r,t,x)}\leq \gamma(r)\ , 
\end{equation}
and
\begin{equation}
\label{eqn: bound2}
\frac{u(t,x)}{\widehat \psi(r,t,x)}\geq \frac{1}{\gamma(r)}\ .
\end{equation}
Combining \eqref{eqn: bound1} and \eqref{eqn: bound2} we thus get
\beq \label{sequence_of_less}
\gamma^{-1}(r) \widehat \psi(r,t,x) \leq u(t,x) \leq \gamma(r) \widehat \psi(r,t,x).
\eeq
We now consider $X \geq 8r - \frac{3}{2\sqrt{2}} \log t$, and obtain from \eqref{sequence_of_less} that
\beq \label{sequence_of_less_new}
\gamma^{-1}(r) \widehat \psi(r,t,X+\sqrt{2}t) \leq u(t,X+\sqrt{2}t) \leq \gamma(r) \widehat \psi(r,t,X+\sqrt{2}t).
\eeq
The probability involving the Brownian bridge in the definition of $\widehat \psi$ can be explicitly computed.
The probability of a Brownian bridge of length $t$ to remain below the interpolation of $A>0$ at time $0$ and $B>0$ at time $t$
is $1-e^{-2AB/t}$, see e.g. \cite{scheike}. In the above setting the length is $t-r$, $A= \sqrt{2}t+x-m(t)=x + \frac{3}{2\sqrt{2}}\log t>0$ for $t$ large enough and $B=y-\sqrt{2}r=y'$. Using this, together with the fact that $\PP(\mathfrak{z}^{t-r}_{x,y}(s)>n_{r,t}(t-s),s\in[0,t-r])$ is $0$ for $B=y'<0$, and by change of variable $y'=y+\sqrt{2}t$ in the integral appearing in the definition of $\widehat \psi$, we get 
\beq \bea
\widehat \psi(r,t,X+\sqrt{2}t) &=\frac{e^{-\sqrt{2} X}}{\sqrt{2\pi(t-r)}}\int_0^\infty u(r,y'+\sqrt{2}r)e^{y'\sqrt{2}}e^{-\frac{(y'-X)^2}{2(t-r)}}(1-e^{-2y'\frac{(X+\frac{3}{2\sqrt{2}}\log t)}{t-r}})dy' \\
& = \psi(r,t, X+\sqrt{2}t).
\eea \eeq
This, together with \eqref{sequence_of_less},
concludes the proof of the proposition. 
\end{proof}

The bounds in \eqref{upper_lower_approx} have been used by Chauvin and Rouault to compute the 
probability of deviations of the maximum of BBM, see Lemma 2 \cite{chauvin_rouault}. 
Their reasoning applies to solutions of the F-KPP equation with other initial conditions than those corresponding to the maximum.
We give the statement below, and reproduce Chauvin and Rouault's proof in a general setting for completeness.
\begin{prop} \label{cr_our_setting} 
Let the assumptions of Proposition \ref{bramson_fundamental_interpolation} be satisfied, and 
assume furthermore that $y_0= \sup\{y: u(0,y) >0 \}$ is finite. Then, 
$$
\lim_{t\to\infty }e^{x\sqrt{2}}\frac{t^{3/2}}{\log t} \psi(r,t,x+\sqrt{2t})=\frac{3}{2\sqrt{\pi}}\int_0^\infty ye^{y\sqrt{2}}u(r,y+\sqrt{2}r)dy
$$
Moreover, the limit of the right-hand side exists as $r\to\infty$, and it is positive and finite.
\end{prop}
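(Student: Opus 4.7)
The plan is to compute the $t\to\infty$ limit of $e^{\sqrt{2}x}\frac{t^{3/2}}{\log t}\psi(r,t,x+\sqrt{2}t)$ directly from \eqref{psi_no_m} by dominated convergence, and then derive the existence of the $r\to\infty$ limit of the right-hand side via a squeeze argument based on the sandwich \eqref{upper_lower_approx}.

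For the first claim, after substituting $X=x$ in \eqref{psi_no_m} and cancelling the $e^{-\sqrt{2}x}$ prefactor, the pointwise asymptotics of the integrand are elementary: the Gaussian factor $e^{-(y'-x)^2/2(t-r)}\to 1$, and since $x$ is bounded while $\frac{3}{2\sqrt{2}}\log t\to\infty$,
\[
1 - \exp\!\left(-\tfrac{2y'(x+\frac{3}{2\sqrt{2}}\log t)}{t-r}\right) \;=\; \tfrac{3 y' \log t}{\sqrt{2}\,t}\,(1+o(1)).
\]
This $\log t$ exactly cancels the $\log t$ in the prefactor $\frac{t^{3/2}}{\log t\sqrt{2\pi(t-r)}}$, producing the constant $\frac{3}{2\sqrt{\pi}}$ and the weight $y'$ demanded by the claimed limit. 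The inequality $1-e^{-a}\leq a$ provides a $t$-uniform dominant of the form $C_0\,y'\,u(r,y'+\sqrt{2}r)e^{\sqrt{2}y'}$; its integrability is implicit in Proposition \ref{bramson_fundamental_interpolation} (which propagates the moment condition $\int_0^\infty y e^{\sqrt{2}y}u(0,y)\,dy<\infty$ from $r=0$ to arbitrary $r$, with help of the extra hypothesis $y_0<\infty$). Dominated convergence then yields the displayed identity.

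For the $r\to\infty$ limit, abbreviate the right-hand side as $L(r)$ and set $A_X(t)\defi e^{\sqrt{2}X}\frac{t^{3/2}}{\log t}u(t,X+\sqrt{2}t)$ for a fixed $X$. Multiplying \eqref{upper_lower_approx} by $e^{\sqrt{2}X}t^{3/2}/\log t$ and taking $t\to\infty$ via the first claim gives
\[
\gamma^{-1}(r)\,L(r)\;\leq\;\liminf_{t\to\infty}A_X(t)\;\leq\;\limsup_{t\to\infty}A_X(t)\;\leq\;\gamma(r)\,L(r)
\]
for all $r$ sufficiently large. Since $L(r)\in(0,\infty)$ for each such $r$ (positivity from $u(r,\cdot)>0$ on $\R$, itself a consequence of McKean's representation once $u(0,\cdot)\not\equiv 0$; finiteness from the dominating estimate above), the inner $\liminf$ and $\limsup$ are both finite and strictly positive. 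Combining the outer inequalities gives $\limsup_t A_X(t)\leq \gamma^2(r)\,\liminf_t A_X(t)$, and letting $r\to\infty$ with $\gamma(r)\downarrow 1$ forces $\limsup_t A_X(t)=\liminf_t A_X(t)$. Hence $A_X(t)$ converges to some $A_\infty\in(0,\infty)$, and the sandwich then forces $L(r)\to A_\infty$ as well.

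The main subtlety is that the squeeze must yield the convergence of $L(r)$ without a priori knowing that $A_X(t)$ converges: both convergences are established in one stroke, together with the fact that the common limit is independent of $X$.
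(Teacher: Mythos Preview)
Your approach is essentially the paper's: pointwise asymptotics of the integrand in \eqref{psi_no_m}, dominated convergence for the $t\to\infty$ limit, then a squeeze via \eqref{upper_lower_approx} to force convergence of both $A_X(t)$ and $L(r)$ simultaneously. The second half of your argument matches the paper's almost verbatim.

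There is, however, a genuine gap in the first half. You assert that the integrability of the dominant
\[
y'\,e^{\sqrt{2}y'}\,u(r,y'+\sqrt{2}r)
\]
over $[0,\infty)$ is ``implicit in Proposition \ref{bramson_fundamental_interpolation} (which propagates the moment condition \ldots\ from $r=0$ to arbitrary $r$)''. It is not. Proposition \ref{bramson_fundamental_interpolation} assumes the moment condition only at time $0$ and says nothing about its propagation; the sandwich \eqref{upper_lower_approx} is non-trivial for each fixed $t$ because the Gaussian factor $e^{-(y'-X)^2/2(t-r)}$ makes $\psi$ finite regardless, so you cannot read off the needed integrability from the proposition. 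The paper supplies this step explicitly, and it is exactly where the hypothesis $y_0<\infty$ is used: compare $u$ with the solution $u^{(2)}$ of the \emph{linearised} equation $\partial_t u^{(2)}=\tfrac12 u^{(2)}_{xx}+u^{(2)}$ via the maximum principle, then use the Feynman--Kac representation of $u^{(2)}$ (which is just a heat semigroup acting on $u(0,\cdot)$, supported in $(-\infty,y_0]$) to get
\[
u(r,y+\sqrt{2}r)\;\leq\; e^{-y^2/2r}\,e^{y_0 y/r}\,e^{-\sqrt{2}y},
\]
which is manifestly integrable against $y\,e^{\sqrt{2}y}\,dy$. Without this bound (or an equivalent one), your dominated convergence step is unjustified, and the finiteness of $L(r)$ for fixed $r$---which you then use in the squeeze---is also unproven.
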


\begin{proof}
The first claim is straightforward if we can take the limit $t\to\infty$ inside the integral in the definition of $\psi$. We need to justify this using dominated convergence. Since $e^{-x}\geq 1-x$ for $x>0$, the integrand in the definition of $\psi$ times $e^{x\sqrt{2}}\frac{t^{3/2}}{\log t}$
is smaller than 
\beq \label{to_bound_unif}
(cste) y' e^{y'\sqrt{2}}u(r,y'+\sqrt{2}r). 
\eeq
It remains to show that \eqref{to_bound_unif} is integrable in $y' \geq 0$. To see this, let 
$u^{(2)}(t,x)$ be the solution to $\partial_t  u^{(2)} = \frac{1}{2} u^{(2)}_{xx} - u^{(2)}$ [the {\it linearised} F-KPP-equation \eqref{kpp_one_minus}] with initial conditions $u^{(2)}(0,x) = u(0,x)$. By the maximum principle for nonlinear, parabolic pde's, see e.g. \cite[Corollary 1, p.29]{bramson_monograph}, 
\beq \label{dominated_one}
u(t,x)\leq u^{(2)}(t,x)
\eeq 
Moreover, by the Feynman-Kac representation and the definition of $y_0$, 
\beq \label{dominated_two}
u^{(2)}(t,x)\leq e^t \int_{-\infty}^{y^0} \frac{1}{\sqrt{2\pi t}}e^{-\frac{(y-x)^2}{2t}}dy,
\eeq
and for any $x>0$ we thus have the bound
\beq \label{dominated_three}
u^{(2)}(t,x) \leq e^t e^{-\frac{x^2}{2t}} e^{\frac{y_0 x}{t}}\ .
\eeq
Hence, 
\beq \label{dominated_four}
u(r,y+\sqrt{2} r)\leq e^{-\frac{y^2}{2r}}e^{\frac{y_0y}{2r}}e^{-y\sqrt{2}}\ .
\eeq
The upper bound is integrable over the desired measure since
\beq \label{dominated_five}
\int_0^\infty y e^{\sqrt{2} y} e^{-\frac{y^2}{2r}}e^{\frac{y_0y}{2r}}e^{-y\sqrt{2}} dy=\int_0^\infty y  e^{-\frac{y^2}{2r}} e^{\frac{y_0y}{2r}} dy <\infty \ .
\eeq
Therefore dominated convergence can be applied and the first part of the Proposition follows.

It remains to show that 
$$
\lim_{r\to\infty}\int_0^\infty ye^{y\sqrt{2}}u(r,y+\sqrt{2}r)dy \text{ exists and is finite.}
$$
Write $C(r)$ for the integral. By Proposition \ref{bramson_fundamental_interpolation},  for $r$ large enough,
$$
 \limsup_{t\to\infty }e^{x\sqrt{2}}\frac{t^{3/2}}{\log t} u(t,x+\sqrt{2t})\leq \gamma(r)\lim_{t\to\infty}e^{x\sqrt{2}}\frac{t^{3/2}}{\log t} \psi(r,t,x+\sqrt{2t})=C(r)\gamma(r)\ ,
$$
and
$$
\liminf_{t\to\infty }e^{x\sqrt{2}}\frac{t^{3/2}}{\log t} u(t,x+\sqrt{2t})\geq \gamma(r)^{-1}\lim_{t\to\infty}e^{x\sqrt{2}}\frac{t^{3/2}}{\log t} \psi(r,t,x+\sqrt{2t})=C(r)\gamma(r)^{-1}\ ,
$$
Therefore since $\gamma(r)\to 1$
$$
\limsup_{t\to\infty }e^{x\sqrt{2}}\frac{t^{3/2}}{\log t} u(t,x+\sqrt{2t})\leq \liminf_{r\to\infty}C(r)
$$
and
$$
\liminf_{t\to\infty }e^{x\sqrt{2}}\frac{t^{3/2}}{\log t} u(t,x+\sqrt{2t})\geq \limsup_{r\to\infty}C(r)\ .
$$
It follows that $\lim_{r\to\infty} C(r)=:C$ exists and so does $\lim_{t\to\infty }e^{x\sqrt{2}}\frac{t^{3/2}}{\log t} u(t,x+\sqrt{2t})$. Moreover $C>0$ otherwise 
\beq
\lim_{t\to\infty }e^{x\sqrt{2}}\frac{t^{3/2}}{\log t} u(t,x+\sqrt{2t})=0
\eeq
which is impossible since $$\lim_{t\to\infty }e^{x\sqrt{2}}\frac{t^{3/2}}{\log t} u(t,x+\sqrt{2t})\geq C(r)/\gamma(r)$$ for $r$ large enough but finite
($\gamma(r)$ and $C(r)$ are finite for $r$ finite).
Moreover $C<\infty$, otherwise $$\lim_{t\to\infty }e^{x\sqrt{2}}\frac{t^{3/2}}{\log t} u(t,x+\sqrt{2t})=\infty,$$ which is impossible
since $\lim_{t\to\infty }e^{x\sqrt{2}}\frac{t^{3/2}}{\log t} u(t,x+\sqrt{2t})\leq C(r)\gamma(r)$ for $r$ large enough, but finite.
\end{proof}

\subsection{Existence of a limiting process}

\begin{proof}[Proof of Proposition \ref{teor_existence}] 
It suffices to show that, for $\phi \in \mathcal C_c(\R)$ positive, the Laplace transform $\Psi_t(\phi)$, defined in \eqref{eqn: laplace}, of the extremal process of branching Brownian motion 
converges. 

Remark first that this limit cannot be $0$, since in the case of BBM it can be checked \cite{abk} that
\[
\lim_{N \to \infty} \lim_{t \to \infty} \PP\left[{\mathcal E}_t(B) > N \right] = 0, \; \text{for any bounded measurable set}\; B\subset \R,
\]
hence the limiting point process must be locally finite. 

For convenience, we define $\max \mathcal{E}_t \defi \max_{k\leq n(t)} x_k(t)-m(t)$.
By Theorem \ref{bramson_fundamental_convergence} applied to the function 
\[
u(t, \delta+m(t)) = \E\left[\prod_{k=1}^{n(t)}\1_{\left\{x_k(t) -m(t)\leq \delta \right\}}\right]=\PP[\max \mathcal{E}_t\leq\delta]
\]
it holds that 
\beq
\label{eqn: exist cutoff}
\lim_{\delta\to\infty}\lim_{t\to\infty} 1-u(t, \delta+m(t))=\lim_{\delta\to\infty} 1-\omega(\delta)=0\ .
\eeq

%for any $\vare >0$ there exists $t_0(\vare)$ such that for $t>t_0$, 
%\[
%\big| u(t,x+m(t)) -\omega(x)\big| \leq \vare/2.
%\]
%Since there exists $y = y(\vare)$ such that for $y>y(\vare)$, $1-\omega(y)> \vare/2$, one has that for $t>t_0$, 
%\beq \label{bounding_max}
%0 < \PP\left[\mathcal{E}_t > y  \right] < \vare. 
%\eeq
Now consider for $\delta>0$
\beq\label{split_laplace}\bea
\E\left[\exp\left( -\int \phi ~ d\mathcal{E}_t  \right)\right] & = 
\E\left[\exp\left( -\int \phi ~ d\mathcal{E}_t  \right) 
\1_{\{\max \mathcal{E}_t \leq \delta\}}\right] \\
& \qquad + \E\left[\exp\left( -\int \phi ~d\mathcal{E}_t  \right) \1_{\{\max \mathcal{E}_t > \delta\}}\right]
\eea \eeq
%By \eqref{bounding_max}, $t$ can be taken large enough so that the second term in \eqref{split_laplace} is smaller than $\vare$. 
Note that by \eqref{eqn: exist cutoff}, the second term on the r.h.s of \eqref{split_laplace} satisfies 
$$
\limsup_{\delta\to\infty}\limsup_{t\to\infty}\E\left[\exp\left( -\int \phi ~d\mathcal{E}_t  \right) \1_{\{\max \mathcal{E}_t > \delta\}}\right]
\leq \limsup_{\delta\to\infty}\limsup_{t\to\infty}\PP[\max \mathcal{E}_t>\delta]=0\ .
$$
It remains to address the first term on the r.h.s of \eqref{split_laplace}.
Write for convenience 
$$
\Psi_t^{\de}(\phi) \defi \E\left[\exp\left( -\int \phi ~ d\mathcal{E}_t  \right) 
\1_{\{\max \mathcal{E}_t \leq \delta\}}\right]\ .
$$
We claim that the limit
\beq
\lim_{\delta\to \infty} \lim_{t\to \infty} \Psi_t^{\de}(\phi) \defi \Psi(\phi)
\eeq 
exists, and is strictly smaller than one. To see this, set $$g_\de(x) \defi \ee^{-\phi(x)} \1_{\{x \leq \delta\}}\ ,$$ and 
\beq 
u_\de(t, x) \defi \E\left[ \prod_{k\leq n(t)} g_\de(-x+x_k(t))\right].
\eeq
By Lemma \ref{mckean_fundamental}, $u_\de$ is then solution to the F-KPP equation with $u_\de(0,x)= g_\de(-x)$. Moreover, $g_\de(-x)= 1$ for $x$ large enough in the positive, and $g_\de(-x)=0$ for $x$ large enough in the negative, so that conditions $(1)$ and $(2)$ of Theorem \ref{bramson_fundamental_convergence} are satisfied as well as \eqref{choice_m} on the form of $m(t)$. 
Note that this would not be the case without the presence of the cutoff. Therefore 
\beq 
u_\de(t, x+m(t)) = \E\left[\prod_{k=1}^{n(t)} g_\de(-x+x_k(t)-m(t))\right]
\eeq
converges as $t\to\infty$ uniformly in $x$ by Theorem \ref{bramson_fundamental_convergence}. But 
\beq \bea 
\Psi_t^\de(\phi) & = \E\left[\exp\left( -\int \phi ~ d{\mathcal E}_t  \right)\1_{\{\max \mathcal{E}_t \leq \delta\}}\right] \\
& = \E\left[ \prod_{k\leq n(t)} \exp\Big( - \phi\big(x_k(t)-m(t)\big)  \Big)\1_{\{x_k(t)-m(t) \leq \delta\}}\right] \\
& = \E\left[ \prod_{k\leq n(t)} g_\de(x_k(t)-m(t)) \right] = u_\de(t, 0+m(t)),
\eea\eeq
and therefore the limit $\lim_{t\to \infty} \Psi_t^\de(\phi) \defi \Psi^\de(\phi)$ exists. But the function $\de \mapsto \Psi^\de(\phi)$ is increasing 
and smaller than one, by construction. Therefore, $\lim_{\de\to \infty} \Psi^\de(\phi) = \Psi(\phi)$ exists. Moreover, nonnegativity of $\phi$ 
implies that $\Psi_t^\de(\phi) \leq \PP[\max \mathcal{E}_t\leq\delta]$: taking the limit $t\to \infty$ first and $\delta \to \infty$ next thus shows that $\Psi(\phi) < 1$, and concludes the proof.
\end{proof}

\subsection{The process of cluster-extrema} 
Proposition \ref{cr_our_setting} can be used to obtain an elementary proof 
of the convergence of the process of the cluster-extrema towards the PPP$\left( C Z \sqrt{2}\ee^{-\sqrt{2}x} \dd x \right)$. 
We start by proving two lemmas that will be of use later on.
\begin{lem}
Let $u(t,x)$ be a solution to F-KPP with initial condition $u(0,x)$ satisfying the assumption of Proposition \ref{cr_our_setting}. 
Let
$$
C\defi \lim_{r\to\infty}\sqrt{\frac{2}{\pi}}\int_0^\infty y e^{y\sqrt{2}} u(t,y+\sqrt{2}r) \dd y\ ,
$$
then for any $x\in\R$:
\beq 
\label{eqn: shift const}
\lim_{r\to\infty}\sqrt{\frac{2}{\pi}}\int_0^\infty y e^{y\sqrt{2}}u(t,x+y+\sqrt{2}t)=C e^{-\sqrt{2}x}\ .
\eeq
\end{lem}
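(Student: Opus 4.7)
The plan is to reduce the shifted integral to the one defining $C$ via the interpolation estimate of Proposition \ref{bramson_fundamental_interpolation}. Fix $x \in \R$ and an auxiliary scale $s$, and observe that for $r$ large enough (so that $8s - \frac{3}{2\sqrt{2}}\log r \leq x$), the condition $X = x+y \geq 8s - \frac{3}{2\sqrt{2}}\log r$ of Proposition \ref{bramson_fundamental_interpolation} is automatically satisfied for every $y \geq 0$. Consequently
\[
\gamma(s)^{-1}\psi(s,r,x+y+\sqrt{2}r) \;\leq\; u(r,x+y+\sqrt{2}r) \;\leq\; \gamma(s)\,\psi(s,r,x+y+\sqrt{2}r), \quad y\geq 0.
\]
Writing $I(r,x)$ for the integral whose limit is sought, and $J_s(r,x) \defi \sqrt{\tfrac{2}{\pi}}\int_0^\infty y e^{\sqrt{2}y}\psi(s,r,x+y+\sqrt{2}r)\,dy$, it therefore suffices to prove $\lim_{r\to\infty} J_s(r,x) = e^{-\sqrt{2}x}C_s$ for constants $C_s \to C$ as $s\to\infty$, and then to pass to the limit $s\to\infty$ using $\gamma(s)\downarrow 1$.

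Substituting the explicit formula \eqref{psi_no_m} into $J_s(r,x)$ and exchanging the order of integration by Fubini, one is reduced to evaluating the inner integral
\[
\mathcal I_r(y') \defi \int_0^\infty y\, e^{-(y'-x-y)^2/(2(r-s))}\bigl(1-e^{-2y'(x+y+\frac{3}{2\sqrt{2}}\log r)/(r-s)}\bigr)\,dy,
\]
whose natural scale is $y \sim \sqrt{r-s}$. After substituting $\eta = y/\sqrt{r-s}$ and using $1 - e^{-a} \sim a$ as $a\downarrow 0$ for the exponential bracket together with $\int_0^\infty \eta^2 e^{-\eta^2/2}\,d\eta = \sqrt{\pi/2}$, one finds $\mathcal I_r(y') \sim 2y'\sqrt{(r-s)\pi/2}$ as $r\to\infty$. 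The prefactor $(2\pi(r-s))^{-1/2}$ built into $\psi$ absorbs this exactly, yielding
\[
\lim_{r\to\infty} J_s(r,x) \;=\; e^{-\sqrt{2}x}\, C_s, \qquad C_s \defi \sqrt{\frac{2}{\pi}}\int_0^\infty y' e^{\sqrt{2}y'} u(s,y'+\sqrt{2}s)\,dy',
\]
and $C_s \to C$ as $s\to\infty$ by the very definition of $C$. Sandwiching $I(r,x)$ between $\gamma(s)^{\pm 1}J_s(r,x)$ and letting $s\to\infty$ then completes the proof.

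The main technical obstacle is justifying the passage to the limit inside both integrals. Using the elementary bound $1-e^{-a} \leq a$, the integrand in $\mathcal I_r(y')$ (after the substitution $\eta = y/\sqrt{r-s}$) is dominated uniformly in $r$ by a quantity whose $\eta$-integral is bounded by a constant times $y'\sqrt{r-s}$; once the factor $(2\pi(r-s))^{-1/2}$ is absorbed, one is left with the dominating function $y' e^{\sqrt{2}y'} u(s,y'+\sqrt{2}s)$ for the outer $y'$-integral, which is integrable by the hypothesis of Proposition \ref{cr_our_setting}. This licenses dominated convergence. The subdominant logarithmic term $\frac{3}{2\sqrt{2}}\log r$ in the exponential bracket is handled in the same way, its contribution vanishing in the limit since it is divided by $r-s$.
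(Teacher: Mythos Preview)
Your argument is correct and reaches the same conclusion, but it takes a different route from the paper's proof. The paper's proof is a two-line application of Proposition~\ref{cr_our_setting}: that proposition establishes (in its proof) that $\lim_{t\to\infty} e^{\sqrt{2}x}\tfrac{t^{3/2}}{\log t}\,u(t,x+\sqrt{2}t)$ exists and equals the same constant for every fixed $x$, so one simply writes
\[
\lim_{r\to\infty}\int_0^\infty y e^{\sqrt{2}y}u(r,x+y+\sqrt{2}r)\,dy
= e^{-\sqrt{2}x}\cdot\frac{2\sqrt{\pi}}{3}\lim_{t\to\infty} e^{\sqrt{2}x}\frac{t^{3/2}}{\log t}\,u(t,x+\sqrt{2}t),
\]
invokes Proposition~\ref{cr_our_setting} once at shift $x$ and once at shift $0$, and reads off the factor $e^{-\sqrt{2}x}$. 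Your approach instead bypasses Proposition~\ref{cr_our_setting} entirely and works directly with the $\psi$-sandwich from Proposition~\ref{bramson_fundamental_interpolation}, computing the limiting double integral by hand. This is essentially a reproof of (a shifted version of) Proposition~\ref{cr_our_setting}; it is longer but more self-contained, and makes the mechanism for the appearance of $e^{-\sqrt{2}x}$ (the cancellation $e^{\sqrt{2}y}\cdot e^{-\sqrt{2}(x+y)} = e^{-\sqrt{2}x}$ inside $\psi$) fully explicit.

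One small imprecision worth flagging: your claimed uniform bound $\mathcal I_r(y')/\sqrt{2\pi(r-s)} \leq (\text{const})\, y'$ does not quite hold uniformly in $y'$, because the Gaussian in $\mathcal I_r(y')$ is centered near $y'-x$, so for $y'$ large relative to $\sqrt{r-s}$ the bound picks up extra powers of $y'$. This is harmless here, since the compact-support hypothesis of Proposition~\ref{cr_our_setting} yields the Gaussian decay $u(s,y'+\sqrt{2}s) \leq e^{-(y')^2/(2s)+O(y')}$ (this is exactly the content of \eqref{dominated_one}--\eqref{dominated_five}), against which any polynomial in $y'$ is integrable; so dominated convergence still applies with a slightly larger dominating function than the one you name.
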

\begin{proof}
By Proposition \ref{cr_our_setting}
\begin{equation}
\label{eqn: shift}
\begin{aligned}
\lim_{r\to\infty}\int_0^\infty y e^{y\sqrt{2}}u(t,x+y+\sqrt{2}t)&=
 e^{-x\sqrt{2}}\frac{2\sqrt{\pi}}{3}\lim_{t\to\infty}e^{x\sqrt{2}}\frac{t^{3/2}}{\log t}u(t,x+\sqrt{2}t)\\
&= e^{-x\sqrt{2}}\lim_{r\to\infty}\int_0^\infty y e^{y\sqrt{2}} u(t,y+\sqrt{2}r)\\
\end{aligned}
\end{equation}
\end{proof}
A straightforward consequence of the lemma, taking $u(0,x)=\1_{\{x>a\}}$, $a\in\R$, is the following vague convergence of the maximum when integrated over the
appropriate density.
\begin{lem}
\label{lem: max convergence}
For any continuous function $h:\R\to\R$ that is bounded at $+\infty$ and vanishes at $-\infty$, 
$$
\bea
&\lim_{t\to\infty} \int_{-\infty}^0\left\{\int_\R h(x) ~ \PP\Big(\max_{i}x_i(t)-\sqrt{2}t+y \in dx\Big)\right\} \sqrt{\frac{2}{\pi}}(-y e^{-\sqrt{2} y})dy \\
&\hspace{10cm}=\int_\R h(a)  \sqrt{2}  C e^{-\sqrt{2}a}da\ .
\eea
$$
\end{lem}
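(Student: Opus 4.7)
The plan is to verify the identity first for the special class of half-line indicators $h = \1_{(a, \infty)}$, for which the inner probability relates directly to the tail of $\max_i x_i(t)$, and then to extend to all admissible $h$ by uniform approximation together with a tail control on the underlying measures
\[
\nu_t(dx) \defi \int_{-\infty}^0 \PP\bigl(\max_i x_i(t) - \sqrt{2}t + y \in dx\bigr)\, \sqrt{\tfrac{2}{\pi}}(-y e^{-\sqrt{2}y})\, dy.
\]

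For $h = \1_{(a, \infty)}$ the substitution $z = -y$ rewrites the inner-then-outer integral as
\[
\sqrt{\tfrac{2}{\pi}} \int_0^\infty z\, e^{\sqrt{2}z}\, \PP\bigl(\max_i x_i(t) > \sqrt{2}t + a + z\bigr)\, dz.
\]
Setting $u(t, x) \defi \PP(\max_i x_i(t) > x)$, Lemma \ref{mckean_fundamental} shows that $u$ solves the F-KPP equation in the form \eqref{kpp_one_minus} with initial datum $u(0, \cdot) = \1_{(-\infty, 0)}$. All the hypotheses of the preceding lemma are then satisfied: $y_0 = 0 < \infty$, the integrability condition is trivial, and Bramson's form of $m(t)$ applies. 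Applying \eqref{eqn: shift const} with shift $x \mapsto a$ shows that the displayed integral converges to $Ce^{-\sqrt{2}a}$, which matches $\int_a^\infty \sqrt{2}Ce^{-\sqrt{2}x}dx$, the right-hand side of the lemma for this choice of $h$. By linearity the identity extends to every finite $\R$-linear combination of half-line indicators.

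For general continuous $h$ that is bounded at $+\infty$ and vanishes at $-\infty$, I would approximate $h$ uniformly on $[-R, +\infty)$ by step functions of the above type, using that the continuous extension of $h$ to $[-\infty, +\infty]$ is uniformly continuous. Step 1 with $a = -R$ gives $\nu_t([-R, +\infty)) \to Ce^{\sqrt{2}R}$, so the restriction of $\nu_t$ to $[-R, +\infty)$ has uniformly bounded total mass in $t$, which lets the uniform approximation pass through to the integrals. The main obstacle is then the tail $(-\infty, -R]$: one has to show that $\int_{-\infty}^{-R} h(x)\, \nu_t(dx) \to 0$ uniformly in $t$ as $R \to \infty$. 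The density $-y e^{-\sqrt{2}y}$ grows rapidly as $y \to -\infty$, but this is compensated by the decay of $\PP(\max_i x_i(t) > \sqrt{2}t + a + z)$ in the large-$z$ regime, quantified by the Chauvin-Rouault asymptotics of Proposition \ref{cr_our_setting}. Combined with the decay of $h$ at $-\infty$ this yields a uniform tail bound, and closes the proof.
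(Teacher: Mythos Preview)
Your reduction to half-line indicators $h=\1_{(a,\infty)}$ and the appeal to \eqref{eqn: shift const} is exactly the paper's argument: its entire proof is the remark ``take $u(0,x)=\1_{\{x>a\}}$'', which is your Step~1.

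Your extension to general $h$, however, has a real gap. The measures $\nu_t$ have \emph{infinite} total mass: after the substitution $z=-y$ one is integrating the law of $\max_i x_i(t)-\sqrt{2}t$ shifted by $-z$ against $\sqrt{2/\pi}\,z\,e^{\sqrt{2}z}\,dz$ on $(0,\infty)$, and for $z$ large the shifted maximum lies in $(-\infty,-R]$ with probability close to $1$, so $\nu_t((-\infty,-R])=+\infty$ for every $R$ and every $t$. Hence no decay of $h$ at $-\infty$ short of vanishing on a half-line can make $\int_{-\infty}^{-R} h\,d\nu_t$ small. Your appeal to the Chauvin--Rouault asymptotics is misplaced: Proposition~\ref{cr_our_setting} controls the \emph{right} tail $\PP(\max>\sqrt{2}t+a+z)$, whereas the left tail of $\nu_t$ comes from the region where $\max_i x_i(t)-\sqrt{2}t-z$ is very negative, i.e.\ from the bulk of the law of the maximum, not its large-deviation tail. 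Note too that the right-hand side $\int_{\R} h(a)\,e^{-\sqrt{2}a}\,da$ is itself divergent unless $h$ decays at least exponentially at $-\infty$, so the lemma as stated cannot hold for all $h$ that merely vanish at $-\infty$.

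This is ultimately a defect in the lemma's stated generality rather than in your core approach. In the paper's applications (Proposition~\ref{poissonianity_cluster_e} and equation~\eqref{eqn: max}), $h$ is either $1-e^{-\phi}$ with $\phi\in\mathcal{C}_c(\R)$ or a half-line indicator, hence supported in some $[a_0,\infty)$; for such $h$ the tail issue is vacuous and your Steps~1--2, together with the resulting vague convergence against $\mathcal{C}_c$ functions, already suffice.
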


\begin{proof}[Proof of Proposition \ref{poissonianity_cluster_e}]
Consider
$$
E \exp -\sum_i \phi(\eta_i+M^i(t)-\sqrt{2}t)
$$
where $\eta=(\eta_i)$ is Poisson with density $\sqrt{\frac{2}{\pi}}(-xe^{-\sqrt{2}x})dx$ on $(-\infty,0)$ and
$M^i(t)\defi\max_k x_k^{(i)}(t)$.
We show that
\begin{equation}
\label{eqn: to show}
\lim_{t\to\infty}E \exp -\sum_i \phi(\eta_i+M^i(t)-\sqrt{2}t)=\exp -C\int_\R (1-e^{-\phi(x)})e^{-\sqrt{2}x}dx\ .
\end{equation}
Since the underlying process is Poisson and the $M^i$'s are iid,
$$
E \exp- \sum_i \phi(\eta_i+M^i(t)-\sqrt{2}t)=\exp -\int_{-\infty}^0\left(1-E \left[e^{-\phi(x+M(t)-\sqrt{2t}}\right]\right)\sqrt{\frac{2}{\pi}}(-xe^{-\sqrt{2}x})dx\ .
$$
The result then follows from Lemma \ref{lem: max convergence} after taking the limit.
\end{proof}

\subsection{The Laplace functional and the F-KPP equation}

\begin {proof}[Proof of Proposition \ref{eqn: laplace rep}]
The proof of the proposition will be broken into proving two lemmas.

In the first lemma 
we establish an integral representation for the 
Laplace functionals of the extremal process of BBM which are truncated by a 
certain cutoff; in the second lemma we show that the results continues to 
 holds when
 the cutoff is lifted. Throughout this section, $\phi:\R\to [0,\infty)$ is a 
non-negative continuous function with compact support. 

\begin{lem}
\label{lem: rep cutoff} 
%Let $\widehat{\mathcal E}_t^x \defi \sum_{k\leq n(t)} \de_{-x+x_k(t)}$ be the random point measure of a BBM of length $t$ starting at $-x$ at time $0$, and
Consider
$$
u_\delta(t,x)\defi1-\E \left[\exp\left(- \sum_k \phi(-x+x_k(t)) \right) 
1_{\left\{\max_{k\leq n(t)} -x+ x_k(t) \leq \delta\right\}} \right].
$$
Then $u_\delta(t,x)$ is the solution of the F-KPP equation \eqref{kpp_one_minus} with initial condition
$u_\delta(0,x)=1-e^{-\phi(-x)}1(-x\leq \delta)$. Moreover,
\beq \label{eqn: convergence l}
\lim_{t\to\infty} u_\delta(t,x+m(t))= 1-\E\left[ \exp -C(\phi,\delta)Ze^{-\sqrt{2}x}\right]\ ,
\eeq
where
\begin{equation}
\label{eqn: kappa rep 1}
C(\phi,\delta)= \lim_{t\to\infty} \sqrt{\frac{2}{\pi}}\int_0^{\infty} u_\delta(t,y+\sqrt{2}t)ye^{\sqrt{2}y}dy\ .
\end{equation}

\end{lem}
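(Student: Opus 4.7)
The plan has three stages. First, I would cast the quantity as a solution to an F-KPP equation via McKean's observation: setting $f_\delta(y) := e^{-\phi(y)}\mathbf{1}_{y\leq\delta}$, the integrand inside the expectation factorizes as $\prod_k f_\delta(-x + x_k(t))$, so Lemma \ref{mckean_fundamental} (together with spatial reflection symmetry of \eqref{kpp_equation}) shows that $1 - u_\delta(t,\cdot)$ solves \eqref{kpp_equation} with initial datum $f_\delta(-x)$; equivalently, $u_\delta$ solves \eqref{kpp_one_minus} with initial datum $u_\delta(0,x) = 1 - e^{-\phi(-x)}\mathbf{1}_{-x\leq\delta}$.

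Second, I would invoke Theorem \ref{bramson_fundamental_convergence} to obtain uniform convergence to a traveling wave. The initial datum $f_\delta(-x)$ of $1 - u_\delta$ equals $1$ for $x$ sufficiently large positive (since then $-x < \delta$ and $\phi(-x) = 0$ by compact support of $\phi$) and vanishes for $x$ sufficiently large negative. Hence Condition (1) of Bramson's theorem is trivially satisfied (the integrand in the $\limsup$ vanishes for large $t$), Condition (2) holds on the left tail, and the compactness of the nontrivial part of the initial datum yields the sharper decay required for the centering \eqref{choice_m}. Bramson's theorem then gives
$$
(1 - u_\delta)(t, x + m(t)) \to \omega_{\delta,\phi}(x), \quad \text{uniformly in } x,
$$
for some $\omega_{\delta,\phi}$ solving the wave ODE \eqref{wave_pde}.

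Third, and this is the main obstacle, I would identify $\omega_{\delta,\phi}$ as $\E[\exp(-C(\phi,\delta)\, Z\, e^{-\sqrt{2}x})]$. Since non-trivial solutions of \eqref{wave_pde} are unique up to translation, $\omega_{\delta,\phi}(x) = \omega(x - a)$ for some $a = a(\phi,\delta)\in\R$; combined with the Lalley--Sellke representation \eqref{gumbel_like} this already gives the Gumbel-mixture form with constant $K := C e^{\sqrt{2}a}$. To match $K$ with the integral $C(\phi,\delta)$ I would reconcile two asymptotic regimes. On one hand, \eqref{to_the_right} applied to the shifted wave yields $1 - \omega_{\delta,\phi}(x) \sim K \cdot x e^{-\sqrt{2}x}$ as $x\to\infty$. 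On the other, Proposition \ref{cr_our_setting}, whose hypotheses are easily verified for $u_\delta$ (the initial datum has compactly supported positive part and the required exponential-moment bound follows immediately), supplies
$$
u_\delta(t, \tilde x + \sqrt{2}t) \sim \frac{3\,C(\phi,\delta)}{2\sqrt{2}} \cdot \frac{\log t}{t^{3/2}} \cdot e^{-\sqrt{2}\tilde x}, \quad t\to\infty,
$$
for fixed $\tilde x$. Substituting $\tilde x = x + \frac{3}{2\sqrt{2}}\log t$ so that $\tilde x + \sqrt{2}t = x + m(t)$, inserting this into the Stage 2 convergence, and expanding $1 - \omega(x + \frac{3}{2\sqrt{2}}\log t - a) \sim K \cdot \frac{3}{2\sqrt{2}}\log t \cdot e^{-\sqrt{2}x} t^{-3/2}$ via \eqref{to_the_right}, a straightforward matching of coefficients forces $K = C(\phi,\delta)$. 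The delicacy lies entirely in this reconciliation through the logarithmic correction $\frac{3}{2\sqrt{2}}\log t$ separating the Bramson and Chauvin--Rouault regimes; Stages 1 and 2 are mechanical.
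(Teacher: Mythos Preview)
Stages 1 and 2 are fine and coincide with the paper's argument. The gap is in Stage 3.

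Your matching argument tries to equate two asymptotics for $u_\delta(t, x+\sqrt{2}t)$: one from Proposition \ref{cr_our_setting} (fixed $x$, $t\to\infty$), and one obtained by writing $x+\sqrt{2}t = \tilde x + m(t)$ with $\tilde x = x + \tfrac{3}{2\sqrt{2}}\log t$ (your stated identity ``$\tilde x+\sqrt{2}t=x+m(t)$'' is a slip; the correct relation is $\tilde x+m(t)=x+\sqrt{2}t$) and invoking Stage 2 at $\tilde x$. But Bramson's theorem only gives the \emph{additive} uniform statement
\[
\big|\,u_\delta(t,\tilde x+m(t)) - \big(1-\omega_{\delta,\phi}(\tilde x)\big)\,\big| \to 0 \quad \text{uniformly in }\tilde x,
\]
and along $\tilde x = x + \tfrac{3}{2\sqrt{2}}\log t$ the term $1-\omega_{\delta,\phi}(\tilde x)$ itself decays like $\tfrac{\log t}{t^{3/2}}$. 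The uniform $o(1)$ error therefore swamps the signal, and you cannot conclude $u_\delta(t,x+\sqrt{2}t) \sim K\cdot\tfrac{3}{2\sqrt{2}}\,\tfrac{\log t}{t^{3/2}}\,e^{-\sqrt{2}x}$ from Stage 2. The coefficient matching $K=C(\phi,\delta)$ is thus unproved; what is missing is exactly a \emph{multiplicative} comparison at the vanishing scale.

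The paper does not use Proposition \ref{cr_our_setting} as a black box here. It goes back to the approximation $\psi(r,t,\cdot)$ of Proposition \ref{bramson_fundamental_interpolation} and evaluates it at the argument $x+m(t)$ (rather than $x+\sqrt{2}t$); for fixed $x$ this yields a \emph{non-vanishing} limit $\lim_{t\to\infty}\psi(r,t,x+m(t)) = x\,e^{-\sqrt{2}x}\,C(r)$ with $C(r)=\sqrt{2/\pi}\int_0^\infty u_\delta(r,y+\sqrt{2}r)\,y\,e^{\sqrt{2}y}\,dy$. The multiplicative sandwich $\gamma(r)^{-1}\psi \le u_\delta \le \gamma(r)\psi$ then gives, after $t\to\infty$,
\[
\gamma(r)^{-1}C(r)\ \le\ \frac{1-\omega_{\delta,\phi}(x)}{x\,e^{-\sqrt{2}x}}\ \le\ \gamma(r)\,C(r)\qquad (x>8r),
\]
and sending $r\to\infty$ along $x=9r$, using \eqref{to_the_right} on the middle term, forces $K=C(\phi,\delta)$. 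The point is that the needed ratio control is supplied by the $\psi$-sandwich of Proposition \ref{bramson_fundamental_interpolation}, not by the uniform convergence of Stage 2.
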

\begin{proof}
The first part of the Lemma is proved in the proof of Theorem \ref{teor_existence}, whereas \eqref{eqn: convergence l} follows from Theorem \ref{bramson_fundamental_convergence} and the representation \eqref{gumbel_like}. It remains to prove \eqref{eqn: kappa rep 1}. The proof is a refinement of Proposition \ref{cr_our_setting} that recovers the asymptotics \eqref{to_the_right}.

For $u_\de$ as above, let $\psi(r,t,x)$ be its approximation as in Proposition \ref{bramson_fundamental_interpolation} and choose $x, r$ so that $x\geq m(t)+8r$. 
By Proposition \ref{bramson_fundamental_interpolation} we then have the bounds
\beq \label{bounds}
\frac{1}{\gamma(r)}\psi(r, t , x+m(t)) \leq u_\delta(t, x+m(t)) \leq  \gamma(r)\psi(r, t , x+m(t))
\eeq
where
$$
\psi(r,t,x+m(t))=\frac{t^{3/2}e^{-\sqrt{2} x}}{\sqrt{2\pi(t-r)}}\int_0^\infty u_\delta(r,y'+\sqrt{2}r)e^{y'\sqrt{2}}e^{-\frac{(y'-x+\frac{3}{2\sqrt{2}\log t})^2}{2(t-r)}}(1-e^{-2\frac{y'x}{t-r}})dy' \ .
$$ 
Using dominated convergence as in Proposition \ref{cr_our_setting}, one ets
$$ 
\lim_{t\to\infty}\psi(r,t,x+m(t))=\frac{2xe^{-\sqrt{2} x}}{\sqrt{2\pi}}\int_0^\infty u_\delta(r,y'+\sqrt{2}r)y'e^{y'\sqrt{2}}dy'.
$$
Putting this back in \eqref{bounds},
\beq \label{bounds_three}
\frac{1}{\gamma(r)}C(r) \leq \lim_{t\to\infty}\frac{u_\delta(t, x+m(t))}{xe^{-\sqrt{2}x}}\leq \gamma(r)C(r),
\eeq
for
$
C(r) \defi \sqrt{\frac{2}{\pi}}\int_0^\infty u_\delta(r,y'+\sqrt{2}r)y'e^{y'\sqrt{2}}dy,
$ 
and $x> 8r$. We know that $\lim_{r\to \infty} C(r) \defi C>0$ exists by Proposition \ref{cr_our_setting}.
Thus taking $x=9r$, letting $r\to \infty$ in \eqref{bounds_three}, and using that 
$\gamma(r)\downarrow 1$, one has
$$
\lim_{x\to\infty}\lim_{t\to\infty}\frac{u_\delta(t, x+m(t))}{xe^{-\sqrt{2}x}}=
\lim_{r\to\infty}\sqrt{\frac{2}{\pi}}\int_0^\infty u_\delta(r,y'+\sqrt{2}r)y'e^{y'\sqrt{2}}dy\ .
$$
On the other hand, the representation \eqref{gumbel_like} and the asymptotics \eqref{to_the_right} yield
$$
\lim_{x\to \infty}\lim_{t\to\infty}\frac{u_\delta(t, x+m(t))}{xe^{-\sqrt{2}x}}= \frac{1-\E\left[\exp\left(- C(\phi,\delta) Z \ee^{-\sqrt{2}x} \right)\right]}{xe^{-\sqrt{2}x}} = C(\phi,\delta).
$$
The claim follows from the last two equations.
\end{proof}

The results of Lemma \ref{lem: rep cutoff} also hold when the cutoff is 
removed.
The proof shows (non-uniform) convergence of the solution of the F-KPP equation 
when one condition of Theorem \ref{bramson_fundamental_convergence} is not fulfilled.
With an appropriate continuity argument, a Lalley-Sellke type representation is recovered.
\begin{lem} \label{lifted}
Let $u(t,x), u_\de(t, x)$ be solutions of the F-KPP equation \eqref{kpp_one_minus} with initial condition $u(0,x)=1-e^{-\phi(-x)}$ 
and $u_\de(t, x)= 1-e^{-\phi(-x)}\1_{\{-x\leq \de\}}$, respectively. Set 
$C(\de, \phi) \defi \lim_{t\to\infty} \sqrt{\frac{2}{\pi}}\int_0^{\infty} u_\de(t,y+\sqrt{2}t)ye^{\sqrt{2}y}dy$. 
Then
$$
\lim_{t\to\infty} u(t,x+m(t))= 1-\E\left[ \exp -C(\phi)Ze^{-\sqrt{2}x}\right]\ ,
$$
with
$$
C(\phi)\defi \lim_{t\to\infty} \sqrt{\frac{2}{\pi}}\int_0^{\infty} u(t,y+\sqrt{2}t)ye^{\sqrt{2}y}dy = \lim_{\delta\to\infty} C(\phi,\delta) \ .
$$
\end{lem}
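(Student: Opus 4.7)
The plan is to treat $u_\delta$ as a monotone approximation to $u$ and to justify the interchange of the limits $t\to\infty$ and $\delta\to\infty$. By the McKean representation (Lemma \ref{mckean_fundamental}), one has the pointwise identity
\[
u_\delta(t,x) - u(t,x) = \E\left[\prod_{k\leq n(t)} e^{-\phi(x_k(t)-x)}\,\1_{\{\max_{k}(x_k(t)-x) > \delta\}}\right] \geq 0,
\]
so $u_\delta \downarrow u$ pointwise as $\delta\uparrow\infty$; consequently $\delta \mapsto C(\phi,\delta)$ is monotone decreasing, and the limit $C(\phi) \defi \lim_{\delta\to\infty} C(\phi,\delta) \in [0,\infty]$ exists.

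The same identity furnishes the crucial bound
\[
0 \leq u_\delta(t,y+m(t)) - u(t,y+m(t)) \leq \PP\left[\max_{k\leq n(t)} x_k(t) - m(t) > \delta + y\right].
\]
Taking $t\to\infty$, Bramson's Theorem \ref{bramson_fundamental_convergence} turns the right-hand side into $1-\omega(\delta+y)$, which vanishes as $\delta\to\infty$ for each fixed $y$. This licenses the interchange of limits: combined with Lemma \ref{lem: rep cutoff},
\[
\lim_{t\to\infty} u(t,y+m(t)) = \lim_{\delta\to\infty}\lim_{t\to\infty} u_\delta(t,y+m(t)) = 1 - \E\left[\exp\left(-C(\phi)\,Z\, e^{-\sqrt{2}y}\right)\right],
\]
where the last equality uses bounded convergence together with the monotone convergence $C(\phi,\delta)\downarrow C(\phi)$. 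Finiteness of $C(\phi)$ is immediate from $C(\phi) \leq C(\phi,\delta) < \infty$ at any fixed $\delta$, so the limit is nondegenerate.

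It remains to identify $C(\phi)$ with the integral expression in the statement. Since $\phi$ has compact support, the initial datum $u(0,\cdot)$ does too, so $y_0 \defi \sup\{y : u(0,y) > 0\}$ is finite and Proposition \ref{cr_our_setting} applies directly to $u$. Repeating the tail asymptotic argument from the proof of Lemma \ref{lem: rep cutoff} with $u$ in place of $u_\delta$ yields
\[
\lim_{y\to\infty}\lim_{t\to\infty} \frac{u(t,y+m(t))}{y\, e^{-\sqrt{2}y}} = \lim_{t\to\infty} \sqrt{\frac{2}{\pi}}\int_0^\infty u(t,y+\sqrt{2}t)\, y\, e^{\sqrt{2}y}\,dy,
\]
while the Lalley--Sellke form just obtained, combined with the tail behavior \eqref{to_the_right}, forces this same limit to equal $C(\phi)$; the two expressions for $C(\phi)$ therefore coincide.

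The main obstacle is controlling the difference $u_\delta - u$ uniformly enough in the moving frame $y+m(t)$ to legitimize swapping the limits in $t$ and $\delta$; this rests entirely on Bramson's convergence of the unweighted maximum distribution to the traveling wave $\omega$. A secondary subtlety --- ruling out $C(\phi) \in \{0,\infty\}$ in order to preserve nondegeneracy of the limiting Laplace transform --- is handled by the monotone sandwich $0 < C(\phi) \leq C(\phi,\delta) < \infty$ at any finite $\delta$, together with the strict positivity supplied by Proposition \ref{cr_our_setting} applied to $u$ itself.
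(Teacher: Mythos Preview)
Your approach is correct and takes a genuinely different route from the paper's. The paper never controls $u_\delta-u$ pointwise in the frame $y+m(t)$; instead it works directly with the weighted integrals $U(t)=\int_0^\infty u(t,y+\sqrt{2}t)\,y\,e^{\sqrt{2}y}\,dy$, its analogue $F(t,\delta)$ for $u_\delta$, and the error term $M(t,\delta)=\int_0^\infty \PP[\max_k x_k(t)-\sqrt{2}t>\delta+y]\,y\,e^{\sqrt{2}y}\,dy$. Because of the weight $y\,e^{\sqrt{2}y}$, showing $\lim_\delta\lim_t M(t,\delta)=0$ requires a \emph{quantitative} Gaussian-type tail bound on the maximum (Lemma~\ref{tight_bounds}); the qualitative convergence $\PP[\max-m(t)>X]\to 1-\omega(X)$ would not suffice. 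By contrast, working at fixed $y$ in the $m(t)$ frame and passing to the limit first, you need only Bramson's convergence to the traveling wave, and Lemma~\ref{tight_bounds} never enters. For strict positivity of $C(\phi)$, the paper argues indirectly by contradiction through the cluster-extrema process $\Pi_t^{\text{ext}}$ and Proposition~\ref{poissonianity_cluster_e}, whereas you read it off directly from Proposition~\ref{cr_our_setting} applied to $u$ itself.

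One small caveat on that last step: Propositions~\ref{bramson_fundamental_interpolation} and~\ref{cr_our_setting} are stated under the hypothesis ``$u(t,\cdot+m(t))$ converges,'' which in context means the uniform convergence of Theorem~\ref{bramson_fundamental_convergence}. Your $u$ has compactly supported initial data, so condition~2 of that theorem fails, and you have only established pointwise convergence. This is harmless in practice --- the sandwich bounds in the proof of Proposition~\ref{bramson_fundamental_interpolation} come from Bramson's Proposition~8.3 and depend only on the integrability of the initial data, not on convergence --- but you are invoking those propositions slightly beyond their stated hypotheses, and it would be worth saying so explicitly.
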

\begin{proof}
It is straightforward to check that
\beq \label{difference}
0\leq u_\delta(t, x)-u(t,x)\leq \PP(\max x_k(t)> \delta +x)\ , 
\eeq
from which it follows that
\beq \bea \label{bounds_u_l}
& \int_0^\infty u_\delta(t, x+\sqrt{2}t) x e^{\sqrt{2}x} \dd x - \int_0^\infty \PP\left[ \max x_k(t) -\sqrt{2} t> \delta +x \right] x e^{\sqrt{2}x} \dd x\\ 
& \hspace{2cm} \leq \int_0^\infty u(t, x+\sqrt{2}t) x e^{\sqrt{2}x} \dd x \\
& \hspace{4cm} \leq \int_0^\infty u_\delta(t, x+\sqrt{2}t) x e^{\sqrt{2}x} \dd x.
\eea \eeq
Define
\[
F(t, \delta) \defi \int_0^\infty u_\delta(t, x+\sqrt{2}t) x e^{\sqrt{2}x} \dd x,
\]
\[
U(t) \defi \int_0^\infty u(t, x+\sqrt{2}t) x e^{\sqrt{2}x} \dd x,
\]
and 
\[
M(t, \delta) \defi \int_0^\infty \PP\left[ \max x_k(t) -\sqrt{2} t> \delta +x \right] x e^{\sqrt{2}x} \dd x.
\]
The inequalities in \eqref{bounds_u_l} then read  
\beq \label{bounds_short}
F(t,\delta) -M(t, \delta) \leq F(t) \leq F(t, \delta).
\eeq
We claim that 
\beq \label{claim_one}
\lim_{\delta \to \infty} \lim_{t\to \infty} M(t, \delta) = 0.
\eeq
We postpone the proof of this, and remark that Proposition \ref{cr_our_setting}  implies that to given 
$\de$, $\lim_{t \to \infty} F(t, \delta) \defi F(\delta)$ exists and is strictly positive. 
We thus deduce from \eqref{bounds_short} that
\beq \label{bounds_inf_sup}
\liminf_{\delta \to \infty} F(\de) \leq \liminf_{t\to \infty} U(t) \leq \limsup_{t\to \infty} U(t) \leq \limsup_{\de\to \infty} F(\de).  
\eeq
We claim that $\lim_{\de\to \infty} F(\de)$ exists, is strictly positive and finite. To see this, 
we first observe that the function $\delta \to F(\de)$ is by construction decreasing, and positive, therefore the limit $\lim_{\de\to \infty} F(\de)$ exists. 
Strict positivity is proved in a somewhat indirect fashion: we proceed by contradiction, and rely on the convergence of the process of cluster extrema. Assume that
\beq \label{contradiction}
\lim_{\de \to \infty} F(\de) = 0,
\eeq 
and thus that
\beq \label{real_contradiction}
\lim_{t\to \infty} U(t) = 0.
\eeq
Using the form of the Laplace functional of a Poisson process, we have that
\beq \bea 
& E\left[\exp\left( - \int \phi(x) \Pi_t(\dd x) \right)\right] = \\
& = E \exp\left[-Z \sqrt{\frac{2}{\pi}}\int_{-\infty}^0 \left(1-\E\left[\exp-\sum_{k\leq n(t)}\phi(x+x_k(t)-\sqrt{2}t)\right]\right)\left\{-x e^{-\sqrt{2x}}\right\}dx\right] \\
& \stackrel{x\mapsto -x}{=} E \exp\left[-Z \sqrt{\frac{2}{\pi}} \int_{0}^\infty \left(1-\E\left[\exp-\sum_{k\leq n(t)}\phi(-x+x_k(t)-\sqrt{2}t)\right]\right) x e^{\sqrt{2x}} dx\right] \\
& = E\left[ \exp\big(-Z \sqrt{\frac{2}{\pi}} U(t) \big) \right].
\eea \eeq
Therefore, \eqref{real_contradiction} would imply that 
\beq \bea \label{laplace_cluster}
& \lim_{t \to \infty} E\left[\exp\left( - \int \phi(x) \Pi_t(\dd x) \right)\right] = E\left[\exp\left(-Z \sqrt{\frac{2}{\pi}} \lim_{t \to \infty} U(t) \right) \right] = 1\ .
\eea \eeq
This cannot hold. In fact, for $\Pi_t^{\text{ext}}$ the process of the cluster-extrema defined earlier,
one has the obvious bound 
\beq \label{dominated_pp}
E\left[\exp\left( - \int \phi(x) \Pi_t(\dd x) \right)\right] \leq E\left[\exp\left( - \int \phi(x) \Pi^{\text{ext}}_t(\dd x) \right)\right] \ .
\eeq
Since the process of cluster-extrema converges, by Proposition \ref{poissonianity_cluster_e}, to a PPP$\left( C Z \ee^{-\sqrt{2}x} \dd x\right)$, 
\beq
\lim_{t\to \infty} E\left[\exp\left( - \int \phi(x) \Pi_t(\dd x) \right)\right] \leq E\left[\exp\left( - C Z \int \left\{1-\ee^{-\phi(x)}\right\}  \ee^{-\sqrt{2}x} \dd x\right)\right] < 1.
\eeq
This contradicts \eqref{laplace_cluster} and therefore also \eqref{real_contradiction}. 

It remains to prove \eqref{claim_one}. 
For this, we shall use an upper bound for the right tail of the law of the maximum of BBM established in \cite{abk_poissonian}. 
It is a consequence of tight bounds established by Bramson \cite[Prop. 8.2]{bramson_monograph}.

\begin{lem}\cite[Cor. 10]{abk_poissonian} \label{tight_bounds}
For $X>1$, and $t\geq t_o$ (for $t_o$ a numerical constant),
\beq \bea \label{up_tight_zero}
\PP\left[ \max_{k\leq n(t)} x_k(t) -m(t) \geq X \right] & \leq \rho \cdot X \cdot \exp\left(-\sqrt{2} X - \frac{X^2}{2t}+\frac{3 }{2\sqrt{2}}X \frac{\log t}{t}\right).
\eea \eeq
for some constant $\rho>0$.
\end{lem}
Now since $\sqrt{2}t = m(t) + \frac{3}{2\sqrt{2}}\log t$,
\beq \bea \label{max_nocontr}
& \int_0^\infty \PP\left[ \max x_k(t) -\sqrt{2} t> \delta +x \right] x e^{\sqrt{2}x} \dd x \\
& \qquad = \int_0^\infty \PP\left[ \max x_k(t) - m(t) > \delta +x +\frac{3}{2\sqrt{2}} \log t \right] x e^{\sqrt{2}x} \dd x \\
& \qquad = \frac{e^{-\sqrt{2} \delta}}{t^{3/2}} \int_{\delta +\frac{3}{2\sqrt{2}} \log t}^\infty \PP\left[ \max x_k(t) - m(t) > y \right] y e^{\sqrt{2}y} \dd y  \\
& \qquad \qquad - \left(\de +\frac{3}{2\sqrt{2}} \log t \right)\frac{e^{-\sqrt{2} \delta}}{t^{3/2}} \int_{\delta +\frac{3}{2\sqrt{2}} \log t}^\infty \PP\left[ \max x_k(t) - m(t) > y \right] e^{\sqrt{2}y} \dd y,
\eea \eeq
the last line by change of variable. We address the large time limit of the first term on the right-hand side above.
The second term is handled similarly.
 By Lemma \ref{tight_bounds}, the first term is bounded, up to constant, by 
\beq \bea
& \frac{e^{-\sqrt{2} \delta}}{t^{3/2}} \int_{\delta +\frac{3}{2\sqrt{2}} \log t}^\infty y^2 \exp\left(-\frac{y^2}{2t} +\frac{3}{2\sqrt{2}} y\frac{\log t}{t}\right) \dd y \\
& \qquad = \frac{e^{-\sqrt{2} \delta}}{t^{3/2}} \exp\left[{\frac{9}{16} \frac{(\log t)^2}{t}}\right] \int_{\delta +\frac{3}{2\sqrt{2}} \log t}^\infty y^2 e^{-\frac{(y-3\log t/2\sqrt{2} )^2}{2t}} \dd y \\
& \qquad \leq 2\cdot  \frac{e^{-\sqrt{2} \delta}}{t^{3/2}} \int_{\delta }^\infty \left(z+\frac{3}{2\sqrt{2}} \log t\right)^2 e^{-z^2/2t} \dd z,
\eea \eeq
where in the last line we used that $\exp\left[{\frac{9}{16} \frac{(\log t)^2}{t}}\right] = 1+ o(1) \leq 2,$ as $t\to\infty$. 
By developing the square $\left(z+\frac{3}{2\sqrt{2}} \log t\right)^2$ one easily sees that the only contribution which is not vanishing in the limit comes from the $z^2$-term, for which we have
\beq 
2\cdot  \frac{e^{-\sqrt{2} \delta}}{t^{3/2}} \cdot \int_{\delta }^\infty z^2 e^{-z^2/2t} \dd z \leq \rho \cdot  e^{-\sqrt{2} \delta} \to 0, 
\eeq
as $\delta \to \infty$. This implies \eqref{claim_one} and concludes the proof of Lemma \ref{lifted}.
\end{proof}

Combining the assertions of Lemma  \ref{lem: rep cutoff} 
and Lemma \ref{lifted} 
yields the assertion of Proposition  \ref{eqn: laplace rep}.
\end{proof}

Proposition \ref{lifted} yields a short proof of Theorem \ref{main_theorem}.
\begin{proof}[Proof of Theorem \ref{main_theorem}] 
The Laplace functional of $\Pi_t$  using the form of the Laplace functional of a Poisson process reads
\beq \bea \label{laplace_cluster_two}
& E\left[\exp - \int \phi(x) \Pi_t(\dd x)\right]  \\
& = E\left[\exp- \int_{-\infty}^0 \left(1-\E\left[\exp-\sum_{k\leq n(t)}\phi(x+x_k(t)-\sqrt{2}t)\right]\right)\sqrt{\frac{2}{\pi}}\left\{-x e^{-\sqrt{2x}}\right\}dx\right] \\
& = E \exp\left[- \sqrt{\frac{2}{\pi}}\int_{0}^\infty u(t, x+\sqrt{2}t +\frac{1}{\sqrt{2}}\log Z) x e^{\sqrt{2x}} dx\right],
\eea \eeq
with 
$$
u(t, x)=1-\E\left[\exp-\sum_{k=1}^{n(t)}\phi(-x+x_k(t))\right].
$$
By  \eqref{eqn: shift},
\[
\lim_{t\to \infty}\sqrt{\frac{2}{\pi}} \int_{0}^\infty u(t, x+\sqrt{2}t+\frac{1}{\sqrt{2}}\log Z) x e^{\sqrt{2x}} dx= Z \sqrt{\frac{2}{\pi}} \lim_{t\to \infty} \int_{0}^\infty u(t, x+\sqrt{2}t) x e^{\sqrt{2x}} dx\ ,
\]
and the limit exists and is strictly positive by Proposition \ref{lem: rep cutoff}. 
This implies that the Laplace functionals of $\lim_{t\to\infty}\Pi_t$ and of the extremal process of BBM are equal.
The proof of Theorem \ref{main_theorem} is concluded.  
\end{proof}

\subsection{Properties of the clusters}
In this section we prove Proposition \ref{squareroot_prop} and Proposition \ref{iid}.

\begin{proof}[Proof of Proposition \ref{squareroot_prop}]
Throughout the proof, the probabilities are considered conditional on $Z$.
We show that for $\vare>0$ there exists $C_1, C_2$ such that 
\beq \label{starting_square}
\sup_{t\geq t_0} P\left[\exists_{i,k}: \eta_i + \frac{1}{\sqrt{2}}\log Z + x_k^{(i)}(t)-\sqrt{2} t  \geq Y,\, \text{but}\: \eta_i \notin [-C_1 \sqrt{t}, -C_2 \sqrt{t}] \right] \leq \vare.
\eeq
The proof is split in two parts. We claim that, for $t$ large enough, there exists $C_1>0$ small enough such that 
\beq \label{square_one}
P\left[\exists_{i,k}: \eta_i + \frac{1}{\sqrt{2}}\log Z+ x_k^{(i)}(t)-\sqrt{2} t  \geq Y,\, \text{but}\: \eta_i \geq -C_1 \sqrt{t}  \right] \leq \vare/2, 
\eeq 
and $C_2>0$ large enough such that 
\beq \label{square_two}
P\left[\exists_{i,k}: \eta_i + \frac{1}{\sqrt{2}}\log Z+ x_k^{(i)}(t)-\sqrt{2} t  \geq Y,\, \text{but}\: \eta_i \leq -C_2 \sqrt{t}  \right] \leq \vare/2. 
\eeq 
By Markov inequality the left-hand side of \eqref{square_one} is less than
\beq \bea \label{square_three}
& \int_{-C_1 \sqrt{t}}^0 P\left[ \max_{k} x_k(t) \geq \sqrt{2} t+ Y -x-  \frac{1}{\sqrt{2}}\log Z\right] (-x \ee^ {-\sqrt{2}x}) \dd x \\
& \quad = \int_{0}^{C_1 \sqrt{t}} P\left[ \max_{k} x_k(t) \geq \sqrt{2} t+ Y +x- \frac{1}{\sqrt{2}}\log Z\right] x \ee^ {\sqrt{2}x} \dd x
\eea \eeq
This is the integral appearing in \eqref{max_nocontr}, truncated at $C_1 \sqrt{t}$, and with $\de$ replaced by $Y$. Hence, as $t\to \infty$,  
\beq \bea \label{square_four}
\eqref{square_three} & = \frac{e^{-\sqrt{2} Y}Z}{t^{3/2}} \int_{Y +\frac{3}{2\sqrt{2}} \log t}^{C_1 \sqrt{t}+\frac{3}{2\sqrt{2}}\log t} x^2 \exp\left(-\frac{x^2}{2t} +\frac{3}{2\sqrt{2}} x\frac{\log t}{t}\right) \dd x+ o(1).
\eea \eeq
By change of variable $x \to \frac{x}{\sqrt{t}}$ we see that, as $t\to \infty$ and up to irrelevant numerical constants, 
\beq \label{square_five}
\eqref{square_four} \leq Ze^{-\sqrt{2} Y} (1+o(1)) \int_0^{C_1} x^2  \ee^{-x^2/2} \dd x.
\eeq
But for smaller and smaller $C_1$ the integral is obviously vanishing: it thus suffices to choose $C_1$ small enough to have that \eqref{square_five} is less than $\vare/2$, settling \eqref{square_one}.

The proof of \eqref{square_two} is analogous, and we omit the details. The end result is that, for large enough $t$ and up to irrelevant \
numerical constant, 
\beq \label{square_final}
\eqref{square_two} \leq Ze^{-\sqrt{2} Y} (1+o(1)) \int_{C_2}^\infty x^2  \ee^{-x^2/2} \dd x.
\eeq
It suffices to choose $C_2$ large enough to obtain \eqref{square_two}.
\end{proof}
 
For the proof of Proposition \ref{iid}, the following Lemma is needed. 

\begin{lem} \label{not_a}
Let $u(t,x)$ be a solution to the F-KPP equation \eqref{kpp_one_minus} with initial data satisfying
$$
\int_0^\infty y \ee^{\sqrt{2}y}u(0,y)dy<\infty\ ,
$$
and such that $u(t,\cdot+m(t))$ converges. Let  $\psi$ be the associated 
approximation as in Proposition \ref{bramson_fundamental_interpolation}. Then, for  $x = a \sqrt{t}$ and $Y\in \R$,  
\beq \label{limit_k}
\lim_{t\to \infty} \frac{\ee^{\sqrt{2}x} t^{3/2}}{x} \psi(r, t, x+Y +\sqrt{2}t) = K \ee^{-\sqrt{2} Y} \ee^{-a^2/2}
\eeq
where
\[
K = \sqrt{\frac{2}{\pi}}\int_0^\infty u(r,y'+\sqrt{2}r) y' e^{y'\sqrt{2}} \dd y' \ .
\]
Moreover, the convergence is uniform for $a$ in a compact set.
\end{lem}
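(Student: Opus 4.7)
The plan is to substitute the explicit formula for $\psi$ from Proposition \ref{bramson_fundamental_interpolation} and then carry out a dominated-convergence argument on the resulting integral representation. Setting $X \defi x+Y = a\sqrt{t}+Y$, we have
\[
\frac{e^{\sqrt{2}x}t^{3/2}}{x}\,\psi(r,t,X+\sqrt{2}t)
=\frac{e^{-\sqrt{2}Y}\,t^{3/2}}{x\sqrt{2\pi(t-r)}}\int_0^\infty u(r,y'+\sqrt{2}r)\,e^{y'\sqrt{2}}\,e^{-\frac{(y'-X)^2}{2(t-r)}}\Big(1-e^{-2y'\frac{X+\frac{3}{2\sqrt{2}}\log t}{t-r}}\Big)dy'.
\]
The strategy is to identify the pointwise limit of this integrand in $y'$, verify an integrable dominating function, and check that all estimates are uniform for $a$ in a compact set $[a_1,a_2]\subset(0,\infty)$.

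For the pointwise limit, I would expand the Gaussian factor as $e^{-X^2/2(t-r)}\cdot e^{y'X/(t-r)}\cdot e^{-y'^2/2(t-r)}$. With $x=a\sqrt{t}$, one has $X^2/2(t-r) \to a^2/2$, while the other two exponentials tend to $1$ pointwise in $y'$. Since $X+\frac{3}{2\sqrt{2}}\log t = a\sqrt{t}+o(\sqrt{t})$, the factor $1-\exp\{-2y'(X+\tfrac{3}{2\sqrt{2}}\log t)/(t-r)\}$ is asymptotic to $2ay'/\sqrt{t}$, and the prefactor $t^{3/2}/(x\sqrt{2\pi(t-r)})$ is asymptotic to $\sqrt{t}/(a\sqrt{2\pi})$. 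Their product is $y'\sqrt{2/\pi}$, so the integrand converges pointwise to
\[
e^{-\sqrt{2}Y}\,e^{-a^2/2}\,\sqrt{\tfrac{2}{\pi}}\,y'\,u(r,y'+\sqrt{2}r)\,e^{y'\sqrt{2}},
\]
whose integral over $y'\in(0,\infty)$ equals $K e^{-\sqrt{2}Y}e^{-a^2/2}$.

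For the dominating bound, I would use $1-e^{-z}\leq z$ together with $e^{-(y'-X)^2/2(t-r)}\leq 1$ to bound the integrand, multiplied by the prefactor, by
\[
e^{-\sqrt{2}Y}\cdot\frac{2\,t^{3/2}\,(X+\frac{3}{2\sqrt{2}}\log t)}{x\,(t-r)^{3/2}\sqrt{2\pi}}\cdot y'\,u(r,y'+\sqrt{2}r)\,e^{y'\sqrt{2}}.
\]
With $x=a\sqrt{t}$ and $a\in[a_1,a_2]$, the $t$-dependent coefficient is bounded as $t\to\infty$ by a constant depending only on $a_1$ and $r$. The remaining factor $y'\,u(r,y'+\sqrt{2}r)\,e^{y'\sqrt{2}}$ is integrable on $(0,\infty)$ by the hypothesis of the lemma (see also the proof of Proposition \ref{cr_our_setting}), so dominated convergence applies.

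The main obstacle, such as it is, is ensuring uniformity in $a\in[a_1,a_2]$: one needs $a_1>0$ so the prefactor $1/(a\sqrt{t})$ is well-behaved, and one must check that the convergences of $X^2/2(t-r)\to a^2/2$ and of the linearization of $1-e^{-(\cdots)}$ hold uniformly in $a$, which is immediate since the error terms are $O((\log t)/\sqrt{t})$ with constants depending only on $a_2$. Putting everything together yields the claimed limit and the uniformity statement.
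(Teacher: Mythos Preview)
Your proposal is correct and follows essentially the same approach as the paper: substitute the explicit formula \eqref{psi_no_m} for $\psi$, compute the pointwise limit of the integrand using $x=a\sqrt t$, and justify the interchange of limit and integral by dominated convergence against the integrable majorant $y'\,u(r,y'+\sqrt 2 r)\,e^{\sqrt 2 y'}$ (exactly the bound established via \eqref{dominated_one}--\eqref{dominated_five}). Your treatment is in fact slightly more explicit than the paper's on the domination step and on the uniformity in $a$ over compacta of $(0,\infty)$.
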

\begin{proof}
The proof is a simple computation: 
\beq \bea \label{bounded_conv_ind}
& \lim_{t\to \infty}  \frac{\ee^{\sqrt{2}x}}{x} t^{3/2}   \psi(r, t, Y+x +\sqrt{2}t) \\
& \stackrel{\eqref{psi_no_m}}{=} \ee^{-\sqrt{2}Y} \lim_{t\to \infty} \frac{t^{3/2}}{x \sqrt{2\pi(t-r)}}\int_0^\infty u(r,y'+\sqrt{2}r)e^{y'\sqrt{2}}e^{-\frac{(y'-x-Y)^2}{2(t-r)}}(1-e^{-2y'\frac{(x+Y+\frac{3}{2\sqrt{2}}\log t)}{t-r}})dy'\\
& = \ee^{-\sqrt{2}Y} \int_0^\infty u(r,y'+\sqrt{2}r)e^{y'\sqrt{2}}\lim_{t\to \infty}  \frac{t^{3/2}}{x \sqrt{2\pi(t-r)}}\left[e^{-\frac{(y'-x-Y)^2}{2(t-r)}}(1-e^{-2y'\frac{(x+Y+\frac{3}{2\sqrt{2}}\log t)}{t-r}})\right]dy',
\eea \eeq
the last step by dominated convergence (cfr. \eqref{dominated_one}-\eqref{dominated_five}). 
Using that $x = a\sqrt{t}$,
\beq 
\lim_{t\to \infty}  \frac{t^{3/2}}{x \sqrt{2\pi(t-r)}}\left[e^{-\frac{(y'-x-Y)^2}{2(t-r)}}(1-e^{-2y'\frac{(x+Y+\frac{3}{2\sqrt{2}}\log t)}{t-r}})\right] = \sqrt{\frac{2}{\pi}} y' \ee^{-a^2/2},  
\eeq
hence
\beq \label{bounded_conv_upshot}
\lim_{t\to \infty} \frac{\ee^{\sqrt{2}x}}{x} t^{3/2}   \psi(r, t, Y+x +\sqrt{2}t) = K \ee^{-\sqrt{2}Y}\ee^{-a^2/2}\ .
\eeq 
\end{proof}

\begin{proof}[Proof of Proposition \ref{iid}]
Let $a\in[-C_1,-C_2]$ and $b\in\R$. Set $x=a\sqrt{t}+b$.
Define for convenience
$$
\overline{\mathcal{E}}_t\defi\sum_{i}\delta_{x_i(t)-\sqrt{2}t}\ ,
$$
and $\max \overline{\mathcal{E}}_t\defi \max_i x_i(t)-\sqrt{2}t$.
We first claim that $  x + \max \overline{\mathcal{E}}_t$ conditionally on $\{ x + \max \overline{\mathcal{E}}_t >0\}$ weakly
converges to an exponential random variable, 
\beq \label{exponential_rv}
\lim_{t\to\infty}\PP\left[ x + \max \overline{\mathcal{E}}_t >X  ~ \Big| x + \max \overline{\mathcal{E}}_t >0\right] = \ee^{-\sqrt{2} X},
\eeq
for $X>0$ (and $0$ otherwise). Remark, in particular, that the limit does not depend on $x$. 

To see \eqref{exponential_rv}, we write the conditional probability as
\beq
\label{max_cond_survival_two}
\frac{\PP\left[ x + \max \overline{\mathcal{E}}_t >X \right]}{ \PP\left[ x + \max \overline{\mathcal{E}}_t >0\right]} \,.
\eeq
For $t$ large enough (and hence $-x$ large enough in the positive) we may apply the uniform bounds from Proposition \ref{bramson_fundamental_interpolation} in the form 
\beq \label{upper_psi_ind}
\PP\left[ \max_{k\leq n(t)} x_k(t) \geq X-x +\sqrt{2}t \right] \leq \gamma(r) \psi(r, t, X-x +\sqrt{2}t)
\eeq
and 
\beq \label{lower_psi_ind}
\PP\left[ \max_{k\leq n(t)} x_k(t) \geq -x +\sqrt{2}t \right] \geq \gamma^{-1}(r) \psi(r, t, -x +\sqrt{2}t)
\eeq
where $\psi$ is as in \eqref{psi_no_m} and the $u$ entering into its definition is solution to F-KPP with Heaviside initial conditions, and $r$ is large enough. Therefore, 
\beq \bea \label{max_cond_survival_three}
\gamma^{-2}(r) \frac{\psi(r, t, X-x +\sqrt{2}t)}{\psi(r, t, -x +\sqrt{2}t)} \leq \eqref{max_cond_survival_two} \leq \gamma^2(r) \frac{\psi(r, t, X-x +\sqrt{2}t)}{\psi(r, t, -x +\sqrt{2}t)}.
\eea \eeq
By Lemma \ref{not_a}, 
\beq \bea
\lim_{t\to \infty} \frac{\psi(r, t, X-x +\sqrt{2}t)}{\psi(r, t, -x+\sqrt{2}t)} = \ee^{-\sqrt{2} X}.
\eea\eeq
Taking the limit $t\to \infty$ first, and then $r\to \infty$ (and using that $\gamma(r) \downarrow 1$) we thus see that \eqref{max_cond_survival_three} implies 
\eqref{exponential_rv}. 

Second, we show that for any function $\phi$ that is continuous with compact support, the limit of 
$$
\E\left[\exp -\int \phi(x+z) \overline{\mathcal{E}}_t(dz) \Big| x + \max \overline{\mathcal{E}}_t>0\right]
$$
exists and is independent of $x$. It follows from the first part of the proof that the conditional process has a maximum almost surely. 
It is thus sufficient to consider the truncated Laplace functional, that is for 
$\delta>0$, 
\beq
\label{eqn: laplace cond trunc}
\E\left[\exp\left( -\int \phi(x+z) \overline{\mathcal{E}}_t(dz)\right)\1_{\{x+ \max \overline{\mathcal{E}}_t \leq \delta\}} \Big| x+ \max \overline{\mathcal{E}}_t>0\right]
\eeq
The above conditional expectation can be written as 
\beq \bea \label{laplace_ind_a_three}
& = \frac{\E\left[ \prod_{k=1}^{n(t)}  \exp\Big(-\phi(x+x_k(t)-\sqrt{2}t)\Big) \1_{\{x+ x_k(t)-\sqrt{2}t \leq \delta \}} \right]}{ \PP\left[x+  \max \overline{\mathcal{E}}_t >0 \right]} \\
& \qquad \qquad- \frac{\E\left[ \prod_{k=1}^{n(t)}  \exp\Big(-\phi(x+x_k(t)-\sqrt{2}t)\Big) \1_{\{x+x_k(t)-\sqrt{2}t \leq 0 \}} \right]}{\PP\left[ x+  \max \overline{\mathcal{E}}_t >0 \right]}\ .
\eea \eeq
Define
\[
\bea
 u_1(t, y ) &\defi 1-\E\left[ \prod_{k=1}^{n(t)} \ee^{-\phi(-y+x_k(t))} \1_{\{-y+ x_k(t)\leq 0\}} \right]\\
u_2(t, y) &\defi 1-\E\left[ \prod_{k=1}^{n(t)} \ee^{-\phi(-y+x_k(t))} \1_{\{-y+x_k(t) \leq \delta \}} \right]\\
u_3(t, y) &\defi  \PP\left[ -y+\max_k x_k(t) \leq 0 \right]\\
\eea
\]
so that 
\beq \bea \label{laplace_ind_a_four}
\eqref{laplace_ind_a_three} 
& = \frac{u_2(t, -x +\sqrt{2} t)}{u_3(t, -x+\sqrt{2} t)} - \frac{u_1(t, -x+\sqrt{2} t)}{u_3(t, -x+\sqrt{2} t)}.
\eea \eeq

Remark that the functions $u_1, u_2$ and $u_3$, all solve the F-KPP equation \eqref{kpp_one_minus} with initial conditions
\beq \bea
\label{eqn: u_i initial}
u_1(0, y) &= 1-\ee^{-\phi(-y)} \1_{\{-y \leq 0\}}, \\
u_2(0, y) &= 1-\ee^{-\phi(-y)} \1_{\{-y \leq \delta\}}, \\
u_3(0, y) &= 1- \1_{\{-y \leq 0\}}.
\eea \eeq
They also satisfy the assumptions of Theorem \ref{bramson_fundamental_convergence} and Proposition \ref{bramson_fundamental_interpolation}.

Let $\psi_i$ be as in \eqref{psi_no_m} with $u$ replaced by the appropriate 
$u_i$, $i=1,2,3$.
By Proposition \ref{bramson_fundamental_interpolation}, 
\beq \bea \label{upshot_iid}
& \lim_{t\to \infty} \frac{u_2(t, -x +\sqrt{2} t)}{u_3(t, -x+\sqrt{2} t)} -
 \frac{u_1(t, -x+\sqrt{2} t)}{u_3(t, -x+\sqrt{2} t)}  \\
& \quad =  \lim_{r\to \infty} \lim_{t\to \infty} 
\left\{ \frac{\psi_2(r,t, -x +\sqrt{2} t)}{\psi_3(r,t, -x+\sqrt{2} t)}
 \right\} - \lim_{r\to \infty} \lim_{t\to \infty}
 \left\{ \frac{\psi_1(r,t, -x+\sqrt{2} t)}
{\psi_3(r, t, -x+\sqrt{2} t)} \right\}.
\eea \eeq
By Lemma \ref{not_a}, the above limits exist and do not depend on $x$. 
This shows the existence of \eqref{eqn: laplace cond trunc}.
It remains to prove that this limit is non-zero for a non-trivial $\phi$, thereby showing the existence and local finiteness of the conditional point process.
To see this, note that, by Proposition \ref{cr_our_setting}, the limit of \eqref{eqn: laplace cond trunc} equals
$$
\frac{C_2-C_1}{C_3}
$$
where $C_i=\lim_{t\to\infty}\int_{-\infty}^0 u_i(t,y'+\sqrt{2}t)(-y'e^{-\sqrt{2}y'}dy')$ for $u_i$ as above.
Note that $0<C_3<\infty$, since by the representation of Theorem \ref{teor_existence}
$$
\lim_{t\to\infty}\PP(\max x_i(t)-m(t) \leq z)= \E \exp -C_3 Z e^{-\sqrt{2}z},
$$
and this probability is non-trivial. Now suppose $C_1=C_2$.
Then by Theorem \ref{teor_existence} again, this would entail
$$
\lim_{t\to\infty}\E\left[ \left(\exp-\int \phi(x) \mathcal{E}_t(dx) \right)\1_{\{\max \mathcal{E}_t<\delta\}}\right]=\lim_{t\to\infty}\E\left[ \left(\exp-\int \phi(x) \mathcal{E}_t(dx) \right)\1_{\{\max \mathcal{E}_t<0\}}\right]\ ,
$$
where $\mathcal{E}_t=\sum_i \delta_{x_i(t)-m(t)}$ and $\max \mathcal{E}_t=\max_i x_i(t)-m(t)$. Thus,
$$
\lim_{t\to\infty}\E\left[ \left(\exp-\int \phi(x) \mathcal{E}_t(dx) \right)\1_{\{0<\max \mathcal{E}_t<\delta\}}\right]= 0\ .
$$
But this is impossible since the maximum has positive probability of occurrence in $[0,\delta]$ for any $\delta$ and
the process $\lim_{t\to\infty}\mathcal{E}_t$ is locally finite. This concludes the proof of the Proposition.
\end{proof}

Define the gap process at time $t$
\begin{equation}
\mathcal D_t\defi \sum_i \delta_{x_i(t)-\max_j x_j(t)}\ .
\label{eqn: Delta}
\end{equation}
%where the law of $\Delta_t$ is induced by the conditional law of $\overline{\mathcal{E}}_t$ given $y+\max \mathcal{E}_t>0$.
Let us write $\overline{\mathcal{E}}$ for the point process obtained in Proposition \ref{iid} from the limit of the conditional
law of $\overline{\mathcal{E}}_t$ given $\max \overline{\mathcal{E}}_t>0$. 
We denote by $\max \overline{\mathcal{E}}$ the maximum of $\overline{\mathcal{E}}$, and
by $\mathcal D$ the process of the gaps of $\overline{\mathcal{E}}$, that is the process $\overline{\mathcal{E}}$ shifted back by  $\max \overline{\mathcal{E}}$.
The following corollary is the fundamental result showing that $\mathcal D$ is the limit of the conditioned process $\mathcal D_t$, and, perhaps surprisingly, 
the process of the gaps in the limit is independent of the location of the maximum.
\begin{cor}
\label{cor: gaps}
Let $x=a\sqrt{t}$, $a<0$.
In the limit $t\to\infty$, the random variables $\mathcal D_t$ and $x+\max \overline{\mathcal{E}}$ 
are conditionally independent on the event $x+\max \overline{\mathcal{E}}>b$ for any $b\in\R$.
More precisely, for any bounded continuous function $f,h$ and $\phi\in \mathcal{C}_c(\R)$,
$$
\bea
&\lim_{t\to\infty}\E \left[ f\left(\int \phi(z) \mathcal D_t(dz)\right) h(x+\max \overline{\mathcal{E}}_t) \Big| x+\max \overline{\mathcal{E}}_t >b\right]\\
&\qquad\qquad \qquad=\E\left[ f\left(\int \phi(z) \mathcal D(dz)\right)\right] \int_{b}^\infty h(y) \frac{\sqrt{2}e^{-\sqrt{2}y}dy}{e^{-\sqrt{2}b}}\ .
\eea
$$
Moreover, convergence is uniform in $x=a\sqrt{t}$ for $a$ in a compact set. 
\end{cor}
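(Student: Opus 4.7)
The plan is to reduce the joint convergence to two applications of Proposition \ref{iid}, linked via a multiplicative decomposition on nested level sets. First I would reduce the problem to the case where $h$ is an indicator of a half-line of the form $\mathbf{1}_{(c,\infty)}$ with $c \geq b$. The right-hand side of the corollary integrates $h$ against an absolutely continuous (exponential) measure on $[b,\infty)$, so a standard monotone-class / dominated-convergence argument reduces the general case to such indicators once convergence is established on them.

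Assuming now $h(y) = \mathbf 1_{y>c}$ with $c \geq b$, I would split the conditional expectation as
\[
\E\!\left[f\!\left(\textstyle\int\phi\,d\mathcal D_t\right)\mathbf 1_{\{x+\max\overline{\mathcal E}_t>c\}}\,\Big|\,x+\max\overline{\mathcal E}_t>b\right]
= \E\!\left[f\!\left(\textstyle\int\phi\,d\mathcal D_t\right)\,\Big|\,x+\max\overline{\mathcal E}_t>c\right]\cdot\frac{\PP(x+\max\overline{\mathcal E}_t>c)}{\PP(x+\max\overline{\mathcal E}_t>b)}.
\]
For the first factor, I would observe that $\{x+\max>c\}$ coincides with $\{(x-c)+\max>0\}$, and that $x-c$ still has the form $a\sqrt t+O(1)$ with $a<0$ allowed by Proposition \ref{iid}. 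Hence the conditional law of $\sum_k \delta_{(x-c)+x_k(t)-\sqrt 2 t}$ given that event converges to $\overline{\mathcal E}$; since the gap map is invariant under a uniform shift, $\mathcal D_t$ under this conditioning converges in law to $\mathcal D$, and the first factor tends to $\E[f(\int\phi\,d\mathcal D)]$. The second factor is itself a conditional probability $\PP(x+\max>c\mid x+\max>b)$, which by the exponential limit law for $x+\max\overline{\mathcal E}_t$ from Proposition \ref{iid} (applied with $x$ replaced by $x-b$) converges to $e^{-\sqrt 2(c-b)}$. Multiplying and checking that $\E[f(\int\phi\,d\mathcal D)]\cdot e^{-\sqrt 2(c-b)}$ agrees with the right-hand side of the corollary specialised to $h=\mathbf 1_{(c,\infty)}$ then completes the proof on indicators, with uniformity in $a$ over compact subsets of $(-\infty,0)$ inherited from the uniformity in Proposition \ref{iid}.

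The main obstacle will be conceptual rather than computational: one must invoke that the limiting conditional law of the gap process given $\{(x-c)+\max>0\}$ is the \emph{same} $\mathcal D$ regardless of the shift level $c$, which is the nontrivial content of the ``$x$-independence'' clause in Proposition \ref{iid}. Once this universality of the cluster shape is accepted, the multiplicative decomposition above forces the asymptotic factorization, and hence the conditional independence asserted in the corollary. A secondary nuisance will be the passage from indicators back to bounded continuous $h$; this should be routine because the limiting distribution of the maximum is absolutely continuous on $[b,\infty)$, so the set of discontinuities of $y\mapsto\mathbf 1_{y>c}$ has zero mass under the limit measure for each $c$.
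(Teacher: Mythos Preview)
Your approach is essentially identical to the paper's: reduce $h$ to indicators $\mathbf 1_{(c,\infty)}$ with $c\geq b$, factor the conditional expectation multiplicatively, and invoke Proposition~\ref{iid} separately on each factor. The paper does exactly this.

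There is one point where your argument is thinner than the paper's. You write that, since Proposition~\ref{iid} gives convergence of the shifted process $\sum_k\delta_{(x-c)+x_k(t)-\sqrt 2 t}$ under the conditioning, and ``the gap map is invariant under a uniform shift,'' it follows that $\mathcal D_t$ converges to $\mathcal D$. Shift-invariance alone does not do this: you need that the map $\mu\mapsto T_{-\max\mu}\mu$ is continuous (in the vague topology, at the limit point), i.e.\ that vague convergence of the point process together with convergence of its maximum implies convergence of the gap process. This is not automatic, and the paper isolates precisely this step as Lemma~\ref{lem: continuity}, which shows that joint convergence of $(\mu_t,X_t)$ implies convergence of $\int\phi(y+X_t)\mu_t(dy)$. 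You should either prove this continuity statement or cite it; without it, the passage from Proposition~\ref{iid} to ``$\mathcal D_t\Rightarrow\mathcal D$ under the conditioning'' is a gap. Everything else in your plan matches the paper and is correct.
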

\begin{proof}
By standard approximation, it suffices to establish the result for $h(y)=\1_{\{y>b'\}}$ for $b'>b$.
By the property of conditioning and since $b'>b$
$$
\bea
&\E \left[ f\left(\int \phi(z) \mathcal D_t(dz)\right) 
\1_{\{x+\max\mathcal{E}_t>b'\}} \Big| x+\max \overline{\mathcal{E}}_t >b\right]\\
%&\qquad=\frac{\E \left[ f\left(\int \phi d\Delta_t\right) ; y+\max \overline{\mathcal{E}}_t>\delta]}{\PP\left[ y+\max \overline{\mathcal{E}}_t >b\right]}\\
&\qquad=\E \left[ f\left(\int \phi(z) \mathcal D_t(dz)\right) \Big| x-b'+\max \overline{\mathcal{E}}_t>0\right]\frac{\PP\left[ x-b+\max \overline{\mathcal{E}}_t >b'-b\right]}{\PP\left[ x-b+\max \overline{\mathcal{E}}_t >0\right]}\ .
\eea
$$
The conclusion will follow from Proposition \ref{iid} by taking the limit $t\to\infty$, once it is shown that convergence of $(\overline{\mathcal{E}}_t, y+ \max \overline{\mathcal{E}}_t)$ under the conditional law implies convergence of the gap process $\mathcal{D}_t$.
This is a general continuity result which is done in the next lemma.
\end{proof}

\begin{lem}
\label{lem: continuity}
Let $(\mu_t,X_t)$ be a sequence of random variables on $\mathcal{M}\times \R$ that converges to $(\mu,X)$ in the sense that
for any bounded continuous function $f,h$ on $\R$ and any $\phi\in\mathcal{C}_c(\R)$
$$
\E\left[ f\left(\int \phi d\mu_t\right)h(X_t)\right]\to \E\left[ f\left(\int \phi d\mu\right)h(X)\right]\ .
$$
Then for any $\phi\in\mathcal{C}_c(\R)$ and $g:\R\to\R$, bounded continuous,
$$
\E\left[g\left(\int \phi(y+X_t)\mu_t(dy)\right)\right]\to \E\left[g\left(\int \phi(y+X)\mu(dy)\right)\right] \ .
$$
\end{lem}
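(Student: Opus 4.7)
The plan is to recognize $(\mu,X) \mapsto \int \phi(y+X)\mu(dy)$ as a continuous functional on the Polish space $\mathcal{M}\times\R$ and then invoke the continuous mapping theorem. First, the hypothesis --- convergence of $\E[f(\int\phi\,\dd\mu_t) h(X_t)]$ for all bounded continuous $f,h$ and all $\phi\in\mathcal{C}_c(\R)$ --- amounts to joint weak convergence $(\mu_t,X_t)\Rightarrow(\mu,X)$ on $\mathcal{M}\times\R$ equipped with the product of the vague and Euclidean topologies, since the algebra of functions of the form $(\mu,X)\mapsto f(\int\phi\,\dd\mu)h(X)$ is convergence-determining on this space.

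Next I would prove continuity of $F(\mu,X) \defi \int \phi(y+X)\mu(\dd y)$ at any $(\mu,X)$. Given $\mu_n\to\mu$ vaguely and $X_n\to X$, I would split
\[
F(\mu_n,X_n)-F(\mu,X) = \int\bigl[\phi(y+X_n)-\phi(y+X)\bigr]\mu_n(\dd y) + \left[\int\phi(y+X)\mu_n(\dd y) - \int\phi(y+X)\mu(\dd y)\right].
\]
Because $y\mapsto\phi(y+X)$ belongs to $\mathcal{C}_c(\R)$, the bracketed difference tends to zero by vague convergence. For the remaining term, $\phi$ is uniformly continuous (being continuous with compact support), so $\|\phi(\cdot+X_n)-\phi(\cdot+X)\|_\infty\to 0$. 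For all large $n$ the supports of $\phi(\cdot+X_n)$ are contained in a fixed compact set $K$, and choosing a bump $\chi\in\mathcal{C}_c(\R)$ with $\chi\geq\1_K$ yields $\mu_n(K)\leq\int\chi\,\dd\mu_n\to\int\chi\,\dd\mu<\infty$, so $\mu_n(K)$ is bounded. Hence the first term also vanishes, establishing continuity of $F$.

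Composing with the bounded continuous function $g$, the continuous mapping theorem applied to the joint weak convergence $(\mu_t,X_t)\Rightarrow(\mu,X)$ then yields $\E[g(F(\mu_t,X_t))]\to \E[g(F(\mu,X))]$, which is the desired conclusion.

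The main technical obstacle is the control of $\mu_n$ on the (slowly shifting) family of compact supports of the functions $\phi(\cdot+X_n)$: vague convergence provides no a priori bound on $\mu_n(K)$ for a generic compact $K$, so the domination trick through an auxiliary $\chi\in\mathcal{C}_c(\R)$ with $\chi\geq\1_K$ is essential. The upgrade from the product-form hypothesis to genuine joint weak convergence on $\mathcal{M}\times\R$ is standard but worth stating explicitly, since convergence of point processes is frequently formulated only through one-dimensional marginal test functionals.
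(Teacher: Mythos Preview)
Your argument is correct and proceeds along a genuinely different, more abstract route than the paper's. The paper gives a bare-hands proof: it reduces to Lipschitz $g$, uses tightness of $(X_t)$ to localize to a compact interval $K_\varepsilon$, partitions $K_\varepsilon$ into finitely many small subintervals $I_j$ with midpoints $\bar x_j$, and on each $I_j$ replaces the random shift $X_t$ by the deterministic shift $\bar x_j$; the replacement error is controlled via uniform continuity of $\phi$ and a bound on $\E[\mu_t(C)]$ for one fixed compact $C$ (essentially your $\chi$-domination trick, in expectation form), and the discretized pieces $\E[f(T_{\bar x_j}\mu_t(\phi));X_t\in I_j]$ are handled directly by the stated hypothesis. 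Your approach instead upgrades the hypothesis to full joint weak convergence $(\mu_t,X_t)\Rightarrow(\mu,X)$ on $\mathcal{M}\times\R$, checks that $(\mu,X)\mapsto\int\phi(\cdot+X)\,d\mu$ is continuous, and invokes the continuous mapping theorem. This is cleaner and more reusable. One caveat: your phrase ``the algebra of functions of the form $f(\int\phi\,d\mu)h(X)$ is convergence-determining'' is not quite the right justification, since the hypothesis only gives convergence on the \emph{generators}, not on the algebra they span. The honest argument is the one you allude to at the end: marginal tightness (from $f\equiv 1$ and $h\equiv 1$) gives joint tightness on the product space, and then one must check that the generators form a \emph{separating} class for laws on $\mathcal{M}\times\R$, which requires a Cram\'er--Wold step to pass from a single $\phi$ to finite tuples $(\int\phi_1\,d\mu,\ldots,\int\phi_n\,d\mu,X)$. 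The paper's discretization avoids this detour at the cost of a longer computation.
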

\begin{proof}
Let $f:\R\to\R$ be a bounded continuous function. 
Introduce the notation
$$
T_x\mu(\phi)\defi \int \phi(y+x) \mu(dx)\ .
$$
We need to show that for $t$ large enough
\beq
\label{eqn: diff}
\Big|\E \left[ f\left(T_{X_t}\mu_t(\phi)\right)\right]-\E \left[ f\left(T_{X}\mu(\phi)\right)\right]\Big|
\eeq
is smaller than $\vare$. By standard approximations, it is enough to suppose $f$ is Lipschitz, whose constant we assume to be $1$ for simplicity.

Since the random variables $(X_t)$ are tight by assumption, there exist $t(\vare)$ large enough and $K_\vare$, an interval of $\R$, such that
$$
\eqref{eqn: diff}\leq \Big|\E \left[ f\left(T_{X_t}\mu_t(\phi)\right);X_t\in K_\vare\right]-\E \left[ f\left(T_{X}\mu(\phi)\right);X\in K_\vare\right]\Big| +\vare\ .
$$
Now divide $K_\vare$ into $N$ intervals $I_j$ of equal length. Write $\bar{x}_j$ for the midpoint of $I_j$.
For each of these intervals, one has
\beq
\label{eqn: approx X_t}
\E \left[ f\left(T_{X_t}\mu_t(\phi)\right); X_t\in I_j\right]= \E \left[ f\left(T_{\bar{x}_j}\mu_t(\phi)\right); X_t\in I_j\right]
+ \mathcal{R}(t,j)
\eeq
for
$$
\mathcal{R}(t,j)\leq \E \left[ |f\left(T_{X_t}\mu_t(\phi)\right)- f\left(T_{\bar{x}_j}\mu_t(\phi)\right)| ; X_t\in I_j\right]
$$
Since $f$ is Lipschitz, the right-hand side is smaller than
\beq
\label{eqn: lip}
 \E\left[|T_{X_t}\mu_t(\phi)- T_{\bar{x}_j}\mu_t(\phi)| ; X_t\in I_j\right] \ .
\eeq
Moreover
$$
|T_{X_t}\mu_t(\phi)- T_{\bar{x}_j}\mu_t(\phi)|=\int|\phi(y-X_t)-\phi(y-\bar{x}_j)| \mu_t(dy)\ .
$$
Note that there exists a compact C, independently of $t$ and $j$ so that $|\phi(y-X_t)-\phi(y-\bar{x}_j)|=0$ for $y\notin C$.
(It suffices to take $C$ so that it contains all the translates $\text{supp} \phi +k$, $k\in K_\vare$). 
By taking $N$ large enough, $|y-X_t- (y-\bar{x}_j)|=|\bar{x}_j-X_t|<\delta_\phi$ for the appropriate $\delta_\phi$
making $|\phi(y-X_t)-\phi(y-\bar{x}_j)|<\vare$, uniformly on $y\in C$. 
Hence, \eqref{eqn: lip} is smaller than
$$
\vare \E[\mu_t(C); X_t\in I_j]
$$
The summation over $j$ is thus smaller than $\vare \E[\mu_t(C)]$. 
By the convergence of $(\mu_t)$, this can be made smaller for $t$ large enough.

The same approximation scheme for $(\mu,X)$ yields
\beq
\label{eqn: approx X}
\E \left[ f\left(T_{X}\mu(\phi)\right); X_t\in I_j\right]= \E \left[ f\left(T_{\bar{x}_j}\mu(\phi)\right); X\in I_j\right]
+ \mathcal{R}(j)
\eeq
where $\sum_j \mathcal{R}(j)\leq \vare \E[\mu(C)]$.
Therefore \eqref{eqn: diff} will hold provided that the difference of the first terms of the right-hand side of \eqref{eqn: approx X_t}
and of  \eqref{eqn: approx X} is small for $t$ large enough and $N$ fixed. But this is guaranteed by the hypotheses 
on the convergence of $(\mu_t,X_t)$.
\end{proof}

\subsection{Characterisation of the extremal process}

\begin{proof}[Proof of Theorem \ref{teor main}]
It suffices to show that for $\phi: \R \to \R_+$ continuous with compact support the Laplace functional of the extremal process of branching 
Brownian motion satisfies  
\beq \label{equality_laplace_t}
\lim_{t\to \infty} \Psi_t(\phi) = \E \exp\left(-CZ\int_\R \E[1- e^{-\int \phi(y+z) \mathcal D(dz)}] \sqrt{2} e^{-\sqrt{2}y} dy\right)
\eeq 
for the point process $\mathcal D$ of Corollary \ref{cor: gaps}. 

Now, by Theorem \ref{main_theorem},
\beq \bea \label{start}
& \lim_{t\to \infty} \Psi_t(\phi) = \lim_{t\to\infty} \E\left[\exp -\sum_{i,k}\phi(\eta_i +\frac{1}{\sqrt{2}}\log Z+x_k^{(i)}(t)-\sqrt{2}t ) \right]. 
%&  \qquad \qquad =\E \exp\left(-CZ\int_\R \E[1- e^{-\int \phi ~ d\mathcal D}]e^{-\sqrt{2}y})dy\right). \\
\eea \eeq 
Using the form for the Laplace transform of a Poisson process we have for the r.h.s. above
\beq \bea \label{main_one} 
&\lim_{t\to\infty} \E\left[\exp -\sum_{i,k}\phi(\eta_i +\frac{1}{\sqrt{2}}\log Z+x_k^{(i)}(t)-\sqrt{2}t ) \right] \\
& \qquad =\E\exp\left(-Z\lim_{t\to\infty}\int_{-\infty}^0 \E\left[1- \exp-\int \phi(x+y)\overline{\mathcal{E}}_t(dx)\right]\sqrt{\frac{2}{\pi}}(-ye^{-\sqrt{2}y})dy\right). \\
\eea \eeq
Let $\mathcal D_t$ as in \eqref{eqn: Delta}. The integral of the right-hand side above can be written as
$$
\lim_{t\to\infty}\int_{-\infty}^0  \E\left[f\left(\int \left\{T_{y+\max \overline{\mathcal{E}}_t}\phi(z)\right\} \mathcal D_t(dz) \right)\right]\sqrt{\frac{2}{\pi}}(-ye^{-\sqrt{2}y})dy
$$
for the bounded (on $[0,\infty)$) continuous function $f(x)=1-e^{-x}$, and where $T_x\phi(y)=\phi(y+x)$.
By Proposition \ref{squareroot_prop}, there exist $C_1$ and $C_2$ such that
\beq
\label{eqn: int approx}
\bea
& \int_{-\infty}^0 \E\left[f\left(\int \left\{T_{y+\max \overline{\mathcal{E}}_t}\phi(z)\right\} \mathcal D_t(dz) \right)\right]\sqrt{\frac{2}{\pi}}(-ye^{-\sqrt{2}y})dy \\
& \qquad = \Omega_t(C_1, C_2) + \int_{-C_2 \sqrt{t}}^{-C_1 \sqrt{t}} \E\left[f\left(\int \left\{T_{y+\max \overline{\mathcal{E}}_t}\phi(z)\right\} \mathcal D_t(dz) \right)\right]\sqrt{\frac{2}{\pi}}(-ye^{-\sqrt{2}y})dy, \\
\eea
\eeq
where the error term satisfies $\lim_{C_1\downarrow 0, C_2 \uparrow \infty} \sup_{t\geq t_0}\, \Omega_t(C_1, C_2) = 0.$
Introducing a conditioning on the event $y+\max \overline{\mathcal{E}}_t> m_\phi$,
the term in the integral becomes
\beq
\label{eqn: integrand}
\bea
& \E\left[f\left(\int \left\{T_{y+\max \overline{\mathcal{E}}_t}\phi(z)\right\} \mathcal D_t(dz) \right)\right]= \\
&\qquad \E\left[f\left(\int \left\{T_{y+\max \overline{\mathcal{E}}_t}\phi(z)\right\} \mathcal D_t(dz) \right) \Big| y+\max \overline{\mathcal{E}}_t> m_\phi\right] \PP\left[y+\max \overline{\mathcal{E}}_t> m_\phi\right]\ .
 \eea
\eeq
By Corollary \ref{cor: gaps}, the conditional law of the pair $\mathcal D_t, y+\max \overline{\mathcal{E}}_t$ {\it given} $\{y+\max \overline{\mathcal{E}}_t> m_\phi\}$ exists in the limit.
Moreover the convergence is uniform in $y \in [-C_1 \sqrt{t}, -C_2 \sqrt{t}]$.
By Lemma \ref{lem: continuity}, the convergence applies to the random variable $\int \left\{T_{y+\max \overline{\mathcal{E}}_t}\phi(z)\right\} \mathcal D_t(dz)$. 
Therefore
\beq
\label{eqn: convergence int}
\bea
&\lim_{t\to\infty}  \E\left[f\left(\int \left\{T_{y+\max \overline{\mathcal{E}}_t}\phi(z)\right\} \mathcal D_t(dz) \right) \Big| y+\max \overline{\mathcal{E}}_t> m_\phi\right]\\
&\qquad\qquad
=\int_{m_\phi}^\infty\E\left[f\left(\int (T_y\phi(z) \mathcal D(dz) \right)\right] \frac{\sqrt{2}e^{-\sqrt{2}y}dy}{e^{-\sqrt{2}m_\phi}}
\eea
\eeq
On the other hand,
\beq
\label{eqn: max}
\int_{-C_2 \sqrt{t}}^{-C_1 \sqrt{t}}  \PP\left[y+\max \overline{\mathcal{E}}_t> m_\phi\right] \sqrt{\frac{2}{\pi}}(-ye^{-\sqrt{2}y})dy= C e^{-\sqrt{2}m_\phi}+\Omega_t(C_1, C_2)
\eeq
by Lemma \ref{lem: max convergence} and by the same approximation as in \eqref{eqn: int approx}.

Combining \eqref{eqn: max}, \eqref{eqn: convergence int} and \eqref{eqn: integrand} gives that \eqref{main_one} converges  to 
\[
 \E \exp\left(-CZ\int_\R \E[1- e^{-\int \phi(y+z) \mathcal D(dz)}]\sqrt{2}e^{-\sqrt{2}y} dy\right) \, ,
\]
which is by \eqref{start} also the limiting Laplace transform of the extremal process of branching Brownian motion: this shows \eqref{equality_laplace_t}
and thus concludes the proof of Theorem \ref{teor main}. 

\end{proof}

\end{document}